\documentclass[12pt,a4paper]{amsart}
\usepackage[english]{babel}
\usepackage{color}
\usepackage[T1]{fontenc}
\usepackage{amsmath}
\usepackage{amscd}
\usepackage{amstext}
\usepackage{amsbsy}
\usepackage{amsopn}
\usepackage{amsthm}
\usepackage{amsxtra}
\usepackage{upref}
\usepackage{amsfonts}
\usepackage{amssymb}
\usepackage{mathrsfs}
\usepackage{euscript}
\usepackage{array}
\usepackage{stmaryrd}
\usepackage{verbatim}

\theoremstyle{plain}
      \newtheorem{theorem}{Theorem}
      \newtheorem{lemma}[theorem]{Lemma}
      \newtheorem{corollary}[theorem]{Corollary}
      \newtheorem{proposition}[theorem]{Proposition}
     
      \theoremstyle{definition}

     \theoremstyle{remark}
     \newtheorem{remark}[theorem]{Remark}
     \theoremstyle{Fact}
     
     \theoremstyle{Example}
     
\theoremstyle{notation}

\newcounter{Step}
\setcounter{Step}{0}
\newenvironment{step}[0]{\bigskip\addtocounter{Step}{1}\noindent\textbf{Step \theStep :} }{\
  \begin{flushright} \end{flushright}}

\def\N{\mbox{I\hspace{-.15em}N} }
\def\R{\mbox{I\hspace{-.15em}R} }
\def\Q{\mbox{l\hspace{-.47em}Q} }
\def\C{\hspace{.17em}\mbox{l\hspace{-.47em}C} }

\def\Z{\mbox{Z\hspace{-.30em}Z} }

\def\ot{\otimes}

\def\E{\mathscr{E}}

\setlength{\textwidth}{16,5cm}
\setlength{\hoffset}{-1,5cm}

\begin{document}

\title[Model theories for von Neumann algebras]{Continuous model theories for von Neumann algebras}
\author[Y. Dabrowski]{Yoann Dabrowski}\address{ 
Universit\'e de Lyon\\
Universit\'e Lyon 1\\
Institut Camille Jordan\\
43 blvd. du 11 novembre 1918\\
F-69622 Villeurbanne cedex\\
France} 
\email{dabrowski@math.univ-lyon1.fr}
\thanks{Research partially supported by ANR grant NEUMANN}

\subjclass[2000]{46L10, 03C20, 03C98}
\keywords{Continuous model theory, von Neumann algebras, modular theory}
\date{}
\maketitle

\begin{abstract}
We axiomatize in (first order finitary) continuous logic for metric structures $\sigma$-finite $W^*$-probability spaces and preduals of von Neumann algebras jointly with a weak-* dense $C^*$-algebra of its dual. This corresponds to the Ocneanu ultrapower and the Groh ultrapower of ($\sigma$-finite in the first case) von Neumann algebras. We give various axiomatizability results corresponding to recent results of Ando and Haagerup including axiomatizability of $III_\lambda$ factors for $0<\lambda\leq 1$ fixed and their preduals. We also strengthen the concrete Groh theory to an axiomatization result for preduals of von Neumann algebras in the language of tracial matrix-ordered operator spaces, a natural language for preduals of dual operator systems.   We give an application to the isomorphism of ultrapowers of factors of type $III$ and $II_\infty$ for different ultrafilters.
\end{abstract}

\section*{Introduction}

The model theory of metric structures (see \cite{BenYBHU}) was recently applied to
analyze ultrapowers of $C^*$-algebras and tracial von Neumann algebras \cite{FarahI,FarahII,FarahIII}. From an operator algebraic viewpoint, the relations between the various ultraproducts of von Neumann algebras (or $\sigma$-finite von Neumann algebras) was recently clarified in \cite{HaagerupAndo}. It is thus expected that some continuous model theory should enable to study those ultraproducts, beyond the tracial case. 

This is the goal of this paper to give axiomatizations in continuous model theory of various classes of von Neumann algebras and recover as model theoretic ultraproducts the two main ultrapowers : the Ocneanu ultrapower $(M,\varphi)^\omega$ of a $\sigma$-finite von Neumann algebra jointly with a faithful normal state $\varphi$ (as usual, we will call the pair $(M,\varphi)$  a $\sigma$-finite $W^*$-probability space) and Groh's ultrapower $\prod^\omega M$ of a von Neumann algebra $M$. In the second case, our model theory will rather be a model theory of the class of preduals of von Neumann algebras, giving an axiomatization of these preduals in continuous model theory. This is not surprising since the Groh ultraproduct is by definition the dual of the metric ultraproduct of preduals. All our axiomatizations will be in the continuous model theory setting for operator algebras from \cite{FarahII}, a multidomain variant of the first order (metric) continuous logic from \cite{BenYBHU}. We want to mention that Ilijas Farah and Bradd Hart checked in an unpublished work that general von Neumann algebras form a ``compact abstract theory" in the sense of \cite{BenYCAT}. This is crucial for our purposes that we restrict to $\sigma$-finite von Neumann algebras where a faithful normal state is available to get an axiomatization in the better behaved metric setting with more convenient syntactic counterpart of the semantic. However, for preduals of von Neumann algebras, we do axiomatize without any $\sigma$-finiteness assumption (a property that wouldn't be axiomatizable in our language for preduals anyway.)

Let us point out that our axiomatizations will be often explicit, but sometimes, as for preduals of von Neumann algebras (theorem \ref{GrohThPredual}), we will obtain the existence of an axiomatization in some explicit language by using a standard model-theoretic result  \cite[Prop 5.14]{BenYBHU} and proving only stability by ultraproducts and ultraroots of a class of models. In this case, the stability by ultraproducts for ultrafilters on $\N$ is always contained in \cite{HaagerupAndo} (sometimes with another language) but we give a model theoretic proof of the general ultraproduct case. This enables us to prove axiomatization results for the natural classes found to be stable by countable ultraproducts in \cite{HaagerupAndo}, such as $III_\lambda$ factors for a fixed $0<\lambda\leq 1$. Note that since the model theoretic result on axiomatizability is based on Keisler-Shelah theorem characterizing elementary equivalence, considering ultraproducts for ultrafilters on $\N$ is a priori (in absence of the continuum hypothesis) not enough  even in the separable case (there are structure for which ultraproducts for ultrafilters on $\N$ are known to be not enough to characterize elementary equivalence, at least consistently with ZFC, see \cite{Shelah} and \cite[Question after Rmk 4.2]{FarahIII}). Our extra-work  with uncountable ultraproducts is thus necessary to use the available model theoretic results. 

We also want to emphasize that, even though it is inspired from \cite{HaagerupAndo}, we give an alternative construction of the Ocneanu ultraproduct that does not use the relation to the Groh ultraproduct to prove we have a von Neumann algebra structure. At the end, we use their full result only to identify our ultraproduct with the usual Ocneaunu ultraproduct.

We have several motivations behind our study. First, even for the study of tracial von Neumann algebras $M$, various $II_\infty$ von Neumann algebras appear naturally, not only $M\otimes B(H)$ but also basic constructions $\langle M,e_B\rangle$  for subfactors $B\subset M$ in the infinite index case. Our theorem \ref{MoreAxiom} contains nice axiomatizability results for this  and other classes that we think will be important for the finite case since $(M\otimes B(H))^\omega\simeq M^\omega\otimes B(H)$ for Ocneanu ultrapower and the theory of $M\otimes B(H)$ with some way of identifying $B(H)$ (axiomatized by $T_{\sigma W^*geom}$ in theorem \ref{MoreAxiom}) is thus an alternative theory for $M$ (not only up to compression since the matrix unit put in the theory recovers $M$ in $M\otimes B(H)$). This theory should deserve more investigation. We also give a related theory of $B(H)$, with a matrix unit as constants, having quantifier elimination.

Of course, as a second motivation, having a model theoretic axiomatization enables to use interesting continuous model theoretic tools to study ultraproducts. Let us give a first consequence in the next theorem for isomorphism of factorial ultraproducts and leave for further investigation the study of stability of ($\sigma$-finite) von Neumann algebras parallel to \cite{FarahI}.

\begin{theorem}\label{stability} Let $M$ a von Neumann algebra with separable predual and $\varphi$ a faithful normal state on $M$.
\begin{enumerate}
\item If the Continuum Hypothesis holds, then for any nonprincipal ultrafilters  $\mathcal{U},\mathcal{V}$ on $\N$, we have isomorphisms  of the Groh ultrapowers  $\prod^\mathcal{U}M\simeq \prod^\mathcal{V}M$ and the Ocneaunu ultrapowers $(M,\varphi)^\mathcal{U}\simeq(M,\varphi)^\mathcal{V}$.
\item If the Continuum Hypothesis fails and $M$ is a factor which is not of type $III_0$, then $M$  is not of type $I$ if and only if there exist nonprincipal ultrafilters $\mathcal{U},\mathcal{V}$  on $\N$ such that $(M,\varphi)^\mathcal{U}\not\simeq(M,\varphi)^\mathcal{V}$ as von Neumann algebras.
\end{enumerate}
\end{theorem}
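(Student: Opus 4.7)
The plan is to leverage the axiomatization results proved earlier in the paper to reduce the statement to standard continuous-model-theory tools. The identification of the Ocneanu ultrapower $(M,\varphi)^\mathcal{U}$ with the continuous-logic ultrapower of $(M,\varphi)$ in the axiomatized language of $\sigma$-finite $W^*$-probability spaces, and the analogous identification for the Groh ultrapower via Theorem~\ref{GrohThPredual}, are the prerequisites. Within this framework one has access to (a) the fact that ultrapowers by nonprincipal ultrafilters on $\N$ are $\aleph_1$-saturated \cite{BenYBHU}; (b) the uniqueness-of-saturated-models principle stating that two elementarily equivalent $\aleph_1$-saturated structures of density character $\aleph_1$ are isomorphic, by back-and-forth; and (c) $M$ (resp.\ $(M,\varphi)$) is elementarily equivalent to each of its ultrapowers.

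For Part~(1), under CH one has $\aleph_1 = 2^{\aleph_0}$, and the ultrapower of a separable structure by a nonprincipal ultrafilter on $\N$ has density character exactly $2^{\aleph_0}$. So both $(M,\varphi)^\mathcal{U}$ and $(M,\varphi)^\mathcal{V}$ are $\aleph_1$-saturated models of $\mathrm{Th}(M,\varphi)$ of density character $\aleph_1$; the back-and-forth supplies the required isomorphism. The Groh case is parallel, using the language of tracial matrix-ordered operator spaces from Theorem~\ref{GrohThPredual}.

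For Part~(2), the direction ``type $I$ $\Rightarrow$ all ultrapowers isomorphic'' should hold unconditionally: for $M=B(H)$ with separable $H$, the Ocneanu ultrapower $(B(H),\varphi)^\mathcal{U}$ can be identified explicitly with $B(H)$ itself, since the $\varphi$-centralizing bounded sequences collapse under the atomic structure of $B(H)$, independently of $\mathcal{U}$. For the converse, under $\neg$CH one argues in the spirit of \cite{FarahI}: a non-type-$I$, non-$III_0$ factor $M$ admits an order-property formula in its continuous theory. Concretely, such $M$ is either of type II (whose theory has the order property through compression to a $II_1$ subfactor and the non-commutation formula $\|[x,y]\|_{2}$ on the unit ball of the centralizer), or of type $III_\lambda$ for some $0<\lambda\leq 1$, in which case the discrete core contains a $II_\infty$ factor made accessible in the $\sigma$-finite $W^*$-probability space language via the modular automorphism group, whose trace then carries the order-property formula. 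Keisler's construction in continuous logic then produces nonprincipal ultrafilters $\mathcal{U},\mathcal{V}$ on $\N$ whose ultrapowers have different saturation character at some uncountable cardinal below $2^{\aleph_0}$, ruling out isomorphism.

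The main obstacle lies in Part~(2), non-type-$I$ direction: one must extract an explicit continuous formula with the order property in the axiomatized language and verify it survives in the modular setting. The exclusion of $III_0$ reflects the need for a discrete or otherwise rigid invariant (the period of $\sigma^\varphi$ for $III_\lambda$ with $\lambda\in(0,1)$; a trace in the $II$ or discrete-core case; and Connes' $T$-invariant considerations for $III_1$) that $III_0$ factors lack owing to their continuous flow of weights, where the argument cannot be run without further work.
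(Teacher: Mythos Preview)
Your treatment of Part~(1) is correct and matches the paper's argument via \cite[Th 5.6]{FarahII}.

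For Part~(2), however, there is a genuine gap. The order property, once established, gives (under $\neg$CH) nonprincipal ultrafilters $\mathcal{U},\mathcal{V}$ with $(M,\varphi)^\mathcal{U}\not\simeq(M,\varphi)^\mathcal{V}$ \emph{as models}, i.e.\ as $W^*$-probability spaces equipped with the ultrapower state. But the theorem asserts non-isomorphism \emph{as von Neumann algebras}, and these are not the same: a von Neumann algebra isomorphism need not carry $\varphi^\mathcal{U}$ to $\varphi^\mathcal{V}$. You have not addressed this at all, and the paper does substantial case-by-case work precisely here. In the $III_1$ case, one invokes strict homogeneity of the ultrapower \cite[Th 4.20]{HaagerupAndo}: any two faithful normal states are unitarily conjugate, so a von Neumann algebra isomorphism would upgrade to a model isomorphism, contradiction. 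In the $II_\infty$ case, the paper uses a geometric state and a counting argument: one gets $2^{\mathfrak{c}}$ pairwise non-isomorphic models, and since the isomorphism class of a geometric state on a fixed $II_\infty$ factor is determined by the trace of the first matrix-unit projection (a real number), each von Neumann algebra isomorphism class contains at most $\mathfrak{c}$ of these models, leaving $2^{\mathfrak{c}}$ non-isomorphic von Neumann algebras. The $III_\lambda$ case ($0<\lambda<1$) is not handled directly but reduced to the $II_\infty$ case via Connes' uniqueness of the generalized trace \cite[Th 4.3.2]{ConnesThesis}: a von Neumann algebra isomorphism of the $III_\lambda$ ultrapowers would force the centralizers tensored with $B(H)$ to be isomorphic, and those are $II_\infty$ ultrapowers.

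A secondary point: your order-property formulas are only sketched. In the non-tracial language the product is not a symbol, so one must use the smeared products $m_{K,L}$ and the conditional expectation onto the centralizer (expressible via $2F_{2N}-F_N$ in the lacunary case) to write down a genuine formula; the paper does this explicitly in each case. For $III_1$ the state is not lacunary, and the paper instead uses that the centralizer of the \emph{ultrapower} is a $II_1$ factor \cite[Prop 4.24]{HaagerupAndo}, finding the order-property witnesses there rather than in $M$ itself.
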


We give the proof in the last subsection. It uses in an intrinsically linked way various strong model theoretic results available thanks to our various axiomatizations and general structure theory of factors well-known to operator algebraist. The reader familiar with the finite case and/or some structure theory of type $III$ factors can probably read it right away without reading all the axiomatization details of the general case. The second point partially generalizes \cite[Th 4.7]{FarahI} (in the factor case). We conjecture that it is also valid for type $III_0$ factors (as a motivation the closest available theory for their discrete decomposition described in the language of section 4 is easily seen to have the order property in a similar way as in our proof but this is a priori not enough to contradict a von Neumann algebraic isomorphism of all ultrapowers) and probably for non-factors but since ultraproducts of type $III_0$ factors are usually not factors, we stick here to the statement for factorial ultrapowers which are easier to deal with. We also emphasize that even though Ocneanu theory is a theory of $W^*$-probability spaces that strongly depends on the state $\varphi$ we put on $M$, thanks to general structure theory in the factor case and axiomatizability of certain kinds of states (such as periodic states in the $III_\lambda$, $0<\lambda<1$ case) the final non-isomorphism above of ultrapowers is at the von Neumann algebraic level and not only at the $W^*$-probability space/model theoretic level (as in an earlier preprint version of this paper).

Let us now summarize the main ideas and results. Of course, the main problem in the general von Neumann algebra case is to deal with Tomita-Takesaki modular theory.  Section 1 is mainly concerned with producing a theory corresponding to the Ocneanu ultraproduct.
Of course, since the modular group is well behaved for this ultraproduct \cite[Th 4.1]{HaagerupAndo}, it is natural to put it in the theory.

 Subsection 1.1 proves elementary lemmas with the goal of identifying the language and several crucial properties of the theory that will enable us to characterize most of the pieces of this language (including the modular group) in first order continuous logic. Its goal is to give operator algebraic background for model theorist while showing to operator algebraist the way to look for useful operator algebraic results for our axiomatization purposes.  Note that the KMS condition does not seem easy to express in this way and we have to prefer explicit integral formulas for unbounded operators. This makes the axiomatization much trickier than for tracial von Neumann algebras or $C^*$-algebras.  In the non-tracial case, the choice of the topology turns out to be crucial. Since a model theoretic ultraproduct is always a quotient of bounded sequences, we have to consider a topology so that the Ocneanu ultraproduct will be given by a quotient of a set of bounded sequences and not some multiplier algebra as in the original definition. The inspiration comes from such a quotient description in \cite[Proposition 3.14]{HaagerupAndo}. Instead of the *-strong topology, we use the topology for which a net $x_n\to 0$ if $x_n=y_n+z_n$ with $y_n$ converging strongly to 0 and $z_n^*$ converging strongly to 0. It turns out that this topology comes from a norm with a nice modular theory formula (lemma \ref{NormG}) and an explicit unusual lipschitzianity estimate in time for the modular group (lemma \ref{ContinuityModular}) enabling to describe lots of integral formulas for various objects produced from the modular group as Riemann integrals having easy continuous logic definitions. This explicit uniform continuity of the modular group for the relevant metric makes much less surprising the result of \cite{HaagerupAndo} concerning its preservation by ultraproduct. The second most critical problem is the identification of the product which is not uniformly continuous for neither the strong topology nor the topology described above on the unit ball of $M$. The solution is standard in modular theory, spectral algebras have the nice continuity properties fixing this issue, but unfortunately, they are not in general stable by ultraproducts. Thus we follow the concrete way nicely commuting with ultraproducts that Ando and Haagerup used to produce elements in spectral algebras involving Fejer maps $F_N^\varphi$ obtained by integrating Fejer's kernel with the modular group. We will only put in the theory smeared products $m_{N,M}(x,y)=F_N^\varphi(x).F_M^\varphi(y)$. Combined with standard estimates on spectral algebras recalled in lemma \ref{ContinuityProduct}, this will give all the necessary pieces of data for the theory : the state, the adjoint, the metric, the modular group and the smeared products. However, to obtain a universal axiomatization, we also use various other data obtained from the modular group  by Riemann integrals coming from standard spectral theory results that we recall in lemma \ref{EquationForms}. This especially gives us the generator of the modular group $\Delta$ as a sesquilinear form $\mathscr{E}_1$ that we characterize using as intermediate steps the forms $\mathscr{E}_\alpha$ for fractional powers $\Delta^\alpha,$ $\alpha\in ]0,1[.$ This is in this way that we characterize the modular group of the theory as the one coming from $\varphi$.
With these preliminaries at hand, we can write down our axiomatization in subsection 1.2 and produce, in the proof of theorem \ref{OcneanuTh}, a von Neumann algebra from a model of the theory by a GNS construction starting from an algebra generated by the various $F_N^\varphi(x)$ between which the product is already defined. This especially does not use any relation to the Groh ultraproduct or any other non-$\sigma$-finite von Neumann algebra. Of course, the theory reduces to the tracial theory when the state is a trace, all the supplementary data being trivial, for instance all the smeared products $m_{N,M}$ are identically the usual multiplication map.

Subsection 1.3 then gives various supplementary axiomatization results for natural classes of von Neumann algebras in theorem \ref{MoreAxiom}. The reader should notice that the non-explicit axiomatization for type $III_\lambda$ factors $\lambda\in(0,1]$ uses for its proof various generalisations of results of \cite{HaagerupAndo} obtained later in  
 sections 3 and 4. We also advertise in Rmk \ref{NonAxiom} various non-axiomatizability results in our language for $W^*$-probability spaces straightforwardly deduced from results in \cite{HaagerupAndo}. Here the dependence on the state of the Ocneanu theory is crucial.
 
 Subsection 1.4, suggested by questions of Ilijas Farah and Ita\"i Ben Yaacov, gives a more minimal language where an axiomatization of $\sigma$-finite $W^*$-probability spaces is available, at the cost of loosing the universal explicit axiomatization. The main point is to check a definability in the sense of \cite{BenYBHU} of the modular group in a minimal enough language using some more technical (but standard) spectral theory.
 
Section 2 shows the axiomatizability of preduals of von Neumann algebras in a natural language giving an ultraproduct corresponding  to the Groh ultraproduct by taking duals. Unfortunately, neither Groh's construction nor the related construction of Raynaud (of ultraproducts of standard forms) considered instead in \cite{HaagerupAndo} gives an insight in a possible language for such a theory. Looking at Groh's construction rather suggests a theory for a pair of a predual $X$ of a von Neumann algebra and a weak-* dense $C^*$-subalgebra of $X^*$. And of course, the $C^*$ algebra structure is strongly used to show $X^*$ is a von Neumann algebra. We thus look for an implicit axiomatization. The strategy is to use Groh's idea to obtain stability by ultraproducts. But to identify the language, we look for a natural language  containing enough information to obtain stability by ultraroot. This right language becomes clear in the proof of theorem \ref{GrohThPredual}. As should be expected, it contains the structure of the predual $X$ as an operator space, some orders (as in the commutative case) but in each $M_n(X)$ and a dualization of a unit in $X^*$ which is usually called Haagerup trace. This theory is chosen in order to use a result of Choi and Effros giving a $C^*$ algebra structure to an operator system, image of a completely positive unital projection on a subspace of a $C^*$ algebra that will be an ultrapower of the subspace we want to put a $C^*$-algebra structure on. The language is thus natural for a predual of a dual operator system. Then once identified the theory with a language rich enough to obtain stability by ultraroot, we prove (and this is our starting point and the content of theorem \ref{GrohTh}) that Groh's expansion with  a $C^*$ algebra can be used to strengthen his result and give stability by ultraproduct in this stronger but still quite natural language. Note once again that section 2 leaves open the question of an explicit theory for preduals of von Neumann algebras. It would be interesting if such a theory followed the suggestion of Effros-Ruan \cite[p 303]{EffrosRuan} and could be based on looking at the subspaces completely isometric to trace class operators. However our paper gives a first answer to the related question of finding such an axiomatization of preduals using operator space/system theory and not involving the dual $C^*$ algebra structure (even in the form of a coproduct). Since the stability by ultraproducts in \cite{HaagerupAndo} of type $III_\lambda$ factors comes from properties of their preduals, the source of the corresponding stability properties is in this section and has an analogue for preduals. 

Of course, to be used to study the Ocneanu theory, one needs a relation between the Groh and Ocneanu theories. To made our theory easier to read, we start by an axiomatization of standard forms in subsection 2.3. This is a natural expansion of Groh's theory. The theory gathering Groh and Ocneanu theories is suggested from the corresponding relations of ultraproducts in \cite{HaagerupAndo} and written down explicitly in section 3. That's why we call it the Ando-Haagerup theory.

Finally, to get the lack of stability by ultrapowers of type $III_0$ factors, one needs a stability property of their discrete decomposition that we obtain in the case of non-countable ultraproducts in section 4 (the countable case coming from \cite{HaagerupAndo} again). This is a quite tricky explicit theory using the theory of section 3, an explicit theory for $II_\infty$ von Neumann algebras with a geometric state and a unitary implementing a crossproduct of $\Z$ with such a $II_\infty$ $W^*$ probability space. Note that the theory depends on parameters so that we don't find a single axiomatizable class containing discrete decompositions of $III_0$-factors. Of course, the union of all our classes contain all such 
 discrete decompositions.
 
 Let us finish by pointing out that, following the operator algebraic tradition, $\omega$ will always be a non-principal ultrafilter on a set $I$ (maybe uncountable).
\subsection*{Acknowledgments} The author is grateful to Ita\"i Ben Yaacov for helpful discussions  that motivated this investigation and helped improve its clarity. 
He also thanks Ilijas Farah for useful comments that suggested a more self-contained statement of theorem 1. He thanks both for asking questions leading to subsection 1.4.

\section{The Ocneanu Theory for $\sigma$-finite von Neumann algebras}\label{Ocneanu}
 \subsection{Setting and preliminaries}
We treat von Neumann algebras with a fixed state $\varphi$ (intended to be a faithful normal state) which will have one sort $U$ with domains of quantification $D_n$ for the operator norm ball of radius $n$. The metric $d$ will be related to $\varphi$ bellow in a way reducing to the usual $L^2$ norm when $\varphi$ is a trace, thus reducing to \cite{FarahII} in this special case.

Let us recall several norms related to $\varphi$ :
\[||x||_\varphi^2=\varphi(x^*x),\ \ \ ||x||_\varphi^{\#}=\sqrt{||x||_\varphi^2+||x^*||_\varphi^2}\] so that it is well-known that $||.||_\varphi$ defines the strong operator topology and $||.||_\varphi^{\#}$ the strong-* operator topology on the unit ball of $M$ in the $\sigma$-finite case with $\varphi$ faithful (these results are of course not obvious and will be explained later, they depend on the modular theory).

For our purposes, another less usual norm will be much more important :
\[ ||x||_\varphi^{*}=\inf_{y\in M}\left[\sqrt{\varphi(y^*y)+\varphi((x-y)(x-y)^*)}\right]
.\]

We want to take $d(x,y)=||x-y||_\varphi^{*}$. The motivation for this is the description of the Ocneaunu ultraproduct as a quotient vector space in \cite[Proposition 3.14]{HaagerupAndo}. By Ocneanu theory, we mean that we want a theory such that model theoretic ultraproduct recovers the Ocneanu ultraproduct.

Thus we have to check that all common operations are uniformly continuous, and specify modulus of continuity.
Note first that \[||x^*||_\varphi^{*}=||x||_\varphi^{*},\] and  $|\varphi(x)|\leq |\varphi(y)+\varphi(x-y)|\leq \sqrt{2}\sqrt{\varphi(y^*y)+\varphi((x-y)(x-y)^*)}$ for any $y$ by Cauchy-Schwarz and thus \[|\varphi(x)|\leq \sqrt{2}||x||_\varphi^{*}.\] Unfortunately, there is no uniform continuity bound for product.
To deal with that, we will use modular theory. We refer to \cite{TakesakiBook} for general results or \cite{HaagerupAndo} for some more specific properties. The state $\varphi$ on $M$ is by now always faithful and normal.

Only recall that if $\xi_\varphi$ denotes the GNS vector for $\varphi$, $S^0_\varphi(x\xi_\varphi)=x^*\xi_\varphi$ defines a densely defined closable operator with closure $S_\varphi$ such that $\Delta_\varphi=S_\varphi^*S_\varphi$ and the polar decomposition $S_\varphi=J_\varphi\Delta_\varphi^{1/2}$. $J_\varphi$ is called the modular conjugation operator and $\Delta_\varphi$ the modular operator.  The modular automorphism group is then defined by : \[\sigma_t^\varphi(x)=\Delta_\varphi^{it}x\Delta_\varphi^{-it}\]
Tomita's fundamental Theorem states that $\sigma_t^\varphi$ leave $M$ invariant and even defines a one parameter automorphism group of $M.$ Especially, $\sigma_t^\varphi$ preserves adjoint and $\varphi\circ \sigma_t^\varphi=\varphi.$
We will need Arveson's spectral theory (see \cite[Section XI.1]{TakesakiBook}).
For $f\in L^1(\R)$, one can then define for $x\in M$ \[\sigma_f^\varphi(x):=\int_{\R}dt f(t)\sigma_t^\varphi(x)\in M,\ \ \  \hat{f}(y)=\int_{\R}dt f(t)e^{ity}\]
so that we have a relation between our Fourier transform and functional calculus: \[\sigma_f^\varphi(x)\xi_\phi=\hat{f}(\ln(\Delta))(x\xi_\phi).\] Note that $\sigma_f^\varphi\circ \sigma_g^\varphi=\sigma_g^\varphi\circ \sigma_f^\varphi.$
Then the spectrum of $x$ is better understood by describing its complement as support usually is :
\[[\mathrm{Spec}_{\sigma^\varphi}(x)]^c=\{t\in \R \ : \  \exists f\in L^1(\R),\ \hat{f}(t)\neq 0\ \mathrm{and}\ \sigma_f(x)=0\}.\]
Conversely, from \cite[Lemma XI.1.3]{TakesakiBook}, if $x\neq 0$, $\mathrm{Spec}_{\sigma^\varphi}(x)\neq \emptyset$ and if $\text{supp}(\hat{f})\subset [\mathrm{Spec}_{\sigma^\varphi}(x)]^c$ then $\sigma_f(x)=0.$
The crucial definition for us is the spectral subspace of a subset $E\subset \R$ :
\[M(\sigma^\varphi,E)=\{x\in M : \mathrm{Spec}_{\sigma^\varphi}(x)\subset E\}.\]

$M(\sigma^\varphi,\{0\})$ is called the centralizer of $\varphi$ and from \cite[Corol XI.1.8]{TakesakiBook}, we have \[M(\sigma^\varphi,E)^*=M(\sigma^\varphi,-E);\ \ \ M(\sigma^\varphi,E)M(\sigma^\varphi,F)\subset M(\sigma^\varphi,\overline{E+F}).\]

We will also use the Arveson spectra defined by its complement:\[[\mathrm{Sp}(\sigma^\varphi)]^c=\{t\in \R\ :\  \exists f\in L^1(\R),  \  \hat{f}(t)\neq 0\ \mathrm{and}\ \sigma_f=0\}=\{t\in \R\ :\ \exists \epsilon >0,\  M(\sigma^\varphi,[t-\epsilon,t+\epsilon])=\{0\}\}\]

The following result is deduced from an old result of Haagerup (cf the proof of \cite[lemma 4.13]{HaagerupAndo})

\begin{lemma}\label{ContinuityProduct}
For any $a>0$, $x\in M(\sigma^\varphi,[-a,a]),y\in M$, we have, with $C_a=2e^{a}+e^{a/2}$:\[||(xy)^*||_\varphi\leq C_a||x||\ ||y^*||_\varphi,\ \ \ ||xy||_\varphi^\#\leq C_a||x||\ ||y||_\varphi^\#, \ \ \ ||xy||_\varphi^*\leq C_a||x||\ ||y||_\varphi^*.\]
\end{lemma}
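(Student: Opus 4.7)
The three inequalities are interrelated: the first is the analytic heart of the statement, and the other two are reduced to it by elementary manipulation. The second follows because $||xy||_\varphi = ||xy\xi_\varphi|| \leq ||x||\,||y||_\varphi$ (since $M$ acts contractively on $L^2$), so combining with the first via $||xy||_\varphi^{\#2} = ||xy||_\varphi^2 + ||(xy)^*||_\varphi^2$ gives the bound using $C_a \geq 1$. For the third, given $\varepsilon > 0$ one picks $w \in M$ with $\varphi(w^*w) + \varphi((y-w)(y-w)^*) \leq ||y||_\varphi^{*2} + \varepsilon$ and uses $xw$ as a candidate decomposition of $xy$ in the infimum defining $||xy||_\varphi^*$; the bound on the second summand is the first inequality applied to $x$ and $y - w$, and the first summand is controlled by $||x||\,||w||_\varphi$.

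For the first inequality, I would work in the GNS Hilbert space, writing $||(xy)^*||_\varphi = ||y^* x^* \xi_\varphi||$. The modular identities $S_\varphi x \xi_\varphi = x^* \xi_\varphi$ and $S_\varphi = J_\varphi \Delta_\varphi^{1/2}$ give $x^* \xi_\varphi = J_\varphi \Delta_\varphi^{1/2} x \xi_\varphi$. Since $x \in M(\sigma^\varphi, [-a,a])$ has bounded Arveson spectrum, $x$ is entire analytic for $\sigma^\varphi$, and analytic continuation of the real-$t$ identity $\Delta_\varphi^{it} x \xi_\varphi = \sigma^\varphi_t(x) \xi_\varphi$ (which uses $\Delta_\varphi \xi_\varphi = \xi_\varphi$) yields $\Delta_\varphi^{1/2} x \xi_\varphi = \sigma^\varphi_{-i/2}(x) \xi_\varphi$. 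Hence $x^* \xi_\varphi = (J_\varphi \sigma^\varphi_{-i/2}(x) J_\varphi) \xi_\varphi$; since $J_\varphi M J_\varphi = M'$ commutes with $y^* \in M$, one rewrites $y^* x^* \xi_\varphi = (J_\varphi \sigma^\varphi_{-i/2}(x) J_\varphi) y^* \xi_\varphi$ and concludes $||(xy)^*||_\varphi \leq ||\sigma^\varphi_{-i/2}(x)|| \cdot ||y^*||_\varphi$.

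What remains, and is the technical heart of the lemma, is the norm bound $||\sigma^\varphi_{-i/2}(x)|| \leq C_a ||x||$ for $x \in M(\sigma^\varphi, [-a,a])$; this is precisely the Haagerup estimate the paper cites. The plan is to exhibit $f \in L^1(\R)$ whose Fourier transform is compactly supported and equals $1$ on $[-a,a]$, so that $\sigma^\varphi_f(x) = x$. This identification uses the spectral subspace hypothesis recalled before the lemma (if $\mathrm{supp}(\hat{g}) \subset [\mathrm{Spec}_{\sigma^\varphi}(x)]^c$ then $\sigma^\varphi_g(x) = 0$) together with a standard approximate-identity argument: applying this vanishing to $\widehat{f * g_n} - \hat{g}_n = (\hat{f} - 1) \hat{g}_n$ for an approximate identity $(g_n)$ gives $\sigma^\varphi_f(\sigma^\varphi_{g_n}(x)) = \sigma^\varphi_{g_n}(x)$, and passing to the limit yields $\sigma^\varphi_f(x) = x$. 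Paley--Wiener then extends $f$ to an entire function of exponential type, and the integral formula $\sigma^\varphi_z(x) = \int f(t - z) \sigma^\varphi_t(x)\,dt$, initially valid for real $z$, continues holomorphically into the strip, giving $||\sigma^\varphi_{-i/2}(x)|| \leq ||f(\cdot + i/2)||_{L^1} \cdot ||x||$. The stated constant $C_a = 2e^a + e^{a/2}$ is obtained by optimizing $||f(\cdot + i/2)||_{L^1}$ over an explicit family (typically a linear combination of Fejer-type kernels at carefully chosen bandwidths). This final optimization of the shifted $L^1$-norm is the main obstacle; everything else is routine modular theory.
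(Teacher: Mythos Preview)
Your proposal is correct. The paper itself does not prove this lemma: it simply attributes the result to Haagerup and points to the proof of \cite[lemma 4.13]{HaagerupAndo}. Your argument is in fact the standard one underlying that citation: the reductions of the $\#$- and $*$-norm bounds to the first inequality are exactly as you describe, the commutant trick with $J_\varphi$ giving $\|(xy)^*\|_\varphi\leq \|\sigma^\varphi_{-i/2}(x)\|\,\|y^*\|_\varphi$ is the right mechanism, and the kernel method (choose $f\in L^1$ with $\hat f\equiv 1$ on $[-a,a]$, analytically continue $\sigma^\varphi_f(x)=x$ to the line $\mathrm{Im}\,z=-1/2$) is how one bounds $\|\sigma^\varphi_{-i/2}(x)\|$.

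The one place you stop short is the explicit constant: you say $C_a=2e^a+e^{a/2}$ arises from optimizing $\|f(\cdot+i/2)\|_{L^1}$ over Fejer-type combinations, but do not exhibit the kernel or carry out the estimate. Since the paper does not do this either (it outsources the whole computation), your write-up is at least as complete as what the paper offers. If you want to close this gap yourself, the de la Vall\'ee Poussin kernel $h_K=(K{+}1)f_{K+1}-Kf_K$ that the paper introduces later (with $\widehat{h_K}\equiv 1$ on $[-K,K]$) is the natural candidate to plug into your shifted-$L^1$ bound.
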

Thus product will be uniformly continuous on balls of $M(\sigma^\varphi,[-a,a])$. We could try taking those balls as domain of quantification $E_{a,n}, a,n-1\in \N$ of another sort $V,$ but they are in general not stable by ultraproduct and cannot be put in the theory. 
However, we will use the next completeness result:

\begin{lemma}\label{CompleteBalls}
The operator norm unit balls $(M)_1$ and $(M(\sigma^\varphi,[-K,K]))_1$, for  $K\geq 0,$ are complete for $d$.
\end{lemma}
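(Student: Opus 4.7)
My plan is to derive a closed-form Hilbert-space expression for $\|\cdot\|_\varphi^{*}$ and then deduce completeness via weak-$*$ compactness of the operator ball. Writing $H_\varphi$ for the GNS Hilbert space of $(M,\varphi)$, I use $\|y\|_\varphi^{2}=\|y\xi_\varphi\|^{2}$ and $\|(w-y)^{*}\|_\varphi=\|S_\varphi(w-y)\xi_\varphi\|=\|\Delta_\varphi^{1/2}(w-y)\xi_\varphi\|$ (since $S_\varphi=J_\varphi\Delta_\varphi^{1/2}$ and $J_\varphi$ is isometric), and parameterize $\xi=y\xi_\varphi$, rewriting
\[
\|w\|_\varphi^{*\,2}=\inf_{\xi\in M\xi_\varphi}\bigl(\|\xi\|^{2}+\|\Delta_\varphi^{1/2}(w\xi_\varphi-\xi)\|^{2}\bigr).
\]
Because $M\xi_\varphi$ is a core for $S_\varphi$, hence for $\Delta_\varphi^{1/2}$, and the functional above is continuous in the graph norm, the infimum extends to all $\xi\in D(\Delta_\varphi^{1/2})$. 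A pointwise minimization via the spectral theorem for $\Delta_\varphi$ (minimizing $|\xi(t)|^{2}+t|\alpha(t)-\xi(t)|^{2}$ with $\alpha=w\xi_\varphi$) yields $\xi_{*}(t)=\frac{t}{1+t}\alpha(t)$ and the value $\int\frac{t}{1+t}|\alpha(t)|^{2}d\mu(t)$, which gives the clean formula
\[
\|w\|_\varphi^{*}=\|T\,w\xi_\varphi\|_{H_\varphi},\qquad T:=\bigl(\Delta_\varphi(1+\Delta_\varphi)^{-1}\bigr)^{1/2},
\]
where $T$ is a bounded positive contraction on $H_\varphi$ by functional calculus on $t\mapsto(t/(1+t))^{1/2}\in[0,1)$.

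With this formula, completeness of $(M)_{1}$ becomes immediate. Given a $d$-Cauchy sequence $(x_{n})\subset(M)_{1}$, the images $Tx_{n}\xi_\varphi$ are Cauchy in $H_\varphi$ and converge to some $h\in H_\varphi$. By Banach--Alaoglu the sequence has a weak-$*$ cluster point $x\in(M)_{1}$; passing to a subnet with $x_{n_\alpha}\to x$ weak-$*$ yields $x_{n_\alpha}\xi_\varphi\to x\xi_\varphi$ weakly in $H_\varphi$. Weak continuity of the bounded operator $T$ then gives $Tx_{n_\alpha}\xi_\varphi\to Tx\xi_\varphi$ weakly, which combined with the strong convergence $Tx_{n_\alpha}\xi_\varphi\to h$ forces $Tx\xi_\varphi=h$. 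Consequently $d(x_{n},x)=\|T(x_{n}-x)\xi_\varphi\|=\|Tx_{n}\xi_\varphi-h\|\to 0$.

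The argument for $(M(\sigma^\varphi,[-K,K]))_{1}$ is identical once one records that the spectral subspace is weak-$*$ closed: it equals the intersection over $f\in L^{1}(\R)$ with $\mathrm{supp}(\hat f)\subset[-K,K]^{c}$ of the kernels of the weak-$*$ continuous maps $\sigma_{f}^{\varphi}$, so the weak-$*$ cluster point $x$ automatically lies in $M(\sigma^\varphi,[-K,K])_{1}$ and the rest of the proof goes through unchanged.

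The main obstacle is the derivation of the formula itself: one must justify the passage from $\inf_{y\in M}$ to $\inf_{\xi\in D(\Delta_\varphi^{1/2})}$ via the core property of $M\xi_\varphi$ and graph-norm continuity of the functional, carry out the minimization rigorously (checking $\xi_{*}$ does land in $D(\Delta_\varphi^{1/2})$), and verify the operator $T$ extends to a bounded operator on $H_\varphi$. Once this Hilbert-space representation of $\|\cdot\|_\varphi^{*}$ is in hand, both completeness statements reduce to the interplay of Banach--Alaoglu, weak convergence of the $x_{n}\xi_\varphi$, and weak continuity of the bounded operator $T$.
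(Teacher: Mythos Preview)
Your proof is correct. The closed formula you derive, $\|w\|_\varphi^{*}=\|(\Delta_\varphi(1+\Delta_\varphi)^{-1})^{1/2}w\xi_\varphi\|$, is exactly the content of the paper's Lemma~\ref{NormG} (stated there as $\|x\|_\varphi^{*}=\|\Delta^{1/2}(1+\Delta)^{-1/2}x\xi_\varphi\|$), and both proofs rest on it. The execution, however, differs: the paper argues by splitting a Cauchy sequence $a_n=b_n+c_n$ with $b_n$ Cauchy in $\|\cdot\|_\varphi$ and $c_n^{*}$ Cauchy in $\|\cdot\|_\varphi$, taking $L^{2}$ limits $b,c$, identifying $a=b+c$ as the weak-$*$ limit, and then bounding $d(a_n,a)$ via the infimum description of the norm. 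Your route is more streamlined: once $d(x,y)=\|T(x-y)\xi_\varphi\|$ with $T$ bounded, Banach--Alaoglu plus weak continuity of $T$ closes the argument in one step, with no need to track two separate Cauchy pieces. For the spectral subspace, the paper shows $\sigma_g^\varphi(a)=0$ directly by estimating $\|\sigma_g^\varphi(a_n-a)\|_\varphi^{*}\leq\|a_n-a\|_\varphi^{*}$ through the $G_0^\varphi$ formula, whereas you invoke the standard weak-$*$ closedness of spectral subspaces; both are valid, and yours avoids the extra estimate. The only point to make airtight in your write-up is that $x_{n_\alpha}\xi_\varphi\to x\xi_\varphi$ weakly indeed follows from weak-$*$ convergence because $\varphi$ is normal (so $\langle\,\cdot\,\xi_\varphi,\eta\rangle\in M_*$ for each $\eta$), and that the limit $x$ is independent of the chosen subnet since $T$ is injective ($\Delta_\varphi$ being nonsingular) and $\xi_\varphi$ separating; you implicitly use this when passing from convergence along the subnet to convergence of the full sequence.
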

\begin{proof}
For, take a Cauchy net $a_n$ say in the unit ball $M_1$, then take a  decomposition $a_n=b_n+c_n, b_n,c_n\in M$, $b_n$ Cauchy for $||.||_\varphi$ and $c_n$ Cauchy for $||(.)^*||_\varphi$ as is possible from the definition of $d$ given in (12). From the proof of the next lemma, one can take $||b_n||_\varphi\leq 2||x||, ||(c_n)^*||_\varphi\leq 2||x||.$ From the completeness of $L^2(M,\varphi)$ there is $b,c^*\in L^2(M,\varphi)$ such that $||b_n-b||_\varphi\to 0, ||c_n^*-c^*||_\varphi\to 0$. Let us call $a=b+c$ which is also the weak-* limit of $a_n=b_n+c_n$, which is thus in the operator norm unit ball $(M)_1$. Finally using the alternative infemum describing $d$ in the proof of the next lemma (since $b_n-b=a_n-a-(c_n-c)\in D(\Delta^{1/2})$):\[d(a_n-a,0)^2\leq||b_n-b||_\varphi^2+||c_n^*-c^*||_\varphi^2\to 0,\] thus $(M)_1$ is indeed complete. 
Take any $g\in L^1(\R)$ with $\text{supp}(\hat{g})\subset [-K,K]^c$, then if $a_n\in M(\sigma^\varphi,[-K,K])$ as above, $\sigma_g^\varphi(a_n)=0$ and, using the formula obtained in the next lemma, \[ ||\sigma_g^\varphi(a_n-a)||_\varphi^*=||\sigma_g^\varphi G_0^\varphi(a_n-a)||_\varphi^\#\leq || G_0^\varphi(a_n-a)||_\varphi^\#=||(a_n-a)||_\varphi^*\to 0\] and thus $\sigma_g^\varphi(a)=0$ and since this is for all $g$ as above, $a\in M(\sigma^\varphi,[-K,K]).$
\end{proof}

We also record the following useful spectral theory result and deduce the modular theory formula for our distance. We already needed it in our previous lemma and we will use it crucially later.
\begin{lemma}\label{NormG}
If $g_s(t)=\frac{2e^{-ist}}{e^{\pi t}+e^{-\pi t}}$ then $||g_s||_{L^1(\R)}=1$ and \[\sigma_{g_s}^\varphi(x)\xi_\varphi=2e^{s/2}\Delta^{1/2}(\Delta+e^s)^{-1}(x\xi_\varphi),\]
and, if we call $G_s^\varphi=\sigma_{g_s}^\varphi$ we have the equality, for any $x\in M$ :
\[2||x||_\varphi^*=||G_0^\varphi(x)||_\varphi^\#.\]
\end{lemma}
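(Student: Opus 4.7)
The plan is to treat the two parts of the lemma essentially separately, joined by the formula for $G_0^\varphi$ coming from the first part. Since $|g_s(t)|=\operatorname{sech}(\pi t)$, the $L^1$ norm is $\int_\R \operatorname{sech}(\pi t)\,dt=1$ via the antiderivative $\frac{2}{\pi}\arctan(e^{\pi t})$. For the Fourier transform, I would write $\hat{g_s}(y)=\int \operatorname{sech}(\pi t)\, e^{it(y-s)}\,dt$ and evaluate by residues: the integrand has simple poles at $z=i(k+\frac12)$, $k\in\Z$, and summing the geometric series of residues in whichever half-plane gives convergence produces $\hat{g_s}(y)=\operatorname{sech}((y-s)/2) = 2e^{s/2}e^{y/2}/(e^y+e^s)$. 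Substituting into the Arveson-type identity $\sigma_f^\varphi(x)\xi_\varphi=\hat{f}(\ln\Delta)(x\xi_\varphi)$ via Borel functional calculus then yields $\sigma_{g_s}^\varphi(x)\xi_\varphi = 2e^{s/2}\Delta^{1/2}(\Delta+e^s)^{-1}(x\xi_\varphi)$.

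For the equality $2\|x\|_\varphi^*=\|G_0^\varphi(x)\|_\varphi^\#$, the strategy is to evaluate $(\|x\|_\varphi^*)^2$ by solving a Hilbert-space least-squares problem on $L^2(M,\varphi)$. Writing $\xi=x\xi_\varphi$ and $\eta=y\xi_\varphi$ for $y\in M$, and using $(x-y)^*\xi_\varphi = J_\varphi\Delta^{1/2}(\xi-\eta)$ together with the antiunitarity of $J_\varphi$, one obtains $\varphi(y^*y)+\varphi((x-y)(x-y)^*) = F(\eta)$ where $F(\eta) := \|\eta\|^2 + \|\Delta^{1/2}(\xi-\eta)\|^2$. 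Since $M\xi_\varphi$ is a core for $\Delta^{1/2}$ (Tomita's theorem, as $\xi_\varphi$ is cyclic and separating) and $F$ is continuous in the graph norm of $\Delta^{1/2}$, one has $\inf_{y\in M} F(y\xi_\varphi) = \inf_{\eta\in D(\Delta^{1/2})} F(\eta)$. The candidate minimizer is $\eta_0:=\Delta(1+\Delta)^{-1}\xi$, which lies in $D(\Delta^{1/2})$ by the spectral theorem (using $\lambda^3/(1+\lambda)^2 \leq \lambda$); moreover $\xi-\eta_0 = (1+\Delta)^{-1}\xi$ lies in $D(\Delta)$ with $\Delta(\xi-\eta_0)=\eta_0$, and this algebraic identity makes the cross-terms in $F(\eta_0+\zeta)-F(\eta_0)$ cancel, leaving $\|\zeta\|^2 + \|\Delta^{1/2}\zeta\|^2 \geq 0$. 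Hence $\eta_0$ is the unique minimizer and $(\|x\|_\varphi^*)^2 = F(\eta_0) = \langle \Delta(1+\Delta)^{-1}\xi,\xi\rangle$.

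Setting $s=0$ in the first part gives $G_0^\varphi(x)\xi_\varphi = 2\Delta^{1/2}(1+\Delta)^{-1}\xi$, so $\|G_0^\varphi(x)\|_\varphi^2 = 4\langle\Delta(1+\Delta)^{-2}\xi,\xi\rangle$, and, using $(G_0^\varphi(x))^*\xi_\varphi = J_\varphi\Delta^{1/2}G_0^\varphi(x)\xi_\varphi$, $\|(G_0^\varphi(x))^*\|_\varphi^2 = \|\Delta^{1/2}G_0^\varphi(x)\xi_\varphi\|^2 = 4\langle\Delta^2(1+\Delta)^{-2}\xi,\xi\rangle$; summing and using $(\Delta+\Delta^2)(1+\Delta)^{-2}=\Delta(1+\Delta)^{-1}$ yields $(\|G_0^\varphi(x)\|_\varphi^\#)^2 = 4\langle\Delta(1+\Delta)^{-1}\xi,\xi\rangle = 4(\|x\|_\varphi^*)^2$, as desired. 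The main obstacle I expect is the reduction of $\inf_{y\in M}$ to $\inf_{\eta\in D(\Delta^{1/2})}$: the minimizer $\eta_0$ typically does not belong to $M\xi_\varphi$, so one cannot exhibit an extremal $y\in M$, and it is precisely the core property of $M\xi_\varphi$ in $D(\Delta^{1/2})$ combined with graph-norm continuity of $F$ that bridges this gap.
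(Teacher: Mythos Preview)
Your proof is correct and follows essentially the same approach as the paper: both reduce the infimum over $M$ to one over $D(\Delta^{1/2})$ (the paper calls this a ``completeness argument'', you invoke the core property), identify the same minimizer $\eta_0=\Delta(1+\Delta)^{-1}\xi$ (the paper cites Lax--Milgram, you verify directly that cross-terms vanish), and obtain $(\|x\|_\varphi^*)^2=\|\Delta^{1/2}(1+\Delta)^{-1/2}\xi\|^2=\langle\Delta(1+\Delta)^{-1}\xi,\xi\rangle$ before matching it with the spectral computation of $(\|G_0^\varphi(x)\|_\varphi^\#)^2/4$. The only difference is that you supply the residue calculation for $\hat g_s$ explicitly where the paper simply cites \cite[Lemma VI.1.21]{TakesakiBook}.
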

\begin{proof}
This first equality is 
\cite[Lemma VI.1.21]{TakesakiBook}.
A completeness argument and a computation shows \begin{align*} (||x||_\varphi^{*})^2&=\inf_{y\in M}\left[||y\xi_\varphi||^2+||\Delta^{1/2}x\xi_\varphi||^2+||\Delta^{1/2}(y\xi_\varphi)||^2-2\Re\langle \Delta^{1/2}(y\xi_\varphi),\Delta^{1/2}(x\xi_\varphi)\rangle\right]\\&=\inf_{\eta\in D(\Delta^{1/2})}\left[||\eta||^2+||\Delta^{1/2}x\xi_\varphi||^2+||\Delta^{1/2}(\eta)||^2-2\Re \langle \Delta^{1/2}(\eta),\Delta^{1/2}(x\xi_\varphi)\rangle\right]
.\end{align*}
By Lax-Milgram lemma (see e.g. \cite[Corol V.8]{Brezis} with $a(u,v)=\langle (1+\Delta) u,v\rangle$)
, the infemum is easily reached at $\eta=\eta_0=\Delta(1+\Delta)^{-1}(x\xi_\varphi)$. Indeed, the minimization problem is equivalent to finding the $\inf$ of $a(\eta,\eta)-2\Re [a(\eta,\eta_0)]+a(\eta_0,\eta_0)=a(\eta-\eta_0,\eta-\eta_0)$ which is obviously minimal at $\eta=\eta_0.$

Since $x\xi_\varphi-\Delta(1+\Delta)^{-1}(x\xi_\varphi)=(1+\Delta)^{-1}(x\xi_\varphi)$ we obtain by an easy computation : \[(||x||_\varphi^{*})^2=||\Delta(1+\Delta)^{-1}(x\xi_\varphi)||^2+||\Delta^{1/2}(1+\Delta)^{-1}(x\xi_\varphi)||^2=||\Delta^{1/2}(1+\Delta)^{-1/2}(x\xi_\varphi)||^2.\]
On the other hand, a similar easy computation gives 
\[\frac{(||G_0^\varphi(x)||_\varphi^\#)^2}{4}=||\Delta^{1/2}(\Delta+1)^{-1}(x\xi_\varphi)||^2+||\Delta(\Delta+1)^{-1}(x\xi_\varphi)||^2=||\Delta^{1/2}(1+\Delta)^{-1/2}(x\xi_\varphi)||^2.\]
\end{proof}

From this we also deduce an explicit (uniform) continuity bound for the modular group in the distance $d$, and deduce from this an explicit bound giving an approximation formula for certain $\sigma_f^\varphi$ that will be used in our axiomatization.

\begin{lemma}\label{ContinuityModular}
For any $x\in M$ we have $||\sigma_t^\varphi(x)-x||_\varphi^*\leq 2t||x||_\varphi^\# $ and $||\sigma_t^\varphi(x)||_\varphi^*\leq  ||f||_{L^1(\R)} ||x||_\varphi^*.$

As a consequence, for $x\in M$ and $f\in L^1(\R)\cap C^1_b(\R)$, then  \[\left\|\sigma^\varphi_f(x)-\frac{1}{n^2}\sum_{k=-n^3}^{n^3-1}f(\frac{k}{n^2})\sigma^\varphi_{k/n^2}(x)\right\|_\varphi^*\leq ||1_{]-\infty, n]\cup [n,\infty[}f||_1||x||_\varphi^*+\frac{2||f||_1}{n^2}||x||_\varphi^\#+\frac{||f'||_\infty}{n}||x||_\varphi^*\]
\end{lemma}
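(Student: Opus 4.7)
My plan is to establish the two uniform-continuity estimates first, and then derive the Riemann-sum approximation as a standard splitting argument.

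For the first inequality $\|\sigma_t^\varphi(x)-x\|_\varphi^*\leq 2t\|x\|_\varphi^\#$, I would invoke the modular-theoretic formula from Lemma~\ref{NormG},
\[(\|\sigma_t^\varphi(x)-x\|_\varphi^*)^2 = \|\Delta^{1/2}(1+\Delta)^{-1/2}(\sigma_t^\varphi(x)-x)\xi_\varphi\|^2,\]
and use the identity $\Delta^{-it}\xi_\varphi=\xi_\varphi$ to rewrite $(\sigma_t^\varphi(x)-x)\xi_\varphi=(\Delta^{it}-1)x\xi_\varphi$. Setting $\eta=x\xi_\varphi\in D(\Delta^{1/2})$, one has $\|(1+\Delta)^{1/2}\eta\|^2=\|x\xi_\varphi\|^2+\|Jx^*\xi_\varphi\|^2=(\|x\|_\varphi^\#)^2$, so everything reduces to the scalar functional-calculus bound
\[\sup_{\lambda>0}\frac{\lambda^{1/2}\,|\lambda^{it}-1|}{1+\lambda}\leq 2t.\]
This follows from $|e^{i\theta}-1|\leq|\theta|$ together with the substitution $\lambda=e^u$, which turns the supremum into $t\sup_{u\in\R}|u|/(2\cosh(u/2))$, bounded by an absolute constant comfortably $\leq 2$. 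For $t\geq 1$ one can also just use the trivial bound $\|\sigma_t^\varphi(x)-x\|_\varphi^*\leq 2\|x\|_\varphi^*\leq 2\|x\|_\varphi^\#$.

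For the second inequality (which one reads as $\|\sigma_f^\varphi(x)\|_\varphi^*\leq \|f\|_{L^1}\|x\|_\varphi^*$, the stated formula having $\sigma_t$ instead of $\sigma_f$ being a typo), I would first record that $\sigma_s^\varphi$ is a $\ast$-automorphism of $M$ preserving $\varphi$; applied to any decomposition $x=y+z$ realizing $\|x\|_\varphi^*$, this shows $\sigma_s^\varphi$ is isometric for $\|\cdot\|_\varphi^*$. Viewing $\sigma_f^\varphi(x)=\int f(s)\sigma_s^\varphi(x)\,ds$ as a Bochner-type integral into the complete metric space of Lemma~\ref{CompleteBalls}, the triangle inequality gives the claimed bound.

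For the Riemann-sum consequence, I would split the defining integral of $\sigma_f^\varphi(x)$ over the tails $|t|\geq n$ and over $[-n,n]=\bigcup_{k=-n^3}^{n^3-1}[k/n^2,(k+1)/n^2]$. The tails contribute at most $\|1_{|t|\geq n}f\|_1\|x\|_\varphi^*$ via the isometry above. On the interval the difference with the Riemann sum equals
\[\sum_k\int_{k/n^2}^{(k+1)/n^2}\bigl[(f(t)-f(k/n^2))\sigma_t^\varphi(x)+f(k/n^2)(\sigma_t^\varphi(x)-\sigma_{k/n^2}^\varphi(x))\bigr]dt.\]
The first summand is controlled using $|f(t)-f(k/n^2)|\leq \|f'\|_\infty/n^2$ and the $\|\cdot\|_\varphi^*$-isometry of $\sigma_t^\varphi$; summing over the $2n^3$ subintervals of length $1/n^2$ yields the $\|f'\|_\infty/n\cdot\|x\|_\varphi^*$ term. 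For the second summand I would apply the isometry $\sigma_{k/n^2}^\varphi$ to reduce to $\sigma_{t-k/n^2}^\varphi(x)-x$, then use the first inequality with $|t-k/n^2|\leq 1/n^2$ to bound by $2/n^2\cdot\|x\|_\varphi^\#$; summing $|f(k/n^2)|/n^2$ as a Riemann sum bounded by $\|f\|_1$ (up to lower-order corrections absorbable into the other terms) produces the $2\|f\|_1/n^2\cdot\|x\|_\varphi^\#$ term.

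The main delicate point is the operator-theoretic bound behind the first inequality; once that is in hand, the rest is elementary splitting, the triangle inequality, and Riemann-sum bookkeeping.
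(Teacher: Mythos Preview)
Your proposal is correct and follows essentially the same route as the paper: the first estimate is reduced via the formula $\|y\|_\varphi^*=\|\Delta^{1/2}(1+\Delta)^{-1/2}y\xi_\varphi\|$ from Lemma~\ref{NormG} to a scalar spectral bound, and the Riemann-sum estimate is the same tail-plus-subdivision decomposition. The only difference worth noting is the choice of splitting on each subinterval: the paper writes $f(t)(\sigma_t-\sigma_{k/n^2})(x)+(f(t)-f(k/n^2))\sigma_{k/n^2}(x)$ rather than your $(f(t)-f(k/n^2))\sigma_t(x)+f(k/n^2)(\sigma_t-\sigma_{k/n^2})(x)$, which lets it integrate $|f(t)|$ over $[-n,n]$ directly and obtain the factor $\|f\|_1$ exactly, without the Riemann-sum correction you flag as ``absorbable''. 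This is inessential for the applications, but it is why the paper's constants come out clean.
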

We will be especially interested in this result for Fejer's kernel
$f_m:\R\to \R,m>0$ defined by \[f_m(t)=\frac{m}{2\pi}1_{\{t=0\}}+1_{\{t\neq 0\}}\frac{1-\cos(mt)}{\pi m t^2}\geq 0\] of Fourier transform $\widehat{f_m}(t)=\max(0,1-|t[/m).$ Note that $||f_m||_1=1$. We will write \[F_m^\varphi=\sigma_{f_m}^\varphi.\] Note also that $||f_m'||_\infty\leq\frac{m^2}{\pi}$ 
 and $||1_{]-\infty, n]\cup [n,\infty[}f_m||_1\leq \frac{8}{\pi m n^3}.$

More generally, we will write $f_{m,l}(t)= f_m(t)e^{itl}$ the variant with translated Fourier transform, $F_{m,l}^\varphi=\sigma_{f_{m,l}}^\varphi$ and $||f_{m,l}'||_\infty\leq\frac{m^2}{\pi}+\frac{lm}{2\pi}.$

\begin{proof}From the proof of our previous lemma, bounding $||\sigma_t^\varphi(x)-x||_\varphi^*$ corresponds to bounding:
\[||\Delta^{1/2}(1+\Delta)^{-1/2}([\sigma_t^\varphi(x)-x]\xi_\varphi)||^2=\langle\frac{|\Delta^{it}-1|^2}{\Delta+\Delta^{-1}}(\Delta^2+1)(1+\Delta)^{-1}x\xi_\varphi,x\xi_\varphi \rangle\leq (2t)^2\langle(1+\Delta)x\xi_\varphi,x \xi_\varphi\rangle \]
where we used spectral theory and the elementary bound on $\R^2$, $\frac{|e^{ith}-1|^2}{e^h+e^{-h}}\leq t^2h^2e^{-|h|}\leq (2t)^2e^{-2}$ (from the maximum achieved at $h=2$) and well-known bounds $||(1+\Delta)^{-1}||\leq 1,||\Delta(1+\Delta)^{-1}||\leq 1.$ The last bound is nothing but the expected one.

For the second estimate, it suffices to decompose naively (and bound as naively) :
\begin{align*}\int_{\R}dt f(t)\sigma_t^\varphi(x)&=\sum_{k=-n^3}^{n^3-1}\int_{k/n^2}^{(k+1)/n^2}dt f(t)(\sigma_t^\varphi(x)-\sigma_{k/n^2}^\varphi(x))+(f(t)-f(k/n^2))\sigma_{k/n^2}^\varphi(x)\\&+\int_{]-\infty, n]\cup [n,\infty[}dt f(t)\sigma_t^\varphi(x)+\frac{1}{n^2}\sum_{k=-n^3}^{n^3-1}f(\frac{k}{n^2})\sigma^\varphi_{k/n^2}(x).\end{align*}

\end{proof}


We will also need a way to identify the spectral algebras. This of course works in a more general setting of covariant systems $(M,\R,\sigma)$ over $\R$ (cf. e.g. \cite[section XI.1]{TakesakiBook}). This is really standard and we only include a proof for the reader's convenience since we will use this quite often.

\begin{lemma}\label{SpectralAlgebras}
Let $x\in M$ and $K\in \N$. Then  $x\in M(\sigma,[-K,K])$ if and only if for any $L\geq 2K,L\in \N$ $\sigma_{f_{K,\pm L}}(x)=0.$
\end{lemma}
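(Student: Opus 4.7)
The plan is to unfold Arveson's spectral description and exploit the explicit shape of $\widehat{f_{K,\pm L}}(t) = \max(0, 1 - |t \mp L|/K)$, a translated Fej\'er triangle with topological support $[\pm L - K, \pm L + K]$. Both implications then reduce to locating this support relative to $[-K, K]$ and its complement.

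For the forward direction, I start from $x \in M(\sigma, [-K, K])$, so $[\mathrm{Spec}_{\sigma}(x)]^c \supset (-\infty, -K) \cup (K, \infty)$. For $L > 2K$ the support $[L - K, L + K]$ of $\widehat{f_{K, L}}$ lies in the open set $(K, \infty)$, hence in $[\mathrm{Spec}_{\sigma}(x)]^c$, and \cite[Lemma XI.1.3]{TakesakiBook} directly yields $\sigma_{f_{K, L}}(x) = 0$. The only subtlety is the boundary case $L = 2K$, where the support $[K, 3K]$ touches $K$ and $K$ need not lie in $[\mathrm{Spec}_{\sigma}(x)]^c$. To handle this I modulate: for $\eta > 0$, $f_{K, 2K + \eta}(t) = e^{i \eta t} f_{K, 2K}(t)$ has Fourier support $[K + \eta, 3K + \eta] \subset (K, \infty)$, so the previous case gives $\sigma_{f_{K, 2K + \eta}}(x) = 0$. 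Combined with $\|f_{K, 2K + \eta} - f_{K, 2K}\|_1 \to 0$ by dominated convergence and the elementary bound $\|\sigma_f(x)\| \leq \|f\|_1 \|x\|$, this passes to the limit to yield $\sigma_{f_{K, 2K}}(x) = 0$. The case $-L$ is symmetric under $t \mapsto -t$.

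For the reverse direction, I use the description $[\mathrm{Spec}_{\sigma}(x)]^c = \{t : \exists f \in L^1(\R),\ \hat{f}(t) \neq 0,\ \sigma_f(x) = 0\}$ recalled in the preliminaries. It suffices, for every $t_0$ with $|t_0| > K$, to exhibit an admissible $L \in \N$ with $L \geq 2K$ and $\widehat{f_{K, \pm L}}(t_0) > 0$; after a sign swap I may assume $t_0 > K$ and seek $L \in \N \cap [2K, \infty) \cap (t_0 - K, t_0 + K)$. If $K < t_0 < 3K$, the choice $L = 2K$ works, since $t_0 > K$ and $t_0 < 3K$ give exactly $|2K - t_0| < K$. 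If $t_0 \geq 3K$, I take $L = \lfloor t_0 \rfloor$, which lies in $(t_0 - 1, t_0] \subset (t_0 - K, t_0 + K)$ and satisfies $L \geq 3K - 1 \geq 2K$ (using $K \geq 1$, implicit since $f_m$ requires $m > 0$). Then $\widehat{f_{K, L}}(t_0) > 0$ and, by hypothesis, $\sigma_{f_{K, L}}(x) = 0$, witnessing $t_0 \in [\mathrm{Spec}_{\sigma}(x)]^c$.

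The main and really only obstacle I expect is the boundary case $L = 2K$ in the forward direction, where the cited Takesaki lemma does not apply verbatim and the short $L^1$-approximation above is needed. Everything else is straightforward case analysis on how integer shifts of a Fej\'er triangle of base $2K$ can be made to cover any point of $(K, \infty)$.
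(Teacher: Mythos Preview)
Your proof is correct and follows essentially the same strategy as the paper's: an $L^1$-approximation for the forward direction to push the Fourier support strictly inside $[\mathrm{Spec}_\sigma(x)]^c$, and a covering argument for the converse. The only cosmetic difference is that the paper shrinks the width ($f_{K-\epsilon,\pm L}$) to handle all $L\geq 2K$ uniformly, whereas you shift the center ($f_{K,2K+\eta}$) only at the boundary case $L=2K$; both amount to the same $\|\cdot\|_1$-continuity trick.
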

\begin{proof}
For $x \in M(\sigma,[-K,K])$, 
note that $[-K,K]\cap \text{supp}(\widehat{f_{K-\epsilon,\pm L}})=[-K,K]\cap[-K+\epsilon\mp L,K-\epsilon\mp L])=\emptyset$ thus by \cite[lemma XI.1.3]{TakesakiBook} $\sigma_{f_{K-\epsilon,\pm L}}(x)=0.$ The limit $\epsilon\to 0$ concludes since $||f_{K-\epsilon,\pm L}-f_{K,\pm L}||_1\to0$ and thus $||\sigma_{f_{K-\epsilon,\pm L}}(x)-\sigma_{f_{K,\pm L}}(x)||\to 0$. Conversely, if $\sigma_{f_{K,\pm L}}(x)=0$ $\sigma_{\tau_{\pm L}f_K}(x)=0$ where $f_{K,\pm L}(x)={\tau_{\pm L}f_K}(x)=f_K(x\mp L)$ is the translation of ${f_K}$ by $\pm L$ which is non-zero in $]-K\pm L,K\pm L[$ and those sets cover $[-K,K]^c$ thus any point outside  $[-K,K]^c$ is not in the spectrum of $x$.
\end{proof}

We will finally need another standard fact of spectral theory  to compute the form $\E_{\alpha}(x,y)=\langle \Delta^{\alpha}(x\xi_\varphi),(y\xi_\varphi)\rangle$
in the case (we could but won't restrict to) $\alpha=1/3,2/3,1.$ We will use crucially that $M\subset D(\Delta^{1/2})$ in the case above. We give explicit bound for integral formulas in order to get first order continuous logic axioms characterizing those forms.
\begin{lemma}\label{EquationForms} 
For any positive closed densely defined operator as $\Delta$, and $0<\alpha<1, \epsilon>0$, we have \[(\Delta+\epsilon)^{-\alpha}=\frac{\sin(\alpha \pi)}{\pi}\int_0^\infty s^{-\alpha}(\Delta+s+\epsilon)^{-1}ds.\] If moreover $M\subset D(\Delta^{1/2}),$ and if we write $G_s=2e^{s/2}\Delta^{1/2}(\Delta+e^s)^{-1} $, then for $\alpha\in]0,1/2[$ and  any $x,y\in M$, $\beta\in [0,1-\alpha[$ (or if $\alpha+\beta=1$, for $x,y\in D(\Delta^{3/4})$) we have
\begin{equation}\label{formeq}\E_{\alpha+\beta}(x,y)=\frac{\cos(\alpha \pi)}{2\pi}\int_{-\infty}^\infty dt e^{\alpha t}\E_{\beta}(G_{t}(x),y).\end{equation}
Especially, for $\hat{\beta}=\min( 1/2,\beta), \delta=\min(1/2,1-\beta)$, if $\alpha+\beta<1$ we have : \begin{align*}&\left|\E_{\alpha+\beta}(x,y)-\frac{1}{n^2}\frac{\cos(\alpha \pi)}{2\pi}\sum_{k=-n^3}^{n^3-1}e^{\alpha k/n^2}\E_{\beta}(G_{k/n^2}(x),y)\right|\leq (\frac{2e^{-n(\alpha +\hat{\beta})}}{\alpha +\hat{\beta}} +\frac{2e^{-n(\delta-\alpha)}}{\delta-\alpha}\\&+|(1-e^{1/n^2})|\frac{4(1-e^{-n(\alpha +\hat{\beta})})}{\alpha +\hat{\beta}} +|(1-e^{1/n^2})|\frac{2(1+e^{(\delta-\alpha)/n^2})(1-e^{-n(\delta-\alpha)})}{\delta-\alpha})\frac{||x||_\varphi^\#||y||_\varphi^\#}{2\pi} \end{align*}
and for $\alpha+\beta=1$, we have
\begin{align*}&\left|\E_{1}(x,y)-\frac{1}{n^2}\frac{\cos(\alpha \pi)}{2\pi}\sum_{k=-n^3}^{n^3-1}e^{\alpha k/n^2}\E_{\beta}(G_{k/n^2}(x),y)\right|\leq (\frac{2e^{-n(\alpha +1/2)}}{\alpha +1/2} +\frac{2e^{-n(1/2-\alpha)}}{1/2-\alpha}\\&+|(1-e^{1/n^2})|\frac{4(1-e^{-n(\alpha +1/2)})}{\alpha +1/2} +|(1-e^{1/n^2})|\frac{2(1+e^{(1/2-\alpha)/n^2})(1-e^{-n(1/2-\alpha)})}{1/2-\alpha})\\&\ \ \ \ \ \ \ \ \ \ \ \ \ \times \frac{\sqrt{||x||_\varphi^2+||\Delta^{3/4}(x\xi_\varphi)||_\varphi^2}\sqrt{||y||_\varphi^2+||\Delta^{3/4}(y\xi_\varphi)||_\varphi^2}}{2\pi}. \end{align*}

\end{lemma}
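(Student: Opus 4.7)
The plan proceeds from the scalar fractional-power identities up through the operator identity and the quantitative Riemann-sum estimate. For $\lambda>0$ and $0<\alpha<1$, the substitution $s=\lambda u$ gives $\int_0^\infty s^{-\alpha}(\lambda+s)^{-1}ds = \lambda^{-\alpha}B(1-\alpha,\alpha)=\pi\lambda^{-\alpha}/\sin(\alpha\pi)$; applying the Borel functional calculus to the positive selfadjoint operator $\Delta+\epsilon$ yields the first identity of the lemma. A shifted variant, valid for $0<\alpha<1/2$ (using $B(\alpha+1/2,1/2-\alpha)=\pi/\cos\alpha\pi$ from the reflection formula), gives the pointwise identity $\lambda^{\alpha-1/2}=(\cos\alpha\pi/\pi)\int_0^\infty s^{\alpha-1/2}(\lambda+s)^{-1}ds$. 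Substituting $s=e^t$ and multiplying by $\lambda^{\beta+1/2}$ produces the pointwise spectral identity $\lambda^{\alpha+\beta}=(\cos\alpha\pi/2\pi)\int_{\R}e^{\alpha t}\bigl(2e^{t/2}\lambda^{\beta+1/2}(\lambda+e^t)^{-1}\bigr)dt$, in which the bracketed kernel is precisely the spectral function of $\Delta^\beta G_t$. Integrating this against the complex spectral measure $\mu_{x,y}(\cdot)=\langle E_\Delta(\cdot)x\xi_\varphi,y\xi_\varphi\rangle$ and swapping the spectral and $dt$ integrals by Fubini gives (\ref{formeq}). The Fubini swap is justified by Tonelli: the Cauchy--Schwarz bound $\int f\,d|\mu_{x,y}|\leq\sqrt{\int f\,d\mu_x}\sqrt{\int f\,d\mu_y}$ for $f\geq 0$ reduces the positive double integral to $(2\pi/\cos\alpha\pi)\int\lambda^{\alpha+\beta}d|\mu_{x,y}|$, which is finite since $\lambda^{\alpha+\beta}\leq 1+\lambda$ and $M\subset D(\Delta^{1/2})$ yields $\int\lambda\,d\mu_x\leq\|x^*\|_\varphi^2<\infty$, and analogously for the endpoint $\alpha+\beta=1$.

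For the quantitative estimate, let $\phi(t)=e^{\alpha t}\E_\beta(G_t(x),y)$ and decompose the error as the tail $\int_{|t|\geq n}\phi$ plus the Riemann defect on $[-n,n]$. For the tail, bound $|\phi(t)|$ from the spectral representation using the interpolation $\lambda^a/(\lambda+e^t)\leq \min(\lambda^{a-1},\lambda^{a}e^{-t})$, with $a$ tuned differently on the two half-lines so that, after the same Cauchy--Schwarz bound on $|\mu_{x,y}|$, $|\phi(t)|$ is dominated by $\|x\|_\varphi^\#\|y\|_\varphi^\#$ times $e^{(\alpha+\hat\beta)t}$ for $t<0$ and $e^{-(\delta-\alpha)t}$ for $t>0$, where $\hat\beta=\min(1/2,\beta)$ and $\delta=\min(1/2,1-\beta)$ are chosen precisely so that the residual spectral weight is $\leq 1+\lambda$. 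Integrating these exponentials over $|t|\geq n$ yields the first two displayed error terms. For the defect on each subinterval of length $1/n^2$, decompose $\phi(t)-\phi(k/n^2)=(e^{\alpha t}-e^{\alpha k/n^2})\E_\beta(G_t(x),y)+e^{\alpha k/n^2}\E_\beta((G_t-G_{k/n^2})(x),y)$ and factor out $|1-e^{1/n^2}|$, which uniformly bounds both $|1-e^{\alpha/n^2}|$ and the spectral continuity factor $|e^{t/2}/(\lambda+e^t)-e^{s/2}/(\lambda+e^s)|$ at scale $1/n^2$; then reuse the tail estimate and sum over $k$ to recover the last two displayed terms.

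The main obstacle is the spectral bookkeeping in the tail: the interpolation of $\lambda^a/(\lambda+e^t)$ must be chosen with different $a$ on the two half-lines to produce the precise exponents $\alpha+\hat\beta$ and $\delta-\alpha$ while leaving a residual spectral weight bounded by $1+\lambda$. The endpoint $\alpha+\beta=1$ is genuinely marginal: there $\delta-\alpha=0$ and no decay is available for $t\to+\infty$, so one must use the stronger bound with $\delta=1/2$, which requires $\int\lambda^{3/2}d\mu_x<\infty$, i.e.\ $x,y\in D(\Delta^{3/4})$; this precisely replaces $\|x\|_\varphi^\#$ by $\sqrt{\|x\|_\varphi^2+\|\Delta^{3/4}x\xi_\varphi\|^2}$ in the stated bound. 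Everything else is mechanical once the scalar identities are in hand.
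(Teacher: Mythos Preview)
Your proposal is correct and follows essentially the same route as the paper. Both derive \eqref{formeq} from the scalar identity $\lambda^{\alpha-1/2}=\frac{\cos(\alpha\pi)}{\pi}\int_0^\infty s^{\alpha-1/2}(\lambda+s)^{-1}ds$ after the change of variable $s=e^t$, and both control the Riemann-sum error by bounding $|\phi(t)|=|e^{\alpha t}\E_\beta(G_t(x),y)|$ via the four case-split estimates $\lambda^a/(\lambda+e^t)\leq\min(\lambda^{a-1},e^{(a-1)t})$ with $a$ adjusted on each half-line to hit the exponents $\alpha+\hat\beta$ and $\delta-\alpha$.

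The two stylistic differences are worth noting. First, for \eqref{formeq} the paper regularises: it writes $(\Delta+\epsilon)^{\alpha-1/2}$ via the first integral formula, pairs with $\E_{\beta+1/2}(\cdot,y)$, then sends $\epsilon\to 0$ by dominated convergence (using the pointwise bounds already established). Your direct Fubini/Tonelli argument against the complex spectral measure $\mu_{x,y}$, with the Cauchy--Schwarz reduction of $\int f\,d|\mu_{x,y}|$ to the positive measures $\mu_x,\mu_y$, is equivalent and slightly cleaner. Second, for the local defect on each subinterval the paper uses the resolvent identity to write $e^{\alpha t}G_t-e^{\alpha s}G_s$ as a sum of two terms each carrying the factor $(1-e^{t-s})$ and each bounded like $|\phi|$; your additive split $(e^{\alpha t}-e^{\alpha s})\E_\beta(G_t x,y)+e^{\alpha s}\E_\beta((G_t-G_s)x,y)$ reaches the same place once you note that $G_t-G_s$ itself satisfies a resolvent-type identity giving back a $(1-e^{t-s})$ factor times a bounded resolvent. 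The paper's version makes the asymmetric constants $4$ versus $2(1+e^{(\delta-\alpha)/n^2})$ in the final display drop out more transparently, but neither proof spells out that bookkeeping in full.
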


\begin{proof}
The first result is well-known, see e.g. \cite[Rmk V.3.50]{Kato},  the integral is absolutely converging in bounded operators since $||s^{-\alpha}(\Delta+s+\epsilon)^{-1}||\leq s^{-\alpha}(s+\epsilon)^{-1}$. 
What remains to prove is a definition as form and is standard e.g. similar to the arguments in Kato's book. We first show the integrals are absolutely converging. Note that $||(\Delta+e^s)^{-1}||\leq e^{-s},  ||\Delta(\Delta+e^s)^{-1}||=||1-e^s(\Delta+e^s)^{-1}||\leq 1$, (and for $\beta\leq 1/2$ $||\Delta^{\beta}(y\xi_\varphi)||^2\leq ||\Delta^{1/2}(y\xi_\varphi)||^2+||y\xi_\varphi||^2$ since $\Delta^{2\beta}\leq 1+\Delta$)
. Note also that by definition $|e^{\alpha t}\E_{\beta}(G_{t}(x),y)|=2e^{(\alpha+1/2)t}|\langle \Delta^{\beta+1/2}(\Delta+e^t)^{-1}(x\xi_\varphi),(y\xi_\varphi)\rangle|$  so that one gets
\[|e^{\alpha t}\E_{\beta}(G_{t}(x),y)|\leq 2e^{(\alpha+1/2)t}e^{(\beta-3/2)t}\|\Delta^{1/2}(x\xi_\varphi)\|\|\Delta^{1/2}(y\xi_\varphi)\|\ \ \ \mathrm{if} \ 1-\alpha>\beta>1/2,t>0,\]
\[|e^{\alpha t}\E_{\beta}(G_{t}(x),y)|\leq 2e^{(\alpha-1/2)t}\|\Delta^{1/2}(x\xi_\varphi)\|\|\Delta^{\beta}(y\xi_\varphi)\|\ \ \ \mathrm{if} \ \beta\leq1/2,t>0,\]
\[|e^{\alpha t}\E_{\beta}(G_{t}(x),y)|\leq 2e^{(\alpha+1/2)t}\|\Delta^{\beta-1/2}(x\xi_\varphi)\|\|y\xi_\varphi\|\ \ \ \mathrm{if} \  \beta>1/2,t<0,\]
\[|e^{\alpha t}\E_{\beta}(G_{t}(x),y)|\leq 2e^{(\alpha+1/2)t}e^{(\beta-1/2)t}\|(x\xi_\varphi)\|\|(y\xi_\varphi)\|\ \ \ \mathrm{if} \ \beta\leq1/2,t<0.\]
This gives the expected integrability in all cases but the case $\alpha+\beta=1$ in which case we use a bound similar to the second bound (since for $\beta\leq 1$, $||\Delta^{\beta/2+1/4}(y\xi_\varphi)||^2\leq ||\Delta^{3/4}(y\xi_\varphi)||^2+||y\xi_\varphi||^2$ using $\Delta^{\beta+1/2}\leq 1+\Delta^{3/2}$):
\[|e^{\alpha t}\E_{\beta}(G_{t}(x),y)|\leq 2e^{(\alpha-1/2)t}\|\Delta^{\beta/2+1/4}(x\xi_\varphi)\|\|\Delta^{\beta/2+1/4}(y\xi_\varphi)\|\ \ \ \mathrm{if} \ 1-\alpha=\beta>1/2,t>0\]

Applying our first formula to $\gamma=1/2-\alpha$ instead of $\alpha$ and with a change of variable $s=e^u$, one gets 
\[(\Delta+\epsilon)^{\alpha-1/2}=\frac{\cos(\alpha \pi)}{\pi}\int_{-\infty}^\infty du e^{(\alpha+1/2) u}(\Delta+e^u+\epsilon)^{-1}\] so that, first on the domain of $\Delta^{1/4+\beta/2}$ and then on $M$ by density and with $h(u,\epsilon)=\ln(e^u+\epsilon)$, we obtain:
\[\E_{\beta+1/2}((\Delta+\epsilon)^{\alpha-1/2}x,y)=\frac{\cos(\alpha \pi)}{\pi}\int_{-\infty}^\infty du e^{(\alpha+1/2) u}e^{-h(u,\epsilon)/2}\E_{\beta}(G_{h(u,\epsilon)}(x\xi_\varphi),(y\xi_\varphi))\]
By dominated convergence theorem with bounds similar to those above (and for $a,\epsilon\geq 0$, $(e^{-t}\epsilon+1)^{-a}\leq 1$), we obtain the result at the limit $\epsilon\to 0.$

By the resolvent equation we have \begin{align*}e^{\alpha t}G_{t}(x)-e^{\alpha k/n^2}G_{k/n^2}(x)&=2e^{(1/2+\alpha)t}\Delta^{1/2}(\Delta+e^{t})^{-1}(1-e^{t-k/n^2})e^{k/n^2}(\Delta+e^{k/n^2})^{-1}(x)\\&+2(e^{(1/2+\alpha)(t-k/n^2)}-1)e^{(1/2+\alpha)k/n^2}\Delta^{1/2}(\Delta+e^{k/n^2})^{-1}(x)\end{align*} so that for $t\geq k/n^2$ (note $\alpha+1/2\leq 1)$
\begin{align*}|e^{\alpha t}&\E_{\beta}(G_{t}(x),y)-e^{\alpha k/n^2}\E_{\beta}(G_{k/n^2}(x),y)|\\&\leq |(1-e^{(t-k/n^2)})|\ \left(|e^{\alpha t}\E_{\beta}(G_{t}(e^{k/n^2}(\Delta+e^{k/n^2})^{-1}(x),y)|+|e^{\alpha k/n^2}\E_{\beta}(G_{k/n^2}(x),y)|\right)\end{align*}
The two 
 inequalities follow by standard Riemann integration. 



\end{proof}

 \subsection{Axiomatization}
Recall we have one sort $U$ with domains of quantification $D_n$ for the operator norm ball of radius $n$ of $M$. 
The language will be composed of 
\begin{itemize}
\item[$\bullet$]The constant 0 which will be in $D_1$. 
\item[$\bullet$] For every $\lambda\in \C$ a unary function symbol also denoted $\lambda$ to be interpreted as scalar
multiplication. For simplicity we shall write $\lambda x$ instead of $\lambda (x)$.
\item[$\bullet$] A unary function symbol $*$ for involution on $U$, leaving stable all domains.
\item[$\bullet$] A binary function symbol $+:D_n\times D_m\to D_{n+m}$. and for $K,L\in \N^*$ $m_{K,L}:D_{n}\times D_{m}\to D_{nm}$ (interpreted as $F_K^\varphi(.)F_L^\varphi(.)$ with modulus of continuity as obtained in lemma \ref{ContinuityProduct}).
\item[$\bullet$]The constant 1 in $D_1$.
\item[$\bullet$] Two unary relation symbols $\varphi_r$ and $\varphi_i$ for the real and imaginary parts of the state $\varphi$, on $U$. We will often just write $\varphi$ and assume that the expression can be decomposed
into the real and imaginary parts.
\item[$\bullet$] For each $t\in \Q$, unary function symbols $\sigma_t:D_n\to D_n$ (for the modular group), $G_t:D_n\to D_n$ (for $G_t^\varphi$) and for $(m,l)\in\Q^*,m> 0$ $F_{m,l}:D_n\to  D_n, F_{N,0}=F_N$ (for Fejer's map $F_{m,l}^\varphi$).$H_{K}=(K+1)F_{K+1}-KF_{K}.$
\item[$\bullet$] A function symbol $\tau_{p,\lambda,N}$ (meaning $p(\sum_{i=1}^n\lambda_iF_{N_i}(x))$) for every $*$-polynomial in one variable $p$, any $N=(N_1,...,N_n)\in (\N^*)^n$, any $\lambda=(\lambda_1,...\lambda_n)\in(\Q\cap[0,1])^n \sum\lambda_i=1$. If we write \[m(n,p)=\lceil\sup \{||p(a)||, a\in C, C \ C^*-\mathrm{algebra\ and}\ ||a||\leq n\}\rceil,\] then we require $\tau_{p,\lambda,N}:D_{n}\to D_{m(n,p)}$ and we want it to have same modulus of continuity as $p^.(\sum_{i=1}^n\lambda_iF_{N_i}(.))$ (we will use this as notation for products obtained with maps $m_{(N,K)}$ in a way we will explain bellow).
\item[$\bullet$] For each $\alpha\in\Q\cap [0,1[,$ binary relation symbols $\E_{\alpha,N,M}$ on $D_{n}$ (as for $\varphi$ above, formally decomposed into imaginary and real part and meaning $\E_{\alpha}(F_N^\varphi(.),F_M^\varphi(.))$.
\end{itemize}
 $H_{K}$ is inspired by the so-called \emph{De la vallée poussin Kernel}  $h_{K}=(K+1)f_{K+1}-Kf_{K}$ with Fourier transform \[\widehat{h_{K}}(t)
=\max(0,\min(K+1-|t|,1)).\]
which is equal to $1$ on $[-K,K]$ and with support in $[-K-1,K+1]$. Thus $H_{K}^\varphi$ is identity on $M(\sigma^\varphi,[-K+\epsilon,K-\epsilon]),\epsilon>0$ and we will use that to encode rough analogues of relations of spectrum and product and also associativity. We will also use as a shorter notation for what is supposed to be $H_{K+1}^\varphi(a).H_{L+1}^\varphi(b)$ namely:
\begin{align*}M_{(K,L)}(a,b)&=(K+1)(L+1)m_{(K+1,L+1)}(a,b)+(K+2)(L+2)m_{(K+2,L+2)}(a,b)\\&-(K+1)(L+2)m_{(K+1,L+2)}(a,b)-(K+2)(L+1)m_{(K+2,L+1)}(a,b).\end{align*}
With this notation we can define for a monomial: $p=x^{\epsilon_1}...x^{\epsilon_k}, \epsilon_i\in\{1,*\}$ the expression used above for $\lambda_i\in[0,1]$ and then extend by linearity to a more general polynomial \begin{align*}p^.(&\sum_{i=1}^n\lambda_iF_{N_i}(x))=\sum_{i_1,...,i_k=1}^n \lambda_{i_1}...\lambda_{i_k} \\&M_{N_{i_1},N_{i_{2}}+...+N_{i_k}}(F_{N_{i_{1}}}(x^{\epsilon_{1}}),...M_{N_{i_{k-2}},N_{i_{k-1}}+N_{i_k}}(F_{N_{i_{k-2}}}(x^{\epsilon_{k-2}}),m_{N_{i_{k-1}},N_{i_k}}(x^{\epsilon_{k-1}},x^{\epsilon_{k}})...)\end{align*}
This will correspond in the step 2 of the next theorem to a product in a well-defined associative product.

Let us comment on this language from a model theoretic viewpoint. First, it is easy to see that all the data depends only on $(M,\varphi)$, the pair of a von Neumann algebra and a faithful normal state. Indeed, it is well known that this is the case for $\sigma_t^\varphi$ that is fixed by $t,\varphi$ and all the other data as been defined from the modular group, the product, adjoint and the state in the previous subsection. More precisely, the modular group is the standard  way to encode the unbounded operator $\Delta$ which is defined only at Hilbert space level on $L^2(M,\varphi)$ (as unbounded operator) and thus does not fit well with the model theoretic setting. This unbounded operator can only appear as a sesquilinear form giving the relation $\E_1(x,y)=\langle \Delta^\varphi(x\xi_\varphi),y\xi_\varphi\rangle$. As already mentioned in the introduction, we won't use the KMS condition to check that the automorphism group we put in the theory is indeed the modular group, we will rather use spectral theory that will give explicit formulas that already appeared in the previous section and which are better suited for model theory. For, in step 4 of the proof bellow, we will compute the form corresponding to the modular group given as relation on $M^2$ $Q_t(x,y)=\varphi(\sigma_t(x^*)y)$ which of course determines by density $\sigma_t$ and its Hilbert space extension $\Delta_\sigma^{it}$ uniquely computed from and determining the generator $\Delta_\sigma$ by functional calculus. We will then compute the form $q_\sigma(x,y)=\langle \Delta_\sigma(x\xi_\varphi),y\xi_\varphi\rangle$ (smeared by some $F_K$) and thus deduce that it has the expected value $q_\sigma=\E_1$ so that the (exponentiated) generator $\Delta_\sigma=\Delta^\varphi$ and thus the modular group that encodes it in the theory will also be equal to its expected value $\sigma_t=\sigma_t^\varphi.$ We will see in subsection 1.4 that all this data (including the modular group) is not strictly speaking necessary in a theory of $\sigma$-finite $W^*$ probability spaces. It enables us to get a universal axiomatization (some of the data as the forms $\mathscr{E}_\alpha$ are even only here to get short and readable enough axioms). In order to remove this main extra piece of data, the modular group, we will need some more technical but standard spectral theory. Since having an explicit universal axiomatization is interesting in its own right, we thus postpone the quest of minimality in the language to this supplementary subsection 1.4.

Our models will thus be models of $\sigma$-finite von Neumann algebras having such a faithful normal state \cite[Prop 3.19]{TakesakiBook} or more precisely of $\sigma$-finite $W^*$ probability spaces since the theory will depend on the state $\varphi$ in a non-trivial way.

As in \cite{FarahII}, we now write down axioms satisfied by any $\sigma$-finite  $W^*$ probability space (either obvious or coming from the preliminary subsection 1.1) :
\begin{enumerate}\item[(1)] $x+(y +z) = (x+y)+z, x+0 = x, x+(-x) = 0$ (where $-x$ is the scalar $-1$ acting
on $x$), $x + y = y + x, \lambda(\mu x) = (\lambda \mu)x, \lambda(x + y) = \lambda x + \lambda y, (\lambda + \mu)x = \lambda x + \mu x$, $1(x)=x$. 
\item[(2)] For $K,L,K_1,K_2,K_3\in \N^*$ $\lambda m_{K,L}(x,y)+ m_{K,L}(x,z)=m_{K,L}(x,\lambda y+z),$
\[m_{K_1,L}(F_{K_2}(x),y)=m_{K_2,L}(F_{K_1}(x),y),\]
\[H_{K+L}(F_K(x))=F_K(x),\]
\[H_{K_1+K_2+L}(m_{K_1,K_2}(x_1,x_2))=
m_{K_1,K_2}(x_1,x_2),\]
\begin{align*}&(K_1+K_2+2)m_{K_1+K_2+2,K_3}(m_{K_1,K_2}(x_1,x_2),x_3)-(K_1+K_2+1)m_{K_1+K_2+1,K_3}(m_{K_1,K_2}(x_1,x_2),x_3)\\&=(K_3+K_2+2)m_{K_1,K_2+2+K_3}(x_1,m_{K_2,K_3}(x_2,x_3))-(K_3+K_2+1)m_{K_1,K_2+1+K_3}(x_1,m_{K_2,K_3}(x_2,x_3)).\end{align*}
\item[(3)] $(x^*)^* = x, (x + y)^* = x^* + y^*, (\lambda x)^* = \overline{\lambda}x^*$ . 
\item[(4)] For $K,L\in \N^*$, $[m_{K,L}(x,y)]^*=m_{(L,K)}(y^*,x^*)$, $F_N(x^*)=[F_N(x)]^*$. $d_U(x,0)=d_U(x^*,0).$
\item[(5)] $d_U(x, y) = d_U(x - y, 0)$,  we write $||x||_\varphi^*=d_U(x,0).$
\item[(6)] For $1$ the constant symbol $1\in D_{1},$ $F_N(1)=1$,  $m_{K,N}(1,x)=F_N(x)=m_{N,K}(x,1)$.  
\item[(7)] $\varphi(x + y) = \varphi(x) + \varphi(y)$. 
\item[(8)] $\varphi(x^*) = \overline{\varphi(x)}$, $\varphi(\lambda x) = \lambda  \varphi(x)$, $\varphi(1) = 1$.
\item[(9)] $\max(0, -\sum_{i,j=1}^n\overline{\lambda_i}\lambda_j\varphi(m_{(K_i,K_j)}(x_i^*,x_j))=0.$
\item[(10)] For every $n,m,K,K_1,...,K_n \in \N^*$,
\begin{align*}&\sup_{a\in D_{n}}\sup_{x_j\in D_m}\max(0,\sum_{i,j=1}^n\overline{\lambda_i}\lambda_j[\varphi(M_{(K+K_i,K+K_j)}([m_{K,K_j}(a,x_i)]^*,m_{K,K_j}(a,x_j))-n^2m_{(K_i,K_j)}(x_i^*,x_j))]) =0\end{align*}
\item[(11)]$\tau_{p,\lambda,N}(x) = p^.(\sum_{i=1}^n\lambda_iF_{N_i}(x))$ for every $*$-polynomial $p$ in one variable $x$, $\lambda_i\in\Q\cap [0,1] with \sum_{i=1}^n\lambda_i=1$, $N=(N_1,...,N_n)\in (\N^*)^n$.
\item[(12)] For $K,K_i,m,l\in (\N^*)$, \[\sup_{x\in D_m}\sup_{y_{i}\in D_{l}}\max(0,\left\|F_{K}(x)+\sum_{i=1}^nm_{K_{i},K}(y_{i},x)\right\|_\varphi^*-3me^K\left\|1+\sum_{i=1}^nF_{K_{i}}(y_{i})\right\|_\varphi^*).\]
\end{enumerate}

These axioms are really similar to \cite{FarahII}. The next-to-last axiom mimic the trick they found to identify the operator norm unit ball in a universal axiomatization (rather than a $\forall\exists$ one).  
We now need to specify the metric to coincide with the Ocneanu ultraproduct and to deal with the modular theory. 
We of course find our inspiration in our previous section and require first the modular group relations (including the continuity obtained in lemma \ref{ContinuityModular}):
\begin{enumerate}
\item[(13)]$\sigma_t(\sigma_s(x))=\sigma_{t+s}(x),\sigma_t(\lambda x+y)=\lambda \sigma_t(x)+\sigma_t(y),\sigma_0(x)=x,\sigma_t(x^*)=(\sigma_t(x))^*,\varphi(\sigma_t(x))=\varphi(x)$.
\item[(14)]$\sigma_t([m_{K,L}(x,y)])=[m_{K,L}(\sigma_{t}(x),\sigma_{t}(y))]$, $\sigma_t(F_N(x))=F_N(\sigma_t(x))$.
\item[(15)]For every $n \in \N$,
\[\sup_{x\in D_{n}}\max(0, d_U(\sigma_t(x),x) -4tn)=0.\]
\end{enumerate}

We also need the relations between $\sigma_t,G_t,F_m$ from lemma \ref{NormG} and lemma \ref{ContinuityModular}:
\begin{enumerate}
\item[(16)]For $s\in \Q,m\in\N$  \[\sup_{x\in D_{m}}\max(0,d_U\left(G_s(x),\frac{1}{n^2}\sum_{k=-n^3}^{n^3-1}\frac{2e^{-is\frac{k}{n^2}}}{e^{\pi \frac{k}{n^2}}+e^{-\pi \frac{k}{n^2}}}\sigma_{k/n^2}(x)\right)- \frac{8e^{-\pi n}m}{\pi}-\frac{4m}{n^2}-\frac{4(\pi+s)m}{n})=0.\] 
\item[(17)] For $\lambda_i\in \C,K_i\in \N^*$ \[4(||\sum_{i=1}^n\lambda_iF_{K_i}(x_i)||_\varphi^*)^2=\sum_{i,j=1}^n\overline{\lambda_i}\lambda_j\varphi(m_{K_i,K_j}(G_0(x_i)^*,G_0(x_j)))+\varphi(m_{K_j,K_i}(G_0(x_j),G_0(x_i)^*)).\]
\item[(18)]For $N\in\Q\cap]0,\infty[, l\in\Q^*,m\in\N^*, K,L\in\N, L\geq 2K$  \begin{align*}&\sup_{x\in D_{m}}\max(0,-\frac{4m}{n^2}-\frac{16m}{\pi Nn^3}-\frac{|l|Nm}{n\pi}-\frac{2mN^2}{\pi n}+\\&d_U\left(F_{N,l}(x),\frac{N}{2\pi n^2}x+\frac{1}{n^2}\sum_{k=-n^3,k\neq0}^{n^3-1}e^{il\frac{k}{n^2}}\frac{1-\cos(N\frac{k}{n^2})}{\pi N \frac{k^2}{n^4}}\sigma_{k/n^2}(x)\right))=0,\end{align*}
\end{enumerate}
We finally have the relations defining our forms from lemma \ref{EquationForms}
\begin{enumerate}
\item[(19)]$\E_{0,K,L}(x,y)=\varphi(m_{K,L}(x^*,y))$ and for $\alpha,\beta\in \Q\cap[0,1[,0<\alpha<1/2,\alpha+\beta<1,\epsilon= \min(1/2-\alpha,1-\beta-\alpha),\delta= \min(\epsilon,\alpha) ,m,K,L,n\in\N^*$  \begin{align*}\sup_{(x,y)\in D_{m}^2}\max(0&,\left|\E_{\alpha+\beta,K,L}(x,y)-\frac{1}{n^2}\frac{\cos(\alpha \pi)}{2\pi}\sum_{k=-n^3}^{n^3-1}e^{\alpha k/n^2}\E_{\beta,K,L}(G_{k/n^2}(x),y)\right|\\&- \frac{4e^{-n\delta}m^2}{\pi\delta}-|(1-e^{1/n^2})|\frac{2(3+e^{\epsilon/n^2})m^2}{\pi\delta})=0 \end{align*}
\item[(20)]For $\alpha,\beta\in \Q\cap[0,1[,0<\alpha<1/2,\mu=1/2-\alpha,m,K,L,n\in\N^*, L\leq K$, for $\alpha+\beta=1$, we have
\begin{align*}\sup_{(x,y)\in D_{m}^2}\max(0&,\left|\varphi(m_{L,K}(y,x^*))-\frac{1}{n^2}\frac{\cos(\alpha \pi)}{2\pi}\sum_{k=-n^3}^{n^3-1}e^{\alpha k/n^2}\E_{\beta,K,L}(G_{k/n^2}(x),y)\right|\\&-\frac{2e^{-n\mu}(1+e^{3K/2})m^2}{\pi\mu}-|(1-e^{1/n^2})|\frac{(3+e^{\mu/n^2})(1+e^{3K/2})m^2}{\pi\mu})=0 \end{align*}
\end{enumerate}

Recall that a (structure) model of a theory will be a metric space 
 with each domain of quantification (for us balls) complete in the metric and with all the symbols having the specified uniform continuity functions. Recall that an axiomatization of a category $\mathcal{C}$ will be as in \cite{FarahII}, a functor $\mathcal{M}$ from $\mathcal{C}$ to models of the theory $T$ such that $\mathcal{M}(A)$ is determined up to isomorphism for any $A\in \mathcal{C}$, for any model $M$ of $T$ there is $A\in \mathcal{C}$ such that $M$ is isomorphic to $\mathcal{M}(A)$ and for every $A,B\in \mathcal{C}$, there is a bijection $Hom_{\mathcal{C}}(A,B)\simeq Hom(\mathcal{M}(A),\mathcal{M}(B)).$
 
The category of ($\sigma$-finite) $W^*$ probability spaces may not have the most expected morphisms. We will consider as morphism only those state preserving $*$-homomorphisms having an image admitting a state preserving conditional expectation. Recall that by $\sigma$-finite $W^*$ probability spaces, we mean a pair $(M,\varphi)$ of a  $\sigma$-finite von Neumann algebra $M$ having a fixed faithful normal state $\varphi$. Since we put in the structure the modular group and we want our morphism to correspond to model-theoretic morphisms and thus commute with the modular group, the image of a morphism will thus be left invariant by the modular group of the target  state, and by a result of Takesaki (cf e.g. \cite[Th IX.4.2]{TakesakiBook}) this is equivalent to the existence of such a conditional expectation. The above category thus seems a nice semantic formulation of our only choice.
 
We are ready to obtain our axiomatization with a compatibility with ultraproducts.

\begin{theorem}\label{OcneanuTh}
The class of $\sigma$-finite $W^*$ probability spaces (with morphisms as described above) is  axiomatizable by the Ocneanu theory $T_{\sigma W^*}$, theory consisting of axioms (1)-(20) above. Moreover, if $(M_n,\varphi_n)$ are  $W^*$ probability spaces of this type, then for any non-principal ultrafilter $\omega$, the model of the Ocneanu ultraproduct of \cite{HaagerupAndo} is given by the model-theoretic ultraproduct :\[\mathcal{M}((M_n,\varphi_n)^\omega)=[\mathcal{M}(M_n,\varphi_n)]^\omega.\] 
\end{theorem}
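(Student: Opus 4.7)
The plan is to establish both the axiomatization claim and the ultraproduct identification. The forward direction---that every $\sigma$-finite $W^*$-probability space $(M,\varphi)$ satisfies axioms (1)--(20)---is essentially bookkeeping using the preliminary subsection: (1)--(4) and (7)--(8) are immediate from the $*$-algebra structure and positivity of $\varphi$; (6) holds because $\widehat{f_N}(0)=1$ gives $F_N^\varphi(1)=1$; (9)--(10) encode positivity of $\varphi$ together with the Cauchy--Schwarz estimate on the range of the $F_K^\varphi$; (11) is the definitional convention for $\tau_{p,\lambda,N}$; (12) is the universal version of the operator-norm bound inspired by \cite{FarahII}, used here to characterize $D_n$; (13)--(15) come from Tomita's theorem together with Lemma \ref{ContinuityModular}; (16) and (18) are the Riemann-sum approximations in Lemma \ref{ContinuityModular}; (17) is Lemma \ref{NormG}; and (19)--(20) are the integral formulas of Lemma \ref{EquationForms}.

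For the reverse direction, given a model $\mathcal{N}$ I would first construct, following the remark before the statement, the $*$-algebra $A=\mathrm{span}\{F_N(x):x\in \mathcal{N},\ N\in \N^*\}$ inside $\mathcal{N}$ itself. Axiom (2) makes the bilinear map $m_{K,L}$ well-defined on $A$ independently of the spectral cutoffs (via the relations with $H_{K+L}$), and provides an associative product there; axiom (4) equips $A$ with an involution and unit $1$. Axiom (9) then says that $\varphi$ is a positive linear functional on $A$, so a GNS construction produces a Hilbert space $H$, a cyclic vector $\xi_\varphi$, and a $*$-representation $\pi:A\to B(H)$, with boundedness of $\pi$ ensured by axiom (10). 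Setting $M:=\pi(A)''$ and $\tilde\varphi(\cdot)=\langle \pi(\cdot)\xi_\varphi,\xi_\varphi\rangle$ yields a candidate $\sigma$-finite $W^*$-probability space. The universal bound in axiom (12) lets us identify each $x\in D_n$ with an element of $M$ of operator norm $\leq n$, as the $\|\cdot\|_\varphi^*$-limit of the approximants $F_N(x)$ (completeness of the unit ball in this metric is Lemma \ref{CompleteBalls}); faithfulness of $\tilde\varphi$ on $M$ is then forced by the definition of $\|\cdot\|_\varphi^*$ through axiom (17).

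The main obstacle, as flagged in the discussion preceding the theorem, is to verify that the modular group symbol $\sigma_t$ of $\mathcal{N}$ coincides with the actual modular group $\sigma_t^{\tilde\varphi}$ of the constructed pair $(M,\tilde\varphi)$. Axioms (13)--(15) first extend $\sigma_t$ to a strongly continuous one-parameter unitary group $\Delta_\sigma^{it}$ on $H$ preserving $\tilde\varphi$ and leaving $M$ globally invariant. Axiom (17), combined with Lemma \ref{NormG}, forces the metric $\|\cdot\|_\varphi^*$ to be the one attached to $\Delta_\sigma$, and axiom (16) pins down $G_s$ accordingly. Axioms (19)--(20) together then express the forms $\mathcal{E}_\alpha$, smeared by $F_K$, as $\langle \Delta_\sigma^{\alpha}(F_K\cdot\xi_\varphi),F_L\cdot\xi_\varphi\rangle$ for all rational $\alpha\in[0,1[$; the limiting case $\alpha+\beta=1$ in axiom (20) identifies $\mathcal{E}_1^\sigma$ as the full form $\langle \Delta_\sigma\cdot,\cdot\rangle$ on the dense set $F_K(M)\xi_\varphi$ of vectors in the domain of $\Delta^{3/4}$. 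Since a positive self-adjoint operator is determined by its quadratic form on a core, this forces $\Delta_\sigma=\Delta^{\tilde\varphi}$, hence $\sigma_t=\sigma_t^{\tilde\varphi}$. Essential surjectivity and fullness of the functor $\mathcal{M}$ then follow; the morphism part uses Takesaki's theorem (\cite[Th IX.4.2]{TakesakiBook}) that modular-group-invariant unital $*$-subalgebras are exactly the ranges of state-preserving conditional expectations, so structure morphisms in the model-theoretic sense correspond bijectively with our chosen morphisms of $W^*$-probability spaces.

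For the ultraproduct statement, the model-theoretic ultraproduct is by construction the quotient of bounded sequences $(x_n)_{n}\in \prod_n (M_n)_N$ by those with $\lim_\omega \|x_n\|_{\varphi_n}^*=0$, which by Lemma \ref{NormG} is exactly the description of the Ocneanu ultrapower given in \cite[Proposition 3.14]{HaagerupAndo}. All symbols in the language---the state $\varphi$, the modular group $\sigma_t$, the auxiliary maps $G_s$, $F_{m,l}$, and the smeared products $m_{K,L}$ and forms $\mathcal{E}_{\alpha,K,L}$---are defined pointwise and compatible with the ultrafilter limit (for the product this uses Lemma \ref{ContinuityProduct} applied after smearing by $F_K$, which keeps us inside fixed spectral balls and removes the difficulty that general spectral subalgebras are not stable under ultraproduct). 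Hence the ultraproduct built in Ocneanu's sense is a model of $T_{\sigma W^*}$, and the already-established axiomatization identifies it as $\mathcal{M}((M_n,\varphi_n)^\omega)=[\mathcal{M}(M_n,\varphi_n)]^\omega$, using the full result of \cite{HaagerupAndo} only at the end to match our $(M,\tilde\varphi)$ with the standard Ocneanu ultraproduct.
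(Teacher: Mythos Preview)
Your outline tracks the paper's own strategy closely—build $A$, run GNS, extend to $M=A''$, identify the modular group via the form $\mathscr{E}_1$, and match with the Ocneanu construction through \cite[Prop.~3.14]{HaagerupAndo}. There is, however, one genuine misattribution. You write that axiom (2) makes the product well-defined on $A$. Axiom (2) supplies the consistency relation $m_{K_1,L}(F_{K_2}(x),y)=m_{K_2,L}(F_{K_1}(x),y)$ and the associativity law, but it does \emph{not} show that if $\sum_i\lambda_iF_{N_i}(x_i)=0$ in the model then $\sum_i\lambda_i m_{N_i,L}(x_i,y)=0$ for every $y$ and $L$; without this descent condition the product on $A$ is simply not defined. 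That is precisely what axiom (12) is for: the Lipschitz bound $\bigl\|F_K(x)+\sum_i m_{K_i,K}(y_i,x)\bigr\|_\varphi^*\le 3me^K\bigl\|1+\sum_i F_{K_i}(y_i)\bigr\|_\varphi^*$ forces the left-hand side to vanish whenever $1+\sum_i F_{K_i}(y_i)=0$, which (after absorbing the unit) is the required statement. You invoke (12) only later, to pin down the operator-norm balls as in \cite{FarahII}; in the paper it already does essential work at the product-definition stage.

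A second, smaller gap is in the extension from $A$ to $M$. You say each $x\in D_n$ is identified with an element of $M$ of operator norm $\le n$ as the $\|\cdot\|_\varphi^*$-limit of $F_N(x)$, citing completeness. But the metric $d$ does not control operator norms, so a $d$-Cauchy sequence of bounded operators has no reason a priori to converge to a bounded operator on $H$. The paper instead uses (12) once more to get $d$-to-weak continuity of $a\mapsto aF_K(x)\xi_\varphi$ on $A\cap D_n$, extends this to all of $D_n$ by uniform continuity, and then proves the image equals $A''$ via Kaplansky density together with the inequality $4d(x,0)^2\le\|x\xi_\varphi\|^2+\|x^*\xi_\varphi\|^2$, which is derived from (17) by a Cauchy--Schwarz argument on the integral for $G_0$. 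That same inequality is how faithfulness of the representation (and later of $\tilde\varphi$) is obtained; your appeal to ``the definition of $\|\cdot\|_\varphi^*$ through axiom (17)'' hides this nontrivial step.
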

\begin{remark}
The model theoretic ultraproduct thus gives a construction of a von Neumann algebra for which the ultraproduct of modular groups is the modular group of the ultraproduct state. This is the same result as in \cite[Th 4.1]{HaagerupAndo} without any use of the Groh-Raynaud ultraproduct. However, for simplicity, to check that this von Neumann algebra structure coincides with the Ocneanu ultraproduct of \cite{HaagerupAndo}, we will use all their results even those using the Groh-Raynaud ultraproduct. Our construction does not really provide a new proof that the standard Ocneanu construction is a von Neumann algebra with the right modular theory. But it provides an alternative root to the same object and the proof of its main properties without using any non-$\sigma$ finite von Neumann algebra. Note also that of course (1)-(20) means all the axioms from (1) to (20). We would write (1),(20) for singling out the two axioms (1) and (20).
\end{remark}
 
\begin{proof}We already noticed that any $(M,\varphi)$ gives a model of $T_{\sigma W^*}$ using the lemmas of the previous section. Obviously, in using Takesaki's theorem \cite[Th IX.4.2]{TakesakiBook}, a (state preserving) *-homomorphism of von Neumann algebras having a state preserving conditional expectation on its range gives a map between the corresponding models preserving the structure and a fortiori vice versa.
Let us say a supplementary word on that for the reader's convenience. By injectivity of a (state preserving) *-homomorphism of von Neumann algebras, saying that such a morphism preserve the modular group boils down to the easy remark that the modular group of a subalgebra $N$ of $(M,\varphi)$ computed with the restricted state is the restricted modular group, as soon as this is possible, namely if the subalgebra is left invariant by the huge modular group, which is equivalent by  \cite[Th IX.4.2]{TakesakiBook} to the existence of  the stated conditional expectation. For, the restriction of the modular group in this case is an automorphism group and satisfies the KMS modular condition with respect to the restricted state (thus the uniqueness in e.g. \cite[Th VIII.1.2]{TakesakiBook} concludes). 
 We explained in the previous section how the extra-data is computed from the modular group and thus commutation of the morphism and this data is deduced from the one with the modular group. For the converse, the only piece of data that a structure preserving morphism does not preserve by definition is the product, since it only preserves smeared products, but the limiting description of the product obtained bellow from smeared products gives the homomorphism property for the product too. Since the structure preserving morphism preserves the modular group, the converse in \cite[Th IX.4.2]{TakesakiBook} gives existence of a state preserving conditional expectation on its image. We thus  checked from our choice of category and  as expected for a nice axiomatization the bijection $Hom_{\mathcal{C}}(A,B)\simeq Hom(\mathcal{M}(A),\mathcal{M}(B)).$


Assume $M$ satisfies $T_{\sigma W^*}$. We want to see that in the sort $U$, the set $M$ gives a $W^*$ probability space, having the expected modular theory and balls. This is of course the most technical part and we divide this into several steps.

\begin{step}
First properties of modular theoretic maps.
\end{step}

 From (13)-(15) $\sigma_t$ is a continuous one parameter group of linear state preserving maps on $M$.  We can extend by continuity for $d$, $\sigma_t$ to $t\in \R$.
 
 From (16) one then deduces first that $G_s(x)$ is the limit (in metric $d$) of the Riemann sums written, and by the bounds in lemma \ref{ContinuityModular} we thus deduce for any $x\in D_n(M)$ \[G_s(x)=\int_{\R}dt \frac{2e^{-ist}}{e^{\pi t}+e^{-\pi t}}\sigma_t(x)\in D_n(M).\]
 By dominated convergence theorem, one also deduces $G_s$ is strongly continuous in $s$ for $d$ and extends with the same formula to $G_s,s\in\R.$
 
We can now repeat the reasoning with (18) to get that for $N\in \Q^*, N>0,l\in \Q$ for $x\in D_n(M)$:
\[F_{N,l}(x)=\int_{\R}dt f_{N,l}(t)\sigma_t(x)\in D_n(M).\]
We will first use only the case $F_N=F_{N,0},N\in\N^*$. Let us show that \[A:=\mathrm{Vect}\{F_N(x), x\in M, N\in \N^*\}\] is dense in $M$ for $d$ and that even $A\cap D_n(M)$ is dense in $D_n(M).$ 

Note that $x=\int_{\R}dt f_N(t)x$ so that $F_N(x)-x=\int_{\R}dt f_N(t)(\sigma_t(x)-x)$ and thus \[d(F_N(x),x)\leq \int_{\R}dt f_N(t)d(\sigma_t(x),x).\]
Since $f_m$ is an approximation of a Dirac mass $\delta_0$ and  $d(\sigma_t(x),x)$ is continuous with value $0$ at $0$, one deduces 
 $d(F_N(x),x)\to_{N\to \infty} 0$ proving the expected density. Note also that $\sigma_s(F_N(x))=F_N(\sigma_s(x)), G_s(F_N(x))=F_N(G_s(x)), x\in M$ and some other properties written for $F_n$ are actually consequences of the integral formula and those for $\sigma_t$.
 
\begin{step}
Building the algebra structure on $A$ and its faithful representation.
\end{step}

We first build the algebra structure on $A=\mathrm{Vect}\{F_N(x), x\in M, N\in \N^*\},$ which is already stable by adjoint and modular group by (4),(14).

We want to extend by bilinearity the product \[[F_K(x)].[F_L(y)]=m_{K,L}(x,y)=H_{K+L+1}(m_{K,L}(x,y))\in A\] by (2). If $\mathcal{A}$ is the abstract direct sum of $\N^*$ copies of $M$ with $x_n$ the  $n$-th copy of $x\in M$ there is a map of $f:\mathcal{A}\to A$ sending $x_N\mapsto F_N(x)$. Obviously the product $\mathcal{A}\times \mathcal{A}\to A$ defined by $x_K.y_L=m_{K,L}(x,y)$ is well defined and it suffices to see it vanishes on $Ker(f)\times \mathcal{A}+ \mathcal{A}\times Ker(f)$ to see that this induces a well-defined (expected) product on $A$. Axiom (12) (and a symmetric variant insured using adjoints and (4)) exactly guaranties this, since $f(a)=0$ iff $||f(a)||_\varphi^*=0.$

We can now see that $A$ is in this way a $*$-algebra.

Indeed the last part of the axiom (2) can be rewritten \[[H_{K_1+K_2+1}(m_{K_1,K_2}(x_1,x_2))].F_{K_3}(x_3)=F_{K_1}(x_1).[H_{K_3+K_2+1}(m_{K_2,K_3}(x_2,x_3))]\]
But using the definition and the second relation in (2) this is nothing but the associativity relation $([F_{K_1}(x_1)].[F_{K_2}(x_2)])[F_{K_3}(x_3)]=[F_{K_1}(x_1)].([F_{K_2}(x_2)].[F_{K_3}(x_3)]).$ The $*$ algebra relation $[F_{K_1}(x_1)].[F_{K_2}(x_2)])^*= [F_{K_2}(x_2)]^*.[F_{K_1}(x_1)]^*$ is obtained from the first part of (4). Note that similarly, the expression appearing in (10) can be interpreted using the product as :\[M_{(K+K_i,K+K_j)}([m_{K,K_j}(a,x_i)]^*,m_{K,K_j}(a,x_j))=([F_{K}(a)].[F_{K_i}(x_i)])^*.([F_{K}(a)].[F_{K_j}(x_j)]).\]

Moreover, from (7)-(9) $A$ is a complex
pre-Hilbert space with inner product given by $\langle y,x\rangle=\varphi(y^*x)$. Left multiplication by $a \in A$ is a
linear operator on $A$ and axiom (10) guarantees that for $x\in D_{n}(M)$, $F_N(x)$ is bounded of norm less than $n$. The operation $*$ is the
adjoint because for all $x$ and $y$ we have \[\langle ax, y\rangle = \varphi((ax)^*y) = \varphi(x^*a^*y) =  \langle x, a^*y\rangle.\] Thus $A$
is represented (by left multiplication) as a *-algebra of Hilbert space (bounded) operators (on the completion $L^2(A,\varphi)$ with cyclic vector $\xi_\varphi$). 

We now want to check that the representation is faithful in showing that for any $x\in A$:
\begin{equation}\label{MetricStrongInequality}\tag{17b}4d_U(x,0)^2\leq ||x\xi_\varphi||^2+||x^*\xi_\varphi||^2.\end{equation}
We of course use (17). We first note that using the kernel for $G_0$ is a probability and Cauchy-Schwartz inequality : \begin{align*}&\sum_{i,j=1}^n\overline{\lambda_i}\lambda_j\varphi(m_{K_i,K_j}(G_0((x_i)^*),G_0(x_j)))\\&=\int_{\R}dt \frac{2}{e^{\pi t}+e^{-\pi t}}\int_{\R}ds \frac{2}{e^{\pi s}+e^{-\pi s}}\sum_{i,j=1}^n\overline{\lambda_i}\lambda_j\varphi(m_{K_i,K_j}(\sigma_t((x_i)^*),\sigma_s(x_j)))
\\& \leq \sup_{s,t\in \R}|\sum_{i,j=1}^n\overline{\lambda_i}\lambda_j\varphi(m_{K_i,K_j}(\sigma_t((x_i)^*),\sigma_s(x_j)))|\\&\leq \sup_{s,t\in \R}|\sum_{i,j=1}^n\overline{\lambda_i}\lambda_j\varphi(m_{K_i,K_j}(\sigma_t((x_i)^*),\sigma_t(x_j)))|^{1/2}|\sum_{i,j=1}^n\overline{\lambda_i}\lambda_j\varphi(m_{K_i,K_j}(\sigma_s((x_i)^*),\sigma_s(x_j)))|^{1/2}\\&=|\sum_{i,j=1}^n\overline{\lambda_i}\lambda_j\varphi(m_{K_i,K_j}((x_i)^*,x_j))|=\|[\sum_{i=1}^n\lambda_iF_{K_i}(x_i)]\xi_\varphi\|^2\end{align*}
where we finally used (13)-(14). Thus (17b) now follows immediately from (17). Faithfulness also follows, since if the operator $a$ is $0$, we have $||a\xi_\varphi||=||a^*\xi_\varphi||=0$ and thus $d(a,0)=0$, i.e $a=0$ in $A\subset M$ since $M$ is a metric space.  

\begin{step}
Obtaining the von Neumann algebra structure on $M$.
\end{step}

We have now to use the modular theory to be able to represent $M$ (and not only the dense $A$) on $L^2(A,\varphi)$.
Note that (12) gives a Lipschitz bound $||a.F_N(x)||_\varphi^*\leq C_{N,||x||}||a||_\varphi^*$ valid for any $a\in A.$ Thus, by the uniform continuity bound for $\varphi$ in the setting \[|\langle F_K(y)\xi_\varphi,a.F_N(x)\xi_\varphi\rangle|\leq \sqrt{2}||F_K(y)^*.(a.F_N(x))||\leq \sqrt{2}C_{K,||y||}C_{N,||x||}||a||_\varphi^*.\]

Thus, using boundedness of the action of $A$ and density of $A\subset L^2(A,\varphi)$, for any $x\in A$, $a\mapsto ax\xi_\varphi$ is uniformly continuous on $A\cap D_n(M)$ with $d$ to the weak topology (uniform structure). Thus, since $L^2(A,\varphi)$ is complete and Hausdorff for the weak topology and we checked in the first step that $A\cap D_n(M)\subset D_n(M)$, it follows (e.g. \cite[\S 5.4.(4)]{Kothe}) that the map extends to a (uniformly) continuous map $D_n(M)\to L^2(A,\varphi).$ This gives an action of $M$ on $L^2(A,\varphi)$ by bounded operators, $D_n(M)$ acting by operators of norm less than $n$ (this is used to extend the action from $A\xi_\varphi$ to $L^2(A,\varphi)$).
 
 Note that $(F_N(x)-x)\xi_\varphi,(F_N(x)^*-x^*)\xi_\varphi\to 0$  weakly (from the metric to weak continuity), thus passing to convex combination, a usual consequence of Hahn-Banach theorem says a net of convex combinations of $(F_N(x)\xi_\varphi,F_N(x)^*\xi_\varphi,F_N(x))$ converges to $(x\xi_\varphi,x^*\xi_\varphi,x)$ in $L^2(A,\varphi)^2\times M$. Applying the above inequality \eqref{MetricStrongInequality} to this net, one gets for any $x\in M$: \[4d_U(x,0)^2\leq ||x\xi_\varphi||^2+||x^*\xi_\varphi||^2.\]
 We also call $U_n(x)$ the net above of convex combination of $F_N(x)$ that we can even assume to converge in the $*$-strong operator topology to $x$ (replacing $\xi_\varphi$ by $x\xi_\varphi, x\in A$).
 
 The map $i:M\to B(L^2(A,\varphi))$ is thus continuous on balls with the metric $d$ to the weak operator topology. From the inequality above this gives a faithful action. Let us see that $i(M)=A''$ so that the image $i(M)$ will be a von Neumann algebra (isomorphic to $M$ and will especially induce a product extending the one of $A$). The density and metric to weak operator topology continuity gives that the image $i(M)$ is included in the weak closure $A''$. Conversely, by Kaplansky density theorem, take a net $i(a_n)\to a\in A''$ $||a||\leq m$, $a_n\in D_m(M)\cap A$ with convergence in the strong-* operator topology. From the inequality above, $d_U(a_n,a_m)^2\leq ||i(a_n-a_m)\xi_\varphi||^2+||i(a_n-a_m)^*\xi_\varphi||^2$ so that $a_n$ is a Cauchy net for the metric $d$ in $D_m(M)$, thus by completeness of the balls of the model, it converges to $A\in D_m(M)$ and it remains to check $i(A)=a$. But by the metric to weak operator topology convergence $i(a_n)\to i(A)$ in the weak operator topology, and since 
 $i(a_n)\to a$ in this separated topology, this concludes.
 
We thus have a von Neumann algebra structure on $M$ with $j:A\to M$ a state preserving $*$-homomorphism with dense range. Since for $x\in D_m(M)$, there is a net $U_n(x)\in D_m(M)\cap A$ (of convex combinations of $F_N(x)$ as above)  converging $*$-strongly to $x$ so that any commutative polynomial $p(U_N(x))$ has the right norm by (11) and tends to $p(x)$, we deduce that it has the right operator norm. Thus arguing as in \cite{FarahII}, one gets $D_m(M)$ is operator norm $m$ ball of $M$.
 
 \begin{step}
Identifying the modular theory $\sigma_t=\sigma_t^\varphi$.
\end{step}

We now want to identify $\sigma_t=\sigma_t^\varphi.$
First note that $\sigma_t$ is an automorphism of $M$ by extension of the property for $A$ and that $t\mapsto \sigma_t(x)$ is weakly continuous for $x\in M$ from the continuity in metric, and since it is bounded, it is also $\sigma$-weakly continuous (see e.g. \cite[lemma II.2.5]{TakesakiBook}) and thus $(M,\R,\sigma)$ defines a covariant system by \cite[proposition X.1.2]{TakesakiBook}. 

Note that $\sigma_t$ induces a one parameter group of isometries on $L^2(A,\varphi)$, which is strongly continuous from the continuity on $A$ extended by density, we will write $\Delta^{it}$ this semigroup, with $\Delta$ an unbounded non-singular densely defined operator on $L^2(A,\varphi).$ Then $\Delta^{it}(x\xi_\varphi)=\sigma_t(x)\xi_\varphi$ for $x\in A$. 

Note $\widehat{h_{R+1}}$ is continuous compactly supported in $[-R-2,R+2]$ equal to $1$ on $[-R-1,R+1]$ and we have $h_{R+1}\in L^1(\R).$ If $x\in M(\sigma,[-R,R])$, we know that $\sigma_{h_{R+1}}(x)=x$ by \cite[lemma XI.1.3]{TakesakiBook} and $x\xi_\varphi=\int_{-\infty}^\infty dt h_R(t)\Delta^{it}(x\xi_\varphi)=[\widehat{h_R}(\ln(\Delta))](x\xi_\varphi)$ by spectral calculus (since $\int_{-K}^K dt h_R(t)e^{itx}$ converges uniformly on $\R$ to $\widehat{h_R}$ and $\int_{-K}^K dt h_R(t)\Delta^{it}$ (as uniform limit of Riemann sums) thus converges in norm when $K\to \infty$ to $[\widehat{h_R}(\ln(\Delta))]$ by \cite[p326 and Th 13.30]{Rudin} since the unbounded selfadjoint case is developed from the unitary case). Thus by functional calculus again $||\Delta\alpha(\alpha+\Delta)^{-1}(x\xi_\varphi)||=||\Delta\alpha(\alpha+\Delta)^{-1}\widehat{h_{R+1}}(\ln(\Delta))(x\xi_\varphi)||\leq e^{(R+2)}||(x\xi_\varphi)||$ and since this is independent of $\alpha$ one gets $x\xi_\varphi\in D(\Delta)$ and at the limit $\alpha\to \infty$ : $||\Delta(x\xi_\varphi)||\leq e^{(R+2)}||(x\xi_\varphi)||$ for any $x\in M(\sigma,[-R,R]).$

Note that  the argument above on spectral algebras for $\sigma$ suffices to get $A\xi_\varphi\subset D(\Delta).$ Indeed, it suffices to use the equation $\sigma_{h_{R+1}}(x)=H_{R+1}(x)=x$ from axiom $(2)$ for $x=F_R(y).$

As in lemma \ref{NormG}, one deduces $G_s(x)\xi_\varphi=e^{s/2}2\Delta^{1/2}(\Delta+e^s)^{-1}(x\xi_\varphi).$ 
Let us write for $\beta\in ]0,1[$ $\E_{\beta}(x,y)=\langle\Delta^{\beta}x\xi_\varphi,y\xi_\varphi \rangle$ and note that $\E_{0}(F_{K}(x),F_L(y))=\E_{0,K,L}(x,y).$
From (19) and some arguments in lemma \ref{EquationForms}, one thus deduces for $x,y\in M$ \[\E_{\alpha+\beta,K,L}(x,y)=\frac{\cos(\alpha \pi)}{2\pi}\int_{-\infty}^\infty dt e^{\alpha t}\E_{\beta,K,L}(G_{t}(x),y)
.\]
 
For $\beta=0$, we have  $\E_{0,K,L}(G_{s}(x),y)=\langle e^{s/2}2\Delta^{1/2}(\Delta+e^s)^{-1}(F_K(x)\xi_\varphi),F_L(y)\xi_\varphi\rangle$ and from the formula above and from the same formula for $\Delta$, one deduces, $\E_{\alpha,K,L}(x,y)=\langle \Delta^{\alpha}(F_K(x)\xi_\varphi),(F_L(y)\xi_\varphi)\rangle, \alpha<1/2.$ Replacing $\beta=0$ by any $\beta<1/2$ one thus gets for any $\alpha+\beta<1$, $\E_{\alpha+\beta,K,L}(x,y)=\langle \Delta^{\alpha+\beta}(F_K(x)\xi_\varphi),(F_L(y)\xi_\varphi)\rangle$.

Applying now similarly (20) one gets for $\alpha+\beta=1,x,y\in A$
\[\varphi(F_L(y)[F_K(x)]^*)=\frac{\cos(\alpha \pi)}{2\pi}\int_{-\infty}^\infty dt e^{\alpha t}\E_{\beta}(G_{t}^\varphi(x),y)=\langle \Delta(F_K(x)\xi_\varphi),(F_L(y)\xi_\varphi)\rangle.\]

Thus by sesquilinearity, we even have for any $x,y\in A$ :
\[\varphi(yx^*)=\langle \Delta(x\xi_\varphi),(y\xi_\varphi)\rangle.\]
Since $\Delta^{1/2}$ is a closed bounded operator, taking $x_n\to x,$ (*-strongly) $x_n\in A,x\in M$  a bounded net one deduces from the formula above that $||\Delta^{1/2}(x_n\xi_\varphi)||\leq ||x_n||$ is bounded thus $(\Delta^{1/2}(x_n\xi_\varphi))$ has a weakly converging subnet and a normwise converging convex combination so that by closability, $x\xi_\varphi\in D(\Delta^{1/2})\supset M\xi_\varphi$ and the equality above is thus extended to $M$ in the form \[\varphi(yx^*)=\langle y^*\xi_\varphi,x^*\xi_\varphi\rangle=\langle \Delta^{1/2}(x\xi_\varphi), \Delta^{1/2}(y\xi_\varphi)\rangle, \ \ x,y\in M.\]

But using again the equality in (17) now extended  to $M$ (by strong-* density and continuity on $L^2(M,\varphi)$ of $\Delta^{1/2}(1+\Delta)^{-1}=\Delta^{1/2}(1+\Delta)^{-1/2}(1+\Delta)^{-1/2}$) \[4d_U(x,0)^2=||G_0(x)\xi_\varphi||^2+||G_0(x)^*\xi_\varphi||^2=||\Delta^{1/2}(1+\Delta)^{-1}(x\xi_\varphi)||^2+||\Delta(1+\Delta)^{-1}(x\xi_\varphi)||^2\leq 2||x\xi_\varphi||^2.\]
Thus this implies $\varphi$ faithful on $M$ and thus has a modular group and the relation above implies $\Delta=\Delta_\varphi$ and thus $\sigma_t=\sigma_t^\varphi.$ All the previous computations shows that the data of our model coincide to the one produced from  $(M,\varphi)$ but maybe $m_{K,L}$ for which we give an extra argument :

$F_K(x).F_L(y)$ computed in $M$ is obtained by taking the limit of \[\sum_{i,j}\lambda_i \mu_j m_{O_i,P_j}(F_K(x),F_L(y))=\sum_{i}\lambda_i  m_{O_i,L}(F_K(x),\sum_{j}\mu_j F_{P_j}(y))\] from the definition of step 3 for some convex combinations with $O_i,P_j$ large enough and from an equality in (2). Now, from the inequality in (12), one gets :\[||\sum_{i}\lambda_i  m_{O_i,L}(F_K(x),\sum_{j}\mu_j F_{P_j}(y))-m_{K,L}(x,\sum_{j}\mu_j F_{P_j}(y))||_\varphi^*\leq e^L||y|| ||\sum_{i}\lambda_i F_{O_i}(F_K(x))-F_K(x)||_\varphi^*\]
and this net has been chosen so that this tends to $0$. Similarly $m_{K,L}(x,\sum_{j}\mu_j F_{P_j}(y))\to m_{K,L}(x,y)$ and one thus obtains $F_K(x).F_L(y)=m_{K,L}(x,y)$ as expected.

 We have thus proved our first axiomatization result.

 \begin{step}
Identifying the ultraproduct.
\end{step}

First, with our original formula for $||x||_\varphi^*$ equivalent to the one in (17) from lemma \ref{NormG} a bounded sequence $(x_n)$ with $d(x_n,0)\to_{n\to \omega} 0$ can be decomposed in $x_n=y_n+z_n$ with $||y_n||_\varphi\to_{n\to \omega} 0,||z_n^*||_\varphi\to_{n\to \omega} 0$, i.e. with the notation of \cite{HaagerupAndo} $(y_n)\in\mathcal{L}_\omega, (z_n)\in\mathcal{L}_\omega^*$. But by their proposition 3.14, $(M_n,\varphi_n)^\omega=\ell^\infty(\N, M_n)/(\mathcal{L}_\omega+\mathcal{L}_\omega^*)$ as vector spaces thus this is the same set as the model theoretic ultraproduct for our structure. 

$(\varphi_n)^\omega$ is then defined in the same way and we have to see that the canonical map above gives a state preserving $*$-homomorphism. The only non obvious part is the identification of the product. But if one uses all the results of \cite[Th 4.1, lemma 4.13,4.14]{HaagerupAndo}, it is obvious that all our ultraproduct data is the data taken in their ultraproduct, even the multiplication map $m_{K,L}(x_n,y_n)=F_K(x_n).F_L(y_n)$ since $(F_K(x_n)))\in\mathcal{M}^\omega$ and the product is defined as the sequence of products on those sequences. Thus the von Neumann algebra structure has to coincide with the model theoretic one, since we have just checked the model of the Ocneanu ultraproduct (which is known to be a $W^*$-probablity space) is indeed the model theoretic ultraproduct of models (and since we checked in our previous steps that the model determines the von Neumann algebra structure).



\end{proof}
\subsection{More axiomatization results}

We now consider extra properties enabling to axiomatize explicitly interesting classes. We fix a closed discrete set $\Gamma\subset \R$ that will contain $Sp(\sigma^\varphi)\subset \Gamma$. The next axiom says $\sigma^\varphi$ has the appropriate spectrum with respect to $\Gamma$. 
\begin{enumerate}
\item[(21)]For $N\in\Q\cap]0,\infty[,l\in\Q^*,m\in\N^*$ with $]l-N,l+N[\subset \Gamma^c$  
\[ \sup_{x\in D_{m}}d_U(F_{N,l}(x)),0)=0.\]
\end{enumerate}
Up to now, all axioms form a $\forall$-axiomatizable theory.

Our last axioms in the case $Sp(\sigma^\varphi)= \log(\lambda)\Z,\lambda\in]0,1[$ will enable to identify a $III_\lambda$-factor with a periodic state. We will use notations similar to those in \cite{FarahII} to use their characterization of $II_1$ factors on the centralizer:
\[\xi_{K}(x)=\sqrt{\varphi(M_{(K,K)}(x^*,x))-|\varphi(x)|^2}\]
\[Com_{K}(a,b)=[m_{K,K}(b,a)-m_{K,K}(a,b)],\]
\[\eta_{K}(x)=\sup_{y\in D_1}\varphi\left[M_{(2K,2K)}([Com_{K}(x,y)]^*,[Com_{K}(x,y)])\right],\]
\[Proj_{K}(a)=[m_{K,K}(a,a^*)-m_{2K,2K}(m_{K,K}(a,a^*),[m_{K,K}(a,a^*)]^*)],\]
Then our next two axioms are expressed as follows :
\begin{enumerate}
\item[(22)] 
For $N\in\Q\cap]0,\infty[,l\in\Q^*,m\in\N^*$ with $]l-N,l+N[\cap \Gamma =\{n\log(\lambda)\}, n\in \N^*$ and $n\log(\lambda)\in \Gamma\cap ]l-N/2,l+N/2[$  
\[ \inf_{x\in D_{1}}\max\left[|d_U(M_{\lceil|l|+N\rceil+1,\lceil|l|+N\rceil+1}(x^*,x),1),d_U(2F_{N,l}(x)-F_{N/2,l}(x),x)\right]=0.\]
\item[(23)]For $N\in\Q\cap]0,\infty[,N<|\log(\lambda)|,m\in\N^*,K\geq \lceil N\rceil$ 
\[ \sup_{x\in D_1}\max(0,\xi_{K}(F_N(x))-\eta_{K}(F_N(x)))=0,\]
\[ \inf_{x\in D_1}(\varphi(m_{4K,4K}((Proj_{K}(F_N(x)))^*,Proj_{K}(F_N(x))))+|\varphi(M_{(K,K)}(F_N(x),F_N(x^*)))-1/\pi|)=0.\]
\end{enumerate}

We will see that a result of \cite{HaagerupAndo} will state that $II_1$-factors are not axiomatizable in our language for $\sigma$-finite von Neumann algebras. Of course, tracial $W^*$-probability spaces are axiomatizable in requiring $\varphi$ to be a trace. Similarly, even though $II_\infty$ factors won't be axiomatizable, we may be interested in having a canonical model in choosing a specific state $\varphi=tr\ot \varphi|_{B(H)}$ on $M=N\ot B(H)$, with $N$ a tracial von Neumann algebra. This is the purpose of the next axiom that uses extra constant symbols $w_{i,j}\in D_1,i,j\in\N$ for a matrix unit and also that $\psi=\varphi|_{B(H)}$ is given by $\psi(w_{j,j})=2^{-j-1}$, $\psi(w_{j,k})=0$. We will say $\varphi$ is a geometric state (for this matrix unit). Note that this implies (e.g. by \cite[Th 2.11]{TakesakiBook}) that $\sigma_t^{\varphi}(w_{j,k})=2^{(k-j)it}w_{j,k}$ and thus $w_{k,j}\in M(\sigma^\varphi,\{(j-k)\ln(2)\}).$

\begin{enumerate}
\item[(24)] $\varphi(w_{j,j})=2^{-j-1}$, $\varphi(w_{j,k})=0$, $w_{k,j}^*=w_{j,k}$, $H_{\lceil|j-k|\ln(2)\rceil+1}(w_{j,k})=w_{j,k},$  \[M_{(\lceil|j-k|\ln(2)\rceil,\lceil|l-m|\ln(2)\rceil)}(w_{j,k},w_{l,m})=\delta_{k,l}w_{l,m},\]
\item[(25)]$H_1(w_{0,0}F_N(x)w_{0,0})=w_{0,0}F_N(x)w_{0,0},$ with $w_{k,k}F_N(x)w_{j,j}:=M_{N,0}(M_{0,N}(w_{k,k},F_N(x)), w_{j,j})$ \[\varphi(M_{(0,0)}(w_{0,0}xw_{0,0},w_{0,0}yw_{0,0}))=\varphi(M_{(0,0)}(w_{0,0}yw_{0,0},w_{0,0}xw_{0,0})).\]
\item[(26)]
\[ \sup_{x\in D_1}\max(0,\xi_{1}(w_{0,0}xw_{0,0})-\eta_{1}(w_{0,0}xw_{0,0}))=0,\]
\[ \inf_{x\in D_1}(\varphi(m_{4K,4K}((Proj_{1}(w_{0,0}xw_{0,0})^*,Proj_{1}(w_{0,0}xw_{0,0})))+|\varphi(M_{(1,1)}(w_{0,0}xw_{0,0},w_{0,0}x^*w_{0,0}))-1/\pi|)=0.\]
\item[(27)]
$\varphi(M_{(0,0)}(w_{0,0}xw_{0,0},w_{0,0}yw_{0,0}))=2\varphi(w_{0,0}xw_{0,0})\varphi(w_{0,0}yw_{0,0})$
\end{enumerate}

\begin{theorem}\label{MoreAxiom}
The following classes are also axiomatizable in the same language as $\sigma$-finite $W^*$-probability spaces :
\begin{enumerate}
\item $\sigma$-finite $W^*$ probability spaces with $Sp(\sigma^\varphi)\subset \Gamma$ for a discrete set $\Gamma$, by the theory $T_{\sigma W^*}(Sp\subset\Gamma)$ consisting of axioms (1)-(21).
\item $III_\lambda$ factors with a periodic state for  $0<\lambda< 1$,  by the theory $T_{\sigma III_\lambda}$ consisting of axioms (1)-(23) with $\Gamma=\log(\lambda)\Z$.
\item $III_\lambda$ factors with some faithful state for  $0<\lambda\leq 1.$
\end{enumerate}
The following classes are axiomatizable in the corresponding expansions described above in the definition of each formula:
\begin{enumerate}
\item[(iv)]  $\sigma$-finite  $W^*$ probability spaces  of the form $N\ot B(H)$ for $N$ tracial and 
with $\varphi$ a geometric state, by $T_{\sigma W^* geom}$ consisting of $T_{\sigma W^*}$ and (24)-(25).
\item[(v)] $\sigma$-finite $II_\infty$ factors 
with $\varphi$ a geometric state, by $T_{\sigma W^*II_\infty geom}$ consisting of $T_{\sigma W^*}$ and (24)-(26).
\item[(vi)]  $\sigma$-finite  type $I_\infty$ factors
with $\varphi$ a geometric state, by $T_{\sigma W^*I_\infty geom}$ consisting of $T_{\sigma W^* geom}$ and (27).
\end{enumerate}
\end{theorem}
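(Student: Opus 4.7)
The overall strategy is, for each class, to leverage Theorem \ref{OcneanuTh}: every $\sigma$-finite $W^*$-probability space in the class satisfies axioms (1)--(20), so it suffices to check the additional axioms are satisfied by objects in the class and conversely that a model of $T_{\sigma W^*}$ together with the additional axioms produces a von Neumann algebra lying in the class. Verifying that a concrete $(M,\varphi)$ satisfies the extra axioms amounts in each case to translating a standard operator-algebraic property into the present language using the spectral/integral formulas of Lemmas \ref{ContinuityProduct}--\ref{EquationForms}. The converse direction goes via the von Neumann algebra $(M,\varphi)$ and modular group $\sigma_t^\varphi$ recovered in the steps of the proof of Theorem \ref{OcneanuTh}.

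For (i), axiom (21) asserts $F_{N,l}^\varphi(x)=\sigma^\varphi_{f_{N,l}}(x)=0$ whenever $]l-N,l+N[\subset\Gamma^c$. Since $\widehat{f_{N,l}}$ is supported in $[l-N,l+N]$ and $\widehat{f_{N,l}}(t)>0$ on $]l-N,l+N[$, Lemma \ref{SpectralAlgebras} (and the description of $\mathrm{Sp}(\sigma^\varphi)$ through vanishing of $\sigma_f^\varphi$) gives the equivalence $\mathrm{Sp}(\sigma^\varphi)\subset\Gamma$, and conversely these equalities hold in any $W^*$-probability space with spectrum in $\Gamma$. For (ii), with $\Gamma=\log(\lambda)\mathbb{Z}$, the periodic state condition gives $\sigma_{T}^\varphi=\mathrm{id}$ with $T=-2\pi/\log\lambda$ and thus $\mathrm{Sp}(\sigma^\varphi)\subset\log(\lambda)\mathbb{Z}$. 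Axiom (22), using the interpolation identity $2\widehat{f_{N,l}}(s)-\widehat{f_{N/2,l}}(s)=1$ for $s\in \,]l-N/2,l+N/2[$ applied at $s=n\log\lambda$, is equivalent to the existence of an isometry in the spectral subspace $M(\sigma^\varphi,\{n\log\lambda\})$, which is automatic for a $III_\lambda$ factor with periodic state (its discrete decomposition gives even unitaries implementing the generator). Axiom (23), restricted to $F_N(x)$ with $N<|\log\lambda|$ (so $F_N(x)$ lies in the centralizer $M^\varphi$), reproduces Farah's characterization \cite{FarahII} of $M^\varphi$ being a $II_1$-factor (the first inequality is the approximate non-commutativity of variance-positive elements, the second produces a projection of trace approximately $1/\pi$, which is irrational hence rules out type $I$); this is equivalent to $M$ being a $III_\lambda$-factor with periodic state by the Takesaki--Connes structure of such factors.

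Part (iii) is the main obstacle since the theory cannot be explicitly written: a $III_\lambda$ factor need not carry a periodic state and the faithful normal state is not canonical. The plan is to use the model-theoretic criterion \cite[Prop 5.14]{BenYBHU}, showing the class of $\sigma$-finite $W^*$-probability spaces whose von Neumann algebra is $III_\lambda$ is stable under ultraproducts and ultraroots. Ultraroot stability is immediate since $III_\lambda$ is preserved under taking subalgebras with faithful conditional expectation, provided this subalgebra is the original algebra (diagonal embedding). For ultraproduct stability over general (possibly uncountable) index sets, one combines the countable case from \cite{HaagerupAndo} with the generalizations obtained in Sections 3 and 4: the Ando--Haagerup theory identifies the Ocneanu ultrapower's type from the preduals' Connes spectrum, and the preduals theory of Section 2 together with the axioms ensuring $\mathrm{Sp}(\sigma^\varphi)$-behaviour upgrades this to arbitrary ultrafilters.

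For (iv)--(vi), the constant symbols $w_{i,j}$ and axioms (24)--(25) force $\{w_{i,j}\}$ to be a matrix unit with $\varphi(w_{j,j})=2^{-j-1}$, so by Tomita--Takesaki (cf.\ \cite[Th 2.11]{TakesakiBook}) $w_{k,j}$ lies in the spectral subspace $M(\sigma^\varphi,\{(j-k)\log 2\})$, and the generated algebra is a copy of $B(H)$ on which $\varphi$ is the geometric state. Axiom (25) further asserts that $w_{0,0} M w_{0,0}$ is contained in the centralizer, giving $M\simeq N\otimes B(H)$ with $N=w_{0,0}Mw_{0,0}$ tracial; conversely in $N\otimes B(H)$ with this $\varphi$ these relations hold. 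For (v), axiom (26) is exactly the centralizer analogue of (23) applied to the corner $w_{0,0}Mw_{0,0}$, forcing $N$ to be a $II_1$-factor and hence $M$ a $II_\infty$-factor. For (vi), axiom (27) is the factorization $\varphi(xy)=2\varphi(x)\varphi(y)$ for $x,y$ in the corner (using $\varphi(w_{0,0})=1/2$), equivalent to $w_{0,0}Mw_{0,0}$ being one-dimensional, hence $M\simeq B(H)$. In each direction, the verification of these axioms in concrete models uses only standard modular-theoretic facts about geometric states, and the recovery uses the faithful GNS construction of step 3 in the proof of Theorem \ref{OcneanuTh}.
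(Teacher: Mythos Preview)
Your treatment of (i), (iv), (v), (vi) follows the paper's approach and is correct. Your (ii) is close in spirit, though the paper is more careful about the converse direction: from (23) one deduces the centralizer $M^\varphi$ is a $II_1$ factor, hence (since the center of $M$ lies in the centralizer) $M$ is a factor; then $\log(S(M)\setminus\{0\})=\mathrm{Sp}(\sigma^\varphi)$ by general theory, and (22) is used to show $\mathrm{Sp}(\sigma^\varphi)=\log(\lambda)\mathbb{Z}$ \emph{exactly} (not merely contained in, which is what (21) gives) via a contradiction argument --- if $M(\sigma^\varphi,\{\log\lambda\})=\emptyset$ then $F_{N,l}(x)=F_{N/2,l}(x)=0$ for all $x$, so the approximate witness $x=x(\epsilon)\in D_1$ from (22) would satisfy $\|x\|^*_\varphi\leq\epsilon$ while $\|M_{K,K}(x^*,x)-1\|^*_\varphi\leq\epsilon$; but $\|M_{K,K}(x^*,x)\|^*_\varphi\leq C\|x\|\,\|x\|^*_\varphi$ by Lemma \ref{ContinuityProduct}, a contradiction for small $\epsilon$. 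Your claim that (22) is ``equivalent to the existence of an isometry'' overstates what the approximate existential axiom gives.

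There is a genuine gap in your (iii). The ultraroot direction is \emph{not} immediate: type $III_\lambda$ is certainly not inherited by subalgebras admitting a faithful normal conditional expectation (e.g.\ $\mathbb{C}$, or any $II_1$ subfactor, sits inside any $III_\lambda$ factor with expectation). The paper's argument is instead a case-by-case exclusion: if $M^\omega$ is a $III_\lambda$ factor then $M$ is a factor (its center embeds in the center of $M^\omega$), and one rules out every other type for $M$ by showing its ultrapower would have the wrong type. Types $I_n$, $II_1$ have ultrapowers of the same type; $I_\infty$ is handled via $T_{\sigma W^* I_\infty\mathrm{geom}}$ from (vi); $II_\infty$ via a geometric state and $T_{\sigma W^* II_\infty\mathrm{geom}}$ from (v); $III_\mu$ for $\mu\in(0,1]$, $\mu\neq\lambda$, by Corollary \ref{AHlambda}; and $III_0$ because its ultrapower (for a countably incomplete ultrafilter) is never a factor (Corollary \ref{AHzero}). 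Both corollaries require the Ando--Haagerup theory of Sections 3--4 precisely to handle ultrafilters not on $\mathbb{N}$, and this machinery enters the ultraroot direction just as essentially as the ultraproduct direction --- so ultraroot stability is the harder part, not a one-liner as you suggest.
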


\begin{remark}
The reader should note that $T_{\sigma W^*}(Sp\subset\Gamma),T_{\sigma W^* geom},T_{\sigma W^*I_\infty geom}$ are universal theories while $T_{\sigma W^*II_\infty geom},T_{\sigma III_\lambda}$ are $\forall\exists$ theories as was the theory of tracial $W^*$ probability spaces which are $II_1$ factors.
\end{remark}

\begin{proof}
Condition (21) says explicitly $Sp(\sigma_\varphi)\subset \Gamma$ in saying $M(\sigma, E)=\{0\}$  for any closed set $E\subset ]l-N,l+N[\subset \Gamma^c$. 

For a $III_\lambda$ factor with a periodic state Takesaki's theorem \cite[Th 1.27]{Takesaki}, there is an isometry $u$ (thus $u^*u=1$) in $M(\sigma^\varphi,\{\log(\lambda)\}).$
By discreteness of $\Gamma$ for  $\gamma=\log(\lambda)\in\Gamma$, one can find $l,N$ as in (22). 
Since $2\widehat{f_{N,l}}-\widehat{f_{N/2,L}}$ is supported in $[l-N,l+N]$ and equal to $1$ on $[l-N/2,l+N/2]$ thus a neighborhood of $\{\gamma\}$ so that $2F_{N,l}(u)-F_{N/2,L}(u)=u$ and similarly,  by \cite[lemma XI.1.3]{TakesakiBook} again, $M_{\lceil|l|+N\rceil+1,\lceil|l|+N\rceil+1}(u^*,u)=u^*u=1$ and thus (22) is satisfied.

Moreover, by the same theorem of Takesaki, the centralizer is a $II_1$ factor. Since the state is lacunary (as soon as (21) holds), the argument in \cite[lemma 2.3]{HaagerupStormer} gives that $F_N$ with $N$ as in (23) is the state preserving projection on the centralizer. (23) thus says that the centralizer satisfies the axioms (16)-(17) in \cite{FarahII} and thus is not only a tracial von Neumann algebra but a $II_1$ factor. Thus it is indeed satisfied if $M$ is a $III_\lambda$ factor with a periodic state.

Conversely, assuming not only (21) but also (23), we know that the centralizer is a $II_1$ factor and one argues in the spirit of \cite[Rmk 6.12]{HaagerupAndo}, the center of $M$ is in the centralizer, thus in the center of the centralizer, thus $M$ is a factor. Then by general results, the log of Connes' $S$-invariant is $\log(S(M)-\{0\})=Sp(\sigma^\varphi)$. We then claim that (22) implies that $Sp(\sigma^\varphi)=\log(\lambda) \Z$ (and not only included as (21) implies, and this gives by definition that $M$ is a type $III_\lambda$ factor and $\varphi$ a periodic state). Indeed assume $M(\sigma^\varphi,\{log(\lambda)\} )=\emptyset.$ This means for any $x$, $F_{N,l}(x)=F_{N/2}(x,l)=0$ but if (22) holds, there is thus a $x=x(\epsilon)$ with $||x||_\varphi^*\leq \epsilon$ and $||(M_{\lceil|l|+N\rceil+1,\lceil|l|+N\rceil+1}(x^*,x)-1||_\varphi^*\leq \epsilon$
But $||M_{K,L}(x^*,x)||_\varphi^*\leq 4(K+1)(L+1)C_{K+1}||x|| \ ||x||_\varphi^*$ which is as small as one wants if $\epsilon$ small enough, contradicting $|\  ||(M_{\lceil|l|+N\rceil+1,\lceil|l|+N\rceil+1}(x^*,x)||_\varphi^*-1|\leq \epsilon.$ Thus (22) implies 
$M(\sigma^\varphi,\{log(\lambda)\} )\neq\emptyset.$ 
The same reasoning applies to $M(\sigma^\varphi,\{n\  log(\lambda)\} )\neq\emptyset$ for $n\in \N$ and then by adjoint for $n\in \Z.$ 

This concludes the axiomatization of $III_\lambda$ factors with periodic states.

For $II_\infty$ factors (algebras) with geometric state, we checked that a geometric state satisfies (24) and conversely, $w_{i,j}\in A$ and is thus a matrix unit (for $\sum_{i}w_{i,i}=1$ since $w_{i,i}$ are orthogonal projections, $||1- \sum_{i\leq n}w_{i,i}||_\varphi^\#=\varphi(1- \sum_{i=0}^nw_{i,i})=2^{-(n+1)}\to 0. $ This gives the strong convergence of $\sum_{i\leq n}w_{i,i}$ to $1$). Moreover if $e=w_{0,0},$ $\varphi$ restricted to $eMe$ is tracial by (25) and for instance \cite[Prop IV.1.8]{TakesakiBook} gives $M\simeq eMe\ot B(H).$ Using \cite{FarahII}, axiom (26) then says $eMe$ is a $II_1$ factor. (27) says $eMe=\C$ since $2\varphi(w_{0,0}.w_{0,0})$ is a state on $eMe$ with $e$ as unit.

Finally, the axiomatizability statement for $III_\lambda$ factors, $\lambda\in]0,1]$ will follow from \cite[Prop 5.14]{BenYBHU}. We will only check that variants of the results of \cite{HaagerupAndo} implies their stability by ultraproducts and ultraroots. Note that actually, it suffices to check stability by ultraroots for countably incomplete ultrafilters (since one can always take an ultrafilter of that type in Keisler-Shelah theorem). 

 The stability by ultraproducts is \cite[Th 6.11]{HaagerupAndo} and our slight extension corollary \ref{AHlambda} bellow. 
  Recall for a fixed state $(M,\varphi)^\omega=M^\omega$ does not depend on $\varphi$. Of course, if $M^\omega$ is a $III_\lambda$-factor, the center of $M$ is included in the center of $M^\omega$ thus $M$ is a factor. Of course, the ultrapower of a type $I_n,I_\infty,II_1$ factor is of the same type (see e.g. \cite[Prop 6.1]{HaagerupAndo} for references, for type $I_\infty$ factors one can use our axiomatization $T_{\sigma W^*I_\infty geom}$ with a geometric state), the ultrapower of a type $III_\lambda$, $\lambda\in]0,1]$ is of the same type as recalled, and using \cite[Th 6.18]{HaagerupAndo}, the ultraproduct of a type $III_0$ is never a factor (see corollary \ref{AHzero} for the uncountable variant in case of countably incomplete ultrafilters). Thus it  only remains to exclude $M$ to be a $II_\infty$ factor (which is explained in the separable case in \cite[Prop 6.1]{HaagerupAndo}). But if $M=N\otimes B(H)$ for $H$ separable and $N$ a $II_1$-factor, since the ultrapower does not depend on the state, we can realize the ultrapower with the geometric state so that $M$ satisfies $T_{\sigma W^*II_\infty geom}$, thus so does its ultrapower, and thus it is a $II_\infty$ factor too.
\end{proof} 

\begin{remark}\label{NonAxiom}[Non-Axiomatizability results of various classes]

Using \cite[Prop 6.3, Th 6.18]{HaagerupAndo}, one sees that factors, $III_0$ factors are not stable by ultrapowers, and $II_1$-factors are not stable by ultraproducts, thus none of these classes are axiomatizable in the language for $\sigma$-finite factors. However, the proof above shows that every other classes of factors namely $I_n, n\in \N^*\cup \{\infty\},II_1,II_\infty$ are local (i.e. stable by ultrapowers and ultraroots).
\end{remark}

\begin{corollary}\label{Qelim}
The theory of $B(H)$ described above as $T_{\sigma W^*I_\infty geom}$ in the language of $\sigma$-finite von Neumann algebras  with constants for matrix units is $\omega$-categorical  and admits quantifier elimination.
\end{corollary}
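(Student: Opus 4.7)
Both statements will follow from the observation that the axioms of $T_{\sigma W^*I_\infty geom}$ force every model to be isomorphic to the standard $(B(\ell^2(\N)),\varphi_{\text{geom}})$ with its standard matrix units, via an isomorphism which is unique once the constants are fixed.

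For the structural analysis: if $M\models T_{\sigma W^*I_\infty geom}$, then by Theorem~\ref{MoreAxiom}(vi) $M = B(K)$ for some Hilbert space $K$. Axiom~(27) asserts that the renormalized corner state $\psi = 2\varphi(\cdot)$ on $w_{0,0}Mw_{0,0}$ is multiplicative, which combined with faithfulness of $\varphi$ forces $w_{0,0}Mw_{0,0} = \C w_{0,0}$, that is, $w_{0,0}$ is a rank one projection in $B(K)$. Via the partial isometry relations (24), every $w_{i,i}$ is then rank one, and $\sum_{i\le n}w_{i,i}\to 1$ in $\|\cdot\|_\varphi^*$ (using Lemma~\ref{NormG} together with the rapid decay $\varphi(1-\sum_{i\le n}w_{i,i}) = 2^{-n-1}$). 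Hence $K$ is countably-infinite dimensional and $M\cong B(\ell^2(\N))$. The remaining structural data ($\|\cdot\|_\varphi^*$, smeared products, Fej\'er maps, forms $\mathscr{E}_\alpha$, modular group $\sigma_t$) being functorially determined from the algebra, the state, and the matrix units by the formulae of Section~1.1, this algebraic isomorphism is one of metric structures. It is unique because the constants $w_{i,j}$ must be preserved, so all models are isomorphic, yielding $\omega$-categoricity.

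For quantifier elimination, I plan to show that the quantifier-free type of a tuple over $\emptyset$ uniquely determines the tuple in every model; since under the canonical identification of the previous paragraph every model is $B(\ell^2(\N))$, it suffices to work there. For $x \in B(\ell^2(\N))$, each product $w_{0,i}xw_{j,0}$ lies in $w_{0,0}Mw_{0,0}=\C w_{0,0}$, so the matrix coefficients $\mu_{i,j}(x) := 2\varphi(w_{0,i}xw_{j,0})$ are scalars, quantifier-freely definable from $x$, the constants $w_{i,j}$, and the smeared products $m_{K,L}$. The compressions $p_Nxp_N = \sum_{i,j\le N}\mu_{i,j}(x)w_{i,j}$ (with $p_N = \sum_{i\le N}w_{i,i}$) converge to $x$ in $\|\cdot\|_\varphi^*$, so the matrix coefficients determine $x$; the case of $n$-tuples is analogous via the mixed moments $\varphi(w_{0,k_0}x_{\ell_1}^{\epsilon_1}w_{k_1,k_1'}\cdots x_{\ell_n}^{\epsilon_n}w_{k_n,0})$. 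Consequently the quantifier-free type determines the complete type, and by the standard characterization, $T_{\sigma W^*I_\infty geom}$ admits QE.

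The main step demanding care is the convergence $p_Nxp_N\to x$ in $\|\cdot\|_\varphi^*$ for every $x$ in every model, not only in the separable one. This relies on the formula $\|x\|_\varphi^{*2}=\|\Delta^{1/2}(1+\Delta)^{-1/2}(x\xi_\varphi)\|^2$ of Lemma~\ref{NormG} together with the geometric decay $\varphi(1-p_N)=2^{-N-1}$: one estimates $\|x-p_Nxp_N\|_\varphi^*$ via the spectral decomposition of $\Delta$ and the bound $\|(1-p_N)\xi_\varphi\| = 2^{-(N+1)/2}$, which controls the compressions uniformly in $x$ within a ball. Once this identification is secured, the argument goes through without further difficulty.
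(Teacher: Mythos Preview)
Your argument for $\omega$-categoricity coincides with the paper's: the axioms pin down a unique separable model $(B(\ell^2(\N)),\varphi_{\text{geom}})$ with its matrix units, and that is all that is needed.

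For quantifier elimination your route is correct but different from the paper's. The paper invokes the substructure test \cite[Prop.~13.6]{BenYBHU}: since the constants $w_{i,j}$ are in the language and the $*$-algebra they generate (via the smeared products, which reduce to ordinary products on spectral elements) is $d$-dense in the unique model, any substructure $M\subset N$ already equals $N$, so the extension requirement in the test is vacuous. This is a two-line argument. You instead argue directly that quantifier-free types separate tuples, by extracting the matrix coefficients $\mu_{i,j}(x)$ as uniform limits of quantifier-free predicates and observing that they determine $x$. This is valid and perhaps more explicit about \emph{why} the constants suffice, but it is longer, and your last paragraph worrying about convergence ``in every model, not only in the separable one'' is unnecessary once you have already established that the theory has a single model up to isomorphism. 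Both approaches ultimately rest on the same fact: the matrix units generate a $d$-dense set, which is what your coefficient argument encodes and what the paper's substructure argument uses wholesale.
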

\begin{proof}
The theory has a unique model $B(H)$ and it is separable, implying a fortiori $\omega$-categoricity. The quantifier elimination test \cite[Prop 13.6]{BenYBHU} is obvious to check since for a substructure $M\subset N$ in this language with $N$ a model of $T_{\sigma W^*I_\infty geom}$, since the constant of the matrix units are in the language and generate a $d$ dense space, $M=N$ and the extension requirement is trivial.
\end{proof}

\subsection{An approximately minimal language enabling axiomatization of $\sigma$-finite $W^*$-probability spaces.}

A really natural question, first asked to us by Ita\"i Ben Yaacov, concerns the minimal language enabling to carry out the previous  axiomatization. At first sight, we thought that the modular group would be one piece of this language, and obtaining this axiomatization (even an explicit one) was easy but unreadable. But, in trying to answer another really natural question communicated to us by Ilijas Farah about the definability of the modular group, we realized that even this piece of data was not necessary. We summarize those results in this subsection.`

We consider the following countable language $\mathcal{L}_\nu$. Recall we have one sort $U$ with domains of quantification $D_n$ for the operator norm ball of radius $n$ of $M$.
\begin{itemize}
\item[$\bullet$]The constant 0 which will be in $D_1$. 
\item[$\bullet$] For every $\lambda\in \Q[i]$ a unary function symbol also denoted $\lambda$ to be interpreted as a scalar
multiplication. For simplicity we shall write $\lambda x$ instead of $\lambda (x)$.
\item[$\bullet$] A unary function symbol $*$ for involution on $U$, leaving stable all domains.
\item[$\bullet$] A binary function symbol $+:D_n\times D_m\to D_{n+m}$. and for $K,L\geq \nu,$ $m_{K,L}:D_{n}\times D_{m}\to D_{nm}$ (interpreted as $F_K^\varphi(.)F_L^\varphi(.)$ with modulus of continuity as obtained in lemma \ref{ContinuityProduct}).
\item[$\bullet$]The constant 1 in $D_1$.
\item[$\bullet$] Two unary relation symbols $\varphi_r$ and $\varphi_i$ for the real and imaginary parts of the state $\varphi$, on $U$. 
\end{itemize}

Using the (obvious multisorted/multidomain variant of the) definition 9.27 of \cite{BenYBHU}, we want to obtain the following result for the theory $T_{\sigma W^*}^\nu$ composed of logical consequences of $T_{\sigma W^*}$ in the language  $\mathcal{L}_\nu$, i.e. its so-called restriction to  $\mathcal{L}_\nu$.

\begin{theorem}\label{OcneanuThMinimal}
For any $\nu\in \N$, $T_{\sigma W^*}$ is an extension by definitions of $T_{\sigma W^*}^\nu$.
\end{theorem}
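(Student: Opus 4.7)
The plan is to show that each symbol of $\mathcal{L} \setminus \mathcal{L}_\nu$ is a definable function or predicate over $T_{\sigma W^*}^\nu$ in the sense of \cite[Defs 9.16, 9.27]{BenYBHU}, so that the extra axioms of $T_{\sigma W^*}$ reduce to definitional axioms. The central difficulty is to definably recover the modular group $\sigma_t$: once this is in hand, $G_t$, $F_{m,l}$, $H_K$, $\mathscr{E}_{\alpha,N,M}$, $\tau_{p,\lambda,N}$ all follow from the explicit Riemann-sum axioms (16)--(20) based on lemmas \ref{NormG}--\ref{EquationForms}; the small-index products $m_{K,L}$ with $\min(K,L) < \nu$ are recovered from $\sigma_t$ and the large-index $m_{K',L'}$ by a density/joint-uniform-continuity argument on the bounded spectral subspaces (lemma \ref{ContinuityProduct}); and $\C$-scalar multiplication is recovered from $\Q[i]$-scalar multiplication by continuity on domains.

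To establish uniqueness of the modular group given the $\mathcal{L}_\nu$-structure, I would mimic steps 2--4 of the proof of theorem \ref{OcneanuTh}. The subalgebra $A = \mathrm{Vect}\{m_{N,\nu}(x,1):x\in N,\ N\ge\nu\}$ (which is $\mathrm{Vect}\{F_N(x)\}_{N\ge\nu}$ in any $\mathcal{L}$-expansion) is already accessible in $\mathcal{L}_\nu$ and, endowed with the $m_{K,L}$ ($K,L\ge\nu$) and $\varphi$, determines a $\ast$-algebra faithfully represented on the GNS Hilbert space $L^2(N,\varphi)$ whose weak closure is canonically identified with $N$ as a von Neumann algebra. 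The modular operator $\Delta$ is then determined on the dense set $A\xi_\varphi\times A\xi_\varphi$ by the identity $\mathscr{E}_1(F_K(x),F_L(y)) = \varphi(m_{L,K}(y,x^*))$ established in step 4 of theorem \ref{OcneanuTh}, hence on all of $L^2(N,\varphi)$ by closure, and functional calculus uniquely fixes $\Delta^{it}$ and therefore $\sigma_t$.

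To upgrade uniqueness to definability in the continuous-logic sense, I would invoke the continuous analogue of Beth's definability theorem from \cite{BenYBHU}: since every symbol of $\mathcal{L}\setminus\mathcal{L}_\nu$ is preserved by every isomorphism of $\mathcal{L}_\nu$-reducts of models of $T_{\sigma W^*}$ (by the uniqueness argument above), each such symbol is a definable predicate/function over $T_{\sigma W^*}^\nu$. Alternatively, a direct construction can be given: $(\Delta+s)^{-1}$ admits a Lax--Milgram infimum characterisation in $\mathcal{L}_\nu$-formulas analogous to the one used in lemma \ref{NormG}; fractional powers of $\Delta$ are then accessible via the integral formula of lemma \ref{EquationForms}; applying this calculus to vectors $F_K(x)\xi_\varphi$ and passing to the limit produces uniform approximations of $\sigma_t(x)$ by $\mathcal{L}_\nu$-terms, the uniform continuity estimates being those of lemma \ref{ContinuityModular}.

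The main obstacle will be the technical bookkeeping needed to express the spectral calculus of the unbounded form $\mathscr{E}_1$ purely through expressions involving $m_{K,L}$ for $K,L\ge\nu$ and $\varphi$, while preserving the uniform continuity estimates demanded by continuous-logic definability. This is where the ``more technical but standard spectral theory'' announced in the introduction of the subsection enters; it produces approximating formulas at the cost of abandoning the universal explicit axiomatization of $T_{\sigma W^*}$, exactly the tradeoff advertised above.
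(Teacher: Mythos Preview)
Your proposal is correct and identifies both the paper's route and a genuine alternative. The paper takes your ``direct construction'' path and carries it out in full: it explicitly builds, as $\mathcal{L}_\nu$-definable objects, the resolvent forms $\psi_{K,L,u}(x,y)=\langle \Delta(u+\Delta)^{-1}F_L(x)\xi_\varphi,F_K(y)\xi_\varphi\rangle$ via an infimum formula over an explicit domain of quantification (the Lax--Milgram step you mention), then successively the maps $A_{u,K}=(u+\Delta)^{-1}F_K$, $B_{u,t,K}=(u+\Delta)^{-t}F_K$, $C_{u,\beta,K}=(\beta+\ln(u+\Delta))^{-1}F_K$, and $E_{u,\beta,t,K}=\exp\bigl(it\,\beta\ln(u+\Delta)(\beta+\ln(u+\Delta))^{-1}\bigr)F_K$, each with explicit operator-norm range bounds so the compositions make sense between domains; it then passes to the limits $\beta\to\infty$, $u\to 0$, $K,L\to\infty$ via Duhamel estimates to obtain $d(\sigma_t(x),y)^2$ as a definable predicate.

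Your Beth-theorem route is a legitimate shortcut: implicit definability follows from the uniqueness argument you outline (the $\mathcal{L}_\nu$-data fixes the dense $*$-algebra $A_\nu$ and hence the von Neumann algebra structure and $\Delta_\varphi$, and step~4 of theorem~\ref{OcneanuTh} forces $\sigma_t=\sigma_t^\varphi$ in any model of $T_{\sigma W^*}$). The direct approach buys explicit approximating formulas and quantitative control; Beth buys brevity at the cost of a black box. One caveat: Beth's theorem is not stated as such in \cite{BenYBHU}, so you would need an independent reference or a short argument via the ultraproduct characterization of definable predicates. For recovering $m_{K,L}$ with $\min(K,L)<\nu$, the paper uses the cleaner identity $m_{K,L}(x,y)=\tilde{m}_{O,P}(F_K(x),F_L(y))$ for $O,P\geq\nu$ large (reducing to definability of $F_K$, a special case of $F_{m,l}$) rather than a density argument.
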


\begin{proof}
By definition, $T_{\sigma W^*}$ is a conservative extension of $T_{\sigma W^*}^\nu$. It thus suffices to check that any supplementary logical symbol is definable. The main part is to prove the modular group $\sigma_t$ is defined in $T_{\sigma W^*}$ over $\mathcal{L}_\nu$. 

\setcounter{Step}{0}

\begin{step}
Definability of $\sigma_t$ in $T_{\sigma W^*}$ over $\mathcal{L}_\nu$.
\end{step}

This will use several spectral theory results we didn't recall yet. We will thus explain this in several sub-steps. To explain the general idea, to define $\Delta^{it}$ as a form bellow the form corresponding to the metric with generator $\Delta(1+\Delta)^{-1}$, we will define $(u+\Delta)^{it}=e^{it\ln(u+\Delta)}$ since $\ln(u+\Delta)$ is bounded bellow, one can use the standard Hille-Yosida theory of semigroups to define this from the resolvants $(\beta+ \ln(u+\Delta))^{-1}$ which can be itself produced from the semigroup at positive times $(u+\Delta)^{-t}$ and this semigroup can be defined by composition and integral formulas for fractional powers of $(u+\Delta)^{-1}$. Thus, in reverse order, we will start by defining those as forms and check all the steps can be written as Riemann integrals with explicit Lispchitz constants and  bounds on our domains of quantification in order to get the final definability. The starting point is an infemum formula to define $(u+\Delta)^{-1}$ in a way similar to our first definition of the metric. However, we should obtain an explicit domain of quantification where our infemum will be reached.

We first show that the relation \[\psi_{K,L,u}(x,y)=\varphi(F_K(y^*).\Delta(u+\Delta)^{-1}(F_L(x)))\ \mathrm{for}\ u\in \Q, u>0, K,L\geq \nu\] is defined in $T_{\sigma W^*}$ over $\mathcal{L}_\nu$ (even equivalent to a formula of the language in any model). Formally, this formula in $\mathcal{L}$ has to be understood as using the form for $\Delta^{1/2}$ and $G_{\ln(u)}$ related to $\Delta^{1/2}(u+\Delta)^{-1}$ namely \[\psi_{K,L,u}(x,y):=\frac{1}{2\sqrt{u}}\mathscr{E}_{1/2,K,L}(y,G_{ln(u)}(x)).\]

 First note that by definition and polarization : $\psi_{K,L,u}(x,y)=\langle \Delta^{1/2}(u+\Delta)^{-1/2}(F_K(y)\xi_\varphi),\Delta^{1/2}(u+\Delta)^{-1/2}(F_L(x)\xi_\varphi)=\frac{1}{4}\sum_{k=0}^3(-i)^k\psi'_{K,L,u}(i^kx,y)$
with
\begin{align*}\psi'_{K,L,u}(x,y)&:=||\Delta^{1/2}(u+\Delta)^{-1/2}[(F_K(y)+F_L(x))\xi_\varphi]||^2\\&=||\Delta(u+\Delta)^{-1}[(F_K(y)+F_L(x))\xi_\varphi]||^2+u||\Delta^{1/2}(u+\Delta)^{-1}[(F_K(y)+F_L(x))\xi_\varphi]||^2\end{align*}

Now if $x,y\in D_1(M),$ $X=F_K(y)+F_L(x)$, one sees as in lemma \ref{NormG} that 
\begin{align*}\psi'_{K,L,u}(x,y)&=\inf_{z\in M}\left[{\varphi(z^*z)+\frac{1}{u}\varphi((X-z)(X-z)^*)}\right]\\&=\inf_{z=2F_{2K+2L+2}(z)-F_{K+L+1}(z)\in D_{m(K,L,u)}(M)}\left[{\varphi(z^*z)+\frac{1}{u}\varphi((X-z)(X-z)^*)}\right]\end{align*}
since the first infemum, in any $W^*$ probability space, is reached at $z=\Delta(u+\Delta)^{-1}(X)=X-u(u+\Delta)^{-1}(X)\in D_{m(K,L,u)}(M)$ with $m(K,L,u)=\frac{3(e^K+e^L)}{2\sqrt{u}}$ since $||\Delta^{1/2}(F_K(y))||_M\leq 3e^K$ (using again the proof of \cite[lemma 4.13]{HaagerupAndo}) and the bound on $G_s$ in lemma \ref{NormG} as operator on $M$. And the equality with the second infemum follows since we also have the infemum reached in $M(\sigma^\varphi,[-K-L,K+L])$ explaining the identity $z=2F_{2K+2L+2}(z)-F_{K+L+1}(z)$. Thus we also obtain the explicit formula in $\mathcal{L}_\nu.$
For, recall that $\tilde{m}$ is explicitly defined from $m$ by \begin{equation}\label{tildem}\tilde{m}_{(N,N)}(x,y)=4m_{(2N,2N)}(x,y)+
m_{(N,N)}(x,y)-2m_{(2N,N)}(x,y)-2m_{(N,2N)}(x,y)\end{equation} so that (since the infemum below corresponds to the first infemum above for elements of the form $z=2F_{2K+2L+2}(Z)-F_{K+L+1}(Z)$ and is thus a priori in between the two infema we showed equal) we obtained :
\begin{align*}\psi'_{K,L,u}(x,y)&=\inf_{Z\in D_{m(K,L,u)}(M)}\left[\varphi(\tilde{m}_{K+L+1,K+L+1}(Z^*,Z))\right.\\&\left.+\frac{1}{u}\varphi(\tilde{m}_{K+L+1,K+L+1}((m_{L,L}(x,1)+m_{K,K}(y,1)-Z),(m_{L,L}(x,1)+m_{K,K}(y,1)-Z)^*)\right].\end{align*}

We thus have our starting point to apply spectral theory and make definable the modular group.  Since the various spectral theory maps are only defined at $L^2$ level, we only consider for a while either the corresponding forms and show that they give definable relations, or their composition with $F_K$ for which we can get operator norm estimates. The intermediate formulas bellow won't be part of the language $\mathcal{L}$ but we will show they are definable universally in any $W^*$ probability space. 

First, we note that $\psi_{K,L,u_1,...,u_n}(x,y)=\varphi(F_K(y^*).\Delta(u_1+\Delta)^{-1}...(u_n+\Delta)^{-1}(F_L(x)))$   for $u_1\neq ...\neq u_n\in \Q, u_i>0, K,L\geq \nu$ are definable in $T_{\sigma W^*}$ over $\mathcal{L}_\nu$ in using the resolvent relations $(u_1+\Delta)^{-1}(u_2+\Delta)^{-1}=\frac{1}{u_2-u_1}[(u_1+\Delta)^{-1}-(u_2+\Delta)^{-1}]$ iteratively. 
Finally for general $u_1,...,u_n$ the definability comes from (the same equation implying with standard bounds on operator norms of resolvents of positive operators) : \[T_{\sigma W^*}\models \sup_{(x,y)\in D_m^2}|\psi_{K,L,u_1,...,u_i,...,u_n}(x,y)-\psi_{K,L,u_1,...,u_i+\epsilon,...,u_n}(x,y)|\leq \frac{\epsilon m^2}{u_1...u_n(u_i+\epsilon)} .\]

Then we want to show the definability of a map corresponding in any model to \[A_{u,K}:=(u+\Delta)^{-1}F_K, u\in \Q,u>0,K\geq \nu.\] First note that $A_{u,K}=\frac{1}{2\sqrt{u}}\Delta^{-1/2}G_{ln(u)}F_K$ and thus maps (using again the proof of \cite[lemma 4.13]{HaagerupAndo}) $D_m(M)\to D_{\lceil \frac{3me^K}{2\sqrt{u}}\rceil}(M).$ 
But more, we have the relation \[A_{u,K}=u^{-1}F_K-u^{-1}\Delta(u+\Delta)^{-1}F_K=u^{-1}F_K-\frac{1}{2\sqrt{u}u}\Delta^{1/2}G_{ln(u)}F_K\] so that we know it maps $D_m(M)\to D_{\frac{m}{u}+ \frac{3me^K}{2\sqrt{u}u}}(M).$ (This better estimate will be used to make converge an integral in norm bellow while the first would not have been sufficient).

Thus we want to estimate uniformly over models $||A_{u,K}(x)-y||_\varphi^*$ and thus since \[\sup_{(x,y)\in D_m^2}| \ ||A_{u,K}(x)-y||_\varphi^*-||A_{u,K}(x)-F_L(y)||_\varphi^*\ |\leq \sup_{y\in D_m}||y-F_L(y)||_\varphi^*\to_{L\to \infty} 0,\] (from the integral definition, the fact that Fejer's kernel is a positive mollifier  and lemma \ref{ContinuityModular}) it suffices to check definability, for $L$ large enough, $u\neq 1,$ of
\begin{align*}&(||A_{u,K}(x)-F_L(y)||_\varphi^*)^2=||\Delta^{1/2}(1+\Delta)^{-1/2}(A_{u,K}(x)-F_L(y))\xi_\varphi||^2\\&=\Re\langle2 F_L(y)+(u+\Delta)^{-1}F_K(x),\Delta(1+\Delta)^{-1}(u+\Delta)^{-1}F_K(x)\rangle+(||F_L(y)||_\varphi^*)^2
\\&=2\Re \psi_{L,K,1,u}(x,y)+ \psi_{K,K,1,u,u}(x,x)+(||F_L(y)||_\varphi^*)^2\end{align*}
Thus our previous computation gives the definability of $A_{u,K}$ for every $u\in \Q,u>0$ over $\mathcal{L}_\nu$.

We then check the same kind of definability for \[B_{u,t,K}=(u+\Delta)^{-t}F_K, u,t\in \Q,u,t>0,K\geq \nu,t<1/2\]

One uses the relation in lemma \ref{EquationForms}, $B_{u,t,K}=\frac{\sin(t \pi)}{\pi}\int_0^\infty dv A_{u+v,K}v^{-t}$. From the bound before for $A$ one deduces for $y\in D_m$, (the second inequality by cuting the integral at $u$ and standard bounds such as $\frac{\sin(t \pi)}{\pi}\leq t$ and the last inequality valid only for $u<1,t<1/2$, case we will use later) \begin{align*}||B_{u,t,K}(y)||&\leq \frac{\sin(t \pi)}{\pi}m\int_0^\infty dv (\frac{v^{-t}}{u+v}+\frac{3e^Kv^{-t}}{2(u+v)\sqrt{u+v}})
\\&\leq m(\frac{u^{-t}}{(1-t)}+
\frac{3t(4-t)e^Ku^{-1/2-t}}{2(1+2t)(1-t)})\\&\leq m
8e^K u^{-1/2-t}\ \ \mathrm{for }\ u<1,t<1/2.\end{align*}
This gives a domain of value for $B_{u,t,K}$ and the definability is easy since the integral is a Riemann integral with uniform Lipschitz bound  in $v$ for $A_{u+v,K}$ coming again from the resolvent equation. We then extend  $B_{u,t,K}=(2B_{u,t/n,2K}-B_{u,t/n,K})^{\circ (n-1)}\circ B_{u,t/n,K}$ for other $n/2>t>0$ which is thus definable by composition and has also the same formula as the original $B_{u,t,K}$ in any model. Note also (the last inequality for $u<1,t<n/2$) $||B_{u,t,K}(y)||\leq 8^n.3^{n-1}e^{2(n-1)K+K}u^{-n/2-t}\|y\|.$

For a supplementary $\beta\in \Q, $ large enough, we then show similarly the definability of \[C_{u,\beta,K}:=(\beta+\ln(u+\Delta))^{-1}F_K=\int_0^\infty dve^{-\beta v} B_{u,v,K},\]
since for $u<1,y\in D_m(M)$ we have the bound: \[||C_{u,\beta,K}(y)||\leq m\sum_{k=0}^\infty\int_{(k-1)/2}^{k/2} dv e^{-(\beta+\ln(u)) v} 8^k.3^{k-1}e^{2(k-1)K+K}u^{-k/2}
\leq\frac{e^{(\beta+\ln(u)) /2}}{3(\beta+\ln(u))(1-e^{-1})}e^{-K}m.\]
by a crude bound by a geometric series $Ce^{-k}$ for $\beta+2\ln(u)>4K+2\ln(24)+2.$  The definability is again easy by Riemann integration. Indeed, for $v>t$, we have the lipshitzness bound deduced from spectral theory:  \[\|(B_{u,v,K}-B_{u,t,K})(y)\|_\varphi^*
\leq u^{-t}|v-t|\|\ln(u+\Delta)F_K(y)\|_\varphi\leq u^{-t}|v-t|\ln(u+e^K)\|y\|.\]

We now want to exponentiate (the map in front of $F_K$) in  \[\beta \ln(u+\Delta)C_{u,\beta,K}=\beta (F_K-\beta C_{u,\beta,K})\] of course in a definable way to get for $t\in \Q,$ $E_{u,\beta,t,K}:=e^{it \beta \ln(u+\Delta)(\beta+\ln(u+\Delta))^{-1}}F_K$, in any model, we have the formula (using all the maps leave stable the spectral algebras with spectrum $[-K,K]$, reached by the first $F_K$, so that all the $2F_{2(K+1)}-F_{K+1}$ appearing can be replaced by identity):

\[E_{u,\beta,t,K}=F_K+\sum_{k=0}^\infty\frac{(it)^k}{k!} [2\beta (F_{2K+2}-\beta C_{u,\beta,2K+2})-\beta (F_{K+1}-\beta C_{u,\beta,K+1})]^{\circ k}\circ [\beta (F_K-\beta C_{u,\beta,K})].\]
From this, for $\beta$ large enough as above, it is easy to get the domain of value of the map and the definability over $\mathcal{L}_\nu$. We now want to make $\beta\to\infty,u\to 0,K\to\infty.$

For this final step, we won't need anymore any composition of maps, thus we come back to definition as forms. From the above definability, one gets the same for the relation (from the equality in any model) : \[F_{K,L,u,\beta,t}(x,y):=\varphi(F_L(y^*).\Delta(1+\Delta)^{-1}E_{u,\beta,t,K}(x))=2\psi_{L,2K,1}(E_{u,\beta,t,K}(x),y)-\psi_{L,K,1}(E_{u,\beta,t,K}(x),y).\]

Now, we consider the potential limit for $\beta\to\infty$ : \[ F_{K,L,u,t}(x,y)=\varphi(F_L(y^*).\Delta(1+\Delta)^{-1}e^{it\ln(u+\Delta)}F_K(x))\]
One can use Duhamel's formula to get :
\begin{align*}&F_{K,L,u,t}(x,y)-F_{K,L,u,\beta,t}(x,y)=\int_0^tds[\varphi(F_L(y^*)\\&.\Delta(1+\Delta)^{-1}e^{i(t-s) \beta \ln(u+\Delta)(\beta+\ln(u+\Delta))^{-1}}\ln(u+\Delta)(1-\beta (\beta+\ln(u+\Delta))^{-1})e^{is\ln(u+\Delta)}F_K(x))].\end{align*}
Now since $\beta \ln(u+\Delta)(\beta+\ln(u+\Delta))^{-1},\ln(u+\Delta)$ are self-adjoint, one gets :
\begin{align*}&|F_{K,L,u,t}(x,y)-F_{K,L,u,\beta,t}(x,y)|\\&\leq \frac{t}{\beta} \|\Delta(1+\Delta)^{-1}\ln(u+\Delta) F_L(y)\|_\varphi\|\ln(u+\Delta)\beta (\beta+\ln(u+\Delta))^{-1}F_K(x)\|_\varphi
\\&\leq \frac{t}{\beta} 
\ln(u+e^L) \ln(u+e^K)\|y\|_\varphi\|x\|_\varphi.\end{align*}

One thus deduces from the limit $\beta\to\infty$, the definability of $F_{K,L,u,t}.$
Similarly we show the definability over $\mathcal{L}_\nu$ of \[ F_{K,L,t}(x,y)=\varphi(F_L(y^*).\Delta(1+\Delta)^{-1}\sigma_t^\varphi(F_K(x)))\]
in using Duhamel's formula again to get the inequality 
\begin{align*}&|F_{K,L,u,t}(x,y)-F_{K,L,t}(x,y)|
\leq \frac{tu}{u+e^{-K}} \|y\|_\varphi\|x\|_\varphi.\end{align*}

Finally, we make $K,L\to \infty$ to get the interesting limit $ F_{t}(x,y)=\varphi(y^*.\Delta(1+\Delta)^{-1}\sigma_t^\varphi(x))$ in noting that in any model, by Schwarz inequality : 
\begin{align*}&|F_{t}(x,y)-F_{K,L,t}(x,y)|
\leq \sqrt{\varphi([F_L(y)-y]^*.\Delta(1+\Delta)^{-1}[F_L(y)-y]))\varphi(\sigma_t^\varphi(x))^*.\Delta(1+\Delta)^{-1}\sigma_t^\varphi(x))}.\\&+\sqrt{\varphi([F_L(y)]^*.\Delta(1+\Delta)^{-1}[F_L(y)]))\varphi(\sigma_t^\varphi(F_K(x)-x))^*.\Delta(1+\Delta)^{-1}\sigma_t^\varphi(F_K(x)-x))}
\\&\leq \|F_L(y)-y\|_\varphi^*\|x\|_\varphi^*+ \|F_K(x)-x\|_\varphi^*\|y\|_\varphi^*\end{align*}
with the last inequality coming from the identification in the proof of lemma \ref{NormG} again.
One deduces the uniform convergence in $x,y$, uniformly over models, as $K,L\to\infty$ from lemma \ref{ContinuityModular}.

Finally, we are ready to prove the expected definability of $\sigma_t$ in $T_{\sigma W^*}$ over $\mathcal{L}_\nu$. For, we need by definition to check the definability of $d(\sigma_t(x),y)^2=\|\Delta^{1/2}(1+\Delta)^{-1/2}(\sigma_t(x)-y)\xi_\phi\|^2$ by the proof of lemma \ref{NormG}. But by commutation of $\sigma_t$ with $\Delta$ and invariance of $\varphi,$ we have the alternative formula :\[d(\sigma_t(x),y)^2=\Re(F_0(x,x)+F_0(y,y)-2F_t(x,y)),\]
And this gives definability by our previous work.

\begin{step}
Definability of other maps from $\mathcal{L}$ in $T_{\sigma W^*}$ over $\mathcal{L}_\nu$.
\end{step}
The inequality in axiom (16) then gives the axiomatization of $G_t$ from the one of $\sigma_t$ (using the triangular inequality to make $d(G_t(x),y)$ close to the corresponding distance function with $G_t$ replaced by Riemann sums), similarly for $F_{N,l}$ with (18), $\mathscr{E}_{\alpha,K,L}$ with (19). $\tau_{p,\lambda,N}$ is obvious by axiom (11).

The definability of $\lambda(.)$ for $\lambda\in \C-\Q[i]$ is of course easy by density. 

Finally, it remains to check the definability of $m_{K,L}$ for $K,L\in \N^*$ (if we don't have $K,L\geq \nu$). From the formula $m_{K,L}(x,y)=\tilde{m}_{O,P}(F_K(x),F_L(y))$ for $O\geq\max(K+1,\nu),P\geq\max(L+1,\nu)$ and with the notation of \eqref{tildem}, it suffices to show definability of $F_K$ (for $K<\nu$) and this is a special case of $F_{m,l}$ above. This concludes the definability of all supplementary maps.
\end{proof}

\begin{corollary}
For any $\nu\in \N$, $\sigma$-finite $W^*$ probability spaces are axiomatizable  in the language  $\mathcal{L}_\nu$ (by the not so explicit theory $T_{\sigma W^*}^\nu$).\end{corollary}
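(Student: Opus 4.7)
The plan is to derive this corollary as a formal consequence of the two main axiomatization results preceding it. By Theorem \ref{OcneanuTh}, the category of $\sigma$-finite $W^*$-probability spaces is axiomatized by $T_{\sigma W^*}$ in the full language $\mathcal{L}$ via the functor $\mathcal{M}$. By Theorem \ref{OcneanuThMinimal}, $T_{\sigma W^*}$ is an extension by definitions of $T_{\sigma W^*}^\nu$ in the smaller language $\mathcal{L}_\nu$. The goal is to paste these together via a reduct functor to $\mathcal{L}_\nu$.

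First I would define the candidate functor $\mathcal{M}^\nu$ as the composition of $\mathcal{M}$ with the forgetful (reduct) functor sending a model of $T_{\sigma W^*}$ in $\mathcal{L}$ to its $\mathcal{L}_\nu$-reduct. By definition of $T_{\sigma W^*}^\nu$ as the set of logical $\mathcal{L}_\nu$-consequences of $T_{\sigma W^*}$, the reduct of any $\mathcal{M}(M,\varphi)$ is automatically a model of $T_{\sigma W^*}^\nu$, and this reduct is determined up to isomorphism because $\mathcal{M}(M,\varphi)$ is.

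Second I would verify the two remaining conditions of axiomatizability. For surjectivity up to isomorphism: given any model $N$ of $T_{\sigma W^*}^\nu$, extension by definitions (\cite[Prop 9.28]{BenYBHU}, in its evident multidomain variant) provides a unique expansion $\tilde N$ of $N$ to a model of $T_{\sigma W^*}$ in $\mathcal{L}$, since every symbol of $\mathcal{L}\setminus \mathcal{L}_\nu$ is definable in $T_{\sigma W^*}$ over $\mathcal{L}_\nu$ by the previous theorem; then $\tilde N\simeq \mathcal{M}(M,\varphi)$ for some $(M,\varphi)$ by Theorem \ref{OcneanuTh}, hence $N\simeq \mathcal{M}^\nu(M,\varphi)$. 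For the morphism bijection $\mathrm{Hom}_{\mathcal{C}}(A,B)\simeq \mathrm{Hom}(\mathcal{M}^\nu(A),\mathcal{M}^\nu(B))$: any $\mathcal{L}_\nu$-morphism between reducts extends uniquely to an $\mathcal{L}$-morphism, since the added symbols are interpreted by $\mathcal{L}_\nu$-definable formulas that are preserved by any structure preserving morphism; combined with the bijection already provided by $\mathcal{M}$, this yields the bijection for $\mathcal{M}^\nu$.

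The substantive work has already been done in Theorem \ref{OcneanuThMinimal}; there is no real obstacle left here, only the bookkeeping of invoking extensions by definitions in the metric/multidomain setting. The one point that deserves a line of care is checking that the uniform continuity moduli and the domain-of-quantification assignments coming from the $\mathcal{L}_\nu$-definitions constructed in the proof of Theorem \ref{OcneanuThMinimal} match those declared for the symbols of $\mathcal{L}$; but this is exactly what was tracked explicitly in each sub-step of that proof (the domain of $B_{u,t,K}$, the norm bounds on $C_{u,\beta,K}$, the Duhamel error estimate for $F_{K,L,u,t}\to F_t$, etc.), so the verification is immediate.
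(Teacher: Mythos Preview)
Your argument is correct and is more direct than the paper's. The paper invokes the Keisler--Shelah criterion \cite[Prop 5.14]{BenYBHU}: stability under ultraproducts is immediate since $\mathcal{L}_\nu\subset\mathcal{L}$, and for ultraroots the paper starts from $(M,\varphi)^\omega$ (assumed to be a $W^*$-probability space, hence already carrying an $\mathcal{L}$-structure), invokes \cite[Corol 9.31]{BenYBHU} for uniqueness of the expansion, and then uses \cite[Prop 9.7, 9.25]{BenYBHU} to restrict the definable extra symbols along the elementary embedding $(M,\varphi)\preceq(M,\varphi)^\omega$. You instead use the model bijection coming from extension by definitions head-on: every model of $T_{\sigma W^*}^\nu$ expands uniquely to a model of $T_{\sigma W^*}$, so the reduct functor transports the axiomatization directly. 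Your route is cleaner and avoids the detour through ultrapowers; the paper's route has the virtue of matching the template used elsewhere (e.g.\ Theorem \ref{GrohThPredual}) where no extension-by-definitions result is available. The one point to be sure of is that the general model bijection for extensions by definitions is available in the multidomain metric setting; it is (the Cauchy estimates witnessing definability in Theorem \ref{OcneanuThMinimal} are $\mathcal{L}_\nu$-sentences, hence belong to $T_{\sigma W^*}^\nu$ and hold in any of its models), but the paper's approach sidesteps needing to cite this explicitly by only ever expanding a structure already known to be a reduct.
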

\begin{proof}
Since $T_{\sigma W^*}$ axiomatizes $\sigma$-finite $W^*$ probability spaces, $T_{\sigma W^*}^\nu$ is exactly the theory considered in the already used consequence of Keisler-Shelah theorem \cite[Prop 5.14]{BenYBHU}, namely the set of all $\mathcal{L}_\nu$-conditions satisfied in all $\sigma$-finite $W^*$ probability spaces. We thus check that this class of models is stable by ultraproducts and ultraroot. Of course, this class of spaces is stable by ultraproducts since the language is a restriction of the previous one. Conversely, if $(M,\varphi)$ is a $\mathcal{L}_\nu$-structure, such that $(M,\varphi)^\omega$ is a $\sigma$-finite $W^*$-probability space. By \cite[Corol 9.31]{BenYBHU}, $(M,\varphi)^\omega$ has a unique expansion to a model $[(M,\varphi)^\omega]_{\mathcal{L}}$.
By definition, the extension by definitions result implies that each symbol in $\mathcal{L}$ not in $\mathcal{L}_\nu$ gives a definable constant, function or predicate in $(M,\varphi)^\omega$ as an $\mathcal{L}_\nu$-structure in the sense of \cite[Def 9.1]{BenYBHU}. Thus by \cite[Prop 9.7,9.25]{BenYBHU} and the theorem of ultraproducts, one deduces from the elementary embedding $(M,\varphi) \preceq(M,\varphi)^\omega$ as $\mathcal{L}_\nu$ theory, the elementary embedding $(M,\varphi)_{\mathcal{L}}\preceq[(M,\varphi)^\omega]_{\mathcal{L}}$ for the restriction to $(M,\varphi)$ of the above extensions (note that for functions one uses \cite[Prop 9.25]{BenYBHU} for the restriction to be well-defined). Especially, $(M,\varphi)_{\mathcal{L}}$ is a model of $T_{\sigma W^*}$ thus a $\sigma$-finite $W^*$-probability space as expected.

\end{proof}
\begin{remark}
It is not difficult to see (in combining (16)-(19)-(20) in an awful formula corresponding to a discretization of a sixfold integral to get the form $\mathscr{E}_1$, with one integral to express it in terms of $\mathscr{E}_{2/3}$, one to express this one in terms of $\mathscr{E}_{1/3}$ and another one in terms of $\mathscr{E}_{0}$ and 3 more to express each $G_s$ in those formulas) that $\sigma$-finite $W^*$ probability spaces are axiomatizable by a $\forall\exists$-theory in the language $\mathcal{L}_\nu$ with all modular group $\sigma_t,t\in\Q$ added. If we even add $\tau_{p,\lambda,n}$ in the language (say with $N_i\geq \nu$ in $N$) one can even get back the universal axiomatization. The author conjectures that there is  no $\forall\exists$-axiomatization in the language $\mathcal{L}_\nu$ and no universal axiomatization (even with $\tau_{p,\lambda,n}$ added).
\end{remark}

\section{The Groh theory for preduals of von Neumann algebras}
\subsection{The explicit theory for preduals jointly with a weak-* dense subalgebra of its dual}
We will use the notion of 
 matrix-ordered operator space in the sense of \cite{Werner}, that includes duals of $C^*$ algebras (and preduals of $W^*$-algebras). We call language for matrix-ordered  operator spaces the union of the language for operator spaces and operator systems in \cite{GoldbringSinclair} without any unit symbol (and thus without their map $h_n$ containing a unit) but with an extra $\oplus$ operation on positive cones.
Matrix-ordered operator spaces are easily axiomatized in this language if we remove Werner's axiom $(M_0)$ (saying that a sum of positive is $0$ if and only if each is $0$). We will recover this axiom later, we only keep the axiom saying that diagonal direct sum of positive are positive $(M_1)$ and conjugation by matrices keeps positivity $(M_2)$. More precisely
we also add in any cone sort ,for any polynomial in one variable positive on $\R_+$, the function $\tau_p$ with an axiom saying $\tau_p(x)=p(d(x,0))x$ with domains $\tau_p:D_n\to D_{\lceil\sup_{t\in [0,n]} |p(x)x|\rceil}$ so that approximating $x/\max(1,d(x,0))$ by such polynomials, one obtains as in \cite{FarahII} that the norm unit ball is indeed $D_n.$

We call tracial matrix-ordered operator spaces $X$ a matrix-ordered operator space with a completely positive linear functional $tr$ such that for $\phi\in M_n(X)$ positive, $||tr(\phi)||_{M_n(\C)}=||\phi||_{M_n(X)}.$ For instance, by Stinespring's theorem, the predual of a von Neumann algebra  is such a space with Haagerup's trace : $tr(\phi)=\phi(1).$ Note that, since $M_n(X)$ is a metric space, any $x,y\in M_n(X)$ positive then $tr(x+y)$ positive and is thus $0$ only if $tr(x)=tr(y)=0$ and by the norm relation $x=y=0,$ especially one recovers for free the axiom $(M_0)$ so that tracial matrix-ordered operator spaces are easily (explicitly) axiomatizable.

Groh's construction of a von Neumann algebra ultrapower in \cite[Prop 2.2]{Groh}, as dual of a Banach space ultrapower of preduals, suggests that predual of von Neumann algebras should be an axiomatizable class. Moreover, the proof of the construction of the ultrapower gives a theory for a pair $(M, X)$ of a $C^*$ algebra and a predual of a von Neumann algebra with $M\subset X^*$ weak-* dense. We will thus explicitly axiomatize the theory of these pairs.

We now explain the following theory but we consider $X$ as a tracial matrix-ordered operator space to get our stronger axiomatizability result bellow for preduals alone.

We consider a theory with five (families of) sorts : $M_n(V),$ each with the language of $C^*$-algebras from \cite{FarahII} with domains of quantification $D_m(M_n(V))$, with also the language of operator spaces (we will call this the language of $C^*$-algebras as operator spaces and in the corresponding theory the identification of the product with the matrix product will be written) and the sorts $(M_{p,q}(\mathcal{S}),\mathcal{C}_n,M_{m,n}(\C),\R_{\geq 0})$ with the language of 
tracial matrix-ordered operator spaces explained above, with domains of quantification written $D_m(.)$ with dot replaced by the sort. We add the following data :

\begin{itemize}
\item[$\bullet$]Unary function symbols $|.|: D_m(M_{n,n}(\mathcal{S}))\to D_m(\mathcal{C}_n).$ 
\item[$\bullet$] Binary functions symbols : \[m_{N,n}^{l}:(D_l(M_N(V)))\times D_m(M_n(S))\to D_{lm}( M_{Nn}(S)),\]  \[m_{n,N}^{r}:D_m(M_n(S))\times (D_l(M_N(V)))\to D_{lm}( M_{Nn}(S)),\] interpreted as module (tensorial at matrix level) action of $M_N(U)$ on $M_n(\mathcal{S}))$ and $m^{i}_1:=m^{i}_{1,1}.$
\item[$\bullet$] a binary function symbol $B:D_m(M_n(S))\times D_l(M_n(V))\to D_{n^2ml}(\C) $ (see axiom (30) for its meaning)
\item[$\bullet$] Unary function symbols $\pi^{(n)}_{ij}:D_m(M_{2n}(S))\to D_m(M_{n}(S)), i,j\in \{1,2\} $ (meaning a shorter notation for block projections), and $\exp i\Re:D_m(V)\to D_1(V), \arctan\Re:D_m(V)\to D_2(V), r:D_m(V)\to D_1(V)$ (meaning $\exp i\Re(a)=exp(i(a+a^*)/2), \arctan\Re(a)=\arctan((a+a^*)/2), r(a)=\frac{1}{1+(a+a^*)^2/4}$).
\item[$\bullet$] Unary function symbols $tr_n:D_m(M_{n}(S))\to D_m(M_{n}(\C)) $ (for $tr\ot id_n$).
\end{itemize}

Of course, some maps as $\exp i\Re,arctan\Re$ are not necessary, but will be convenient to produce cheap unitaries. We will define them by classical series valid in any $C^*$-algebra (Euler's series converging on $\R$ uniformly on segments will be convenient for $arctan(t)=\frac{t}{1+t^2}\sum_{n=0}^\infty\frac{(n!)^24^n}{(2n+1)!}\left(\frac{t^2}{1+t^2}\right)^n $).

Before stating any axiom, we have to explicit uniform continuity bounds.
Note that for $\phi,\varphi\in M_n(X)$ we use the definition for $B(\phi,.)=\hat{\phi}$ in (30) bellow :\begin{align*}|||\varphi|-|\phi|||_{cb}=&\sup_{k,||a||_{M_k(X^*)}\leq 1}||(|\varphi|-|\phi|)(a)||_{M_{kn}(\C)}\\&=\sup_{T\in D_1(TC_{nk})}|B(i_n(|\varphi|)-i_n(|\phi|),(\sum_{j,J=1}^kT_{(i,j),(I,J)}a_{j,J})_{iI})|\\&\leq \|\ |\widehat{\varphi}|-|\widehat{\phi}|\ \|\ \sup_{k,||a||_{M_k(X^*)}\leq 1}\sup_{T\in D_1(TC_{nk})}||(\sum_{j,J=1}^kT_{(i,j),(I,J)}a_{j,J})_{iI}||_{M_n(M)}\
\\&\leq c_n \left(2\sqrt{||\widehat{\varphi}||||\widehat{\varphi}-\widehat{\phi}||}+||\widehat{\varphi}-\widehat{\phi}||\right)\end{align*}

with  the last inequality coming e.g. from the proof of \cite[Prop III.4.10]{TakesakiBook} and with $c_n=\sup_{||a||_{TC_n\widehat{\ot}M}\leq 1}||a||_{M_n(M)}$ is a universal constant\footnote{this is quite standard it is finite since for instance from \cite[section 9.3]{EffrosRuan} in terms of row and column Hilbert spaces :$TC_n\widehat{\ot}M\simeq R_n\ot_hM\ot_hC_n\to C_n\ot_hM\ot_hR_n\simeq M_n(M)$  coming from identity $CB$ maps $R_n\to C_n$, $C_n\to R_n$ so that it is even known that $c_n\leq n$}  used thanks to the bound\footnote{coming from the canonical completely contractive map $(TC_n\widehat{\ot}TC_k)\widehat{\ot}(M_k\ot_{min} M)\simeq TC_n\widehat{\ot}(TC_k\widehat{\ot}(M_k\ot_{min} M))\to TC_n\widehat{\ot}((TC_k\widehat{\ot}M_k)\ot_{min} M))\to TC_n\widehat{\ot} M$  using in the middle the shuffle map $TC_k\widehat{\ot}(M_k\ot_{min} M))\simeq R_k\ot_h(M_k\ot_{min} M)\ot_hC_k\to( R_k\ot_hM_k\ot_hC_k)\ot_{min} M$ from \cite[Th 5.15]{PisierBook}}
\[||(\sum_{j,J=1}^kT_{(i,j),(I,J)}a_{j,J})_{iI}||_{M_n(M)}\leq c_n ||(\sum_{j,J=1}^kT_{(i,j),(I,J)}a_{j,J})_{iI}||_{TC_n\widehat{\ot}M}\leq c_n ||T||_{TC_n\widehat{\ot}TC_k}||a||_{M_k(M)}.\]
  Using then $||\widehat{\varphi}||\leq n ||\varphi||_{cb}$ one thus gets an explicit uniform continuity function for $|.|$ that we take in the language setting. This was the only non-obvious uniformly continuous map.

We consider the following axioms :

\begin{enumerate}
\item[(28)] For $x,y\in D_l(M_N(V)),z,t\in D_m(M_n(S)),\lambda\in\C$, any $N,n\in \N^*$ \[m_{N,n}^l(\lambda x+y,z)= \lambda m_{N,n}^l( x,z)+m_{N,n}^l( ,z), m_{n,N}^l(x,\lambda z+t)= \lambda m_{n,N}^l(x,z)+m_{n,N}^l(x,t),\] %
\[m_n^r(\lambda z+t,x)= \lambda m_n^r( z,x)+m_n^r( t,x), m_n^r(z,\lambda x+y)= \lambda m_n^r( z,x)+m_n^r( z,y),\] 
and $m_{1,n}^l(1,z)=z=m_{n,1}^r( z,1),$
\[\pi_{nN}^{(i,j),(I,J)}(m_{N,n}^l(x,z))=m_1^l(\pi_N^{iI}(x),\pi_n^{jJ}(z))\]
\[m_1^l( xy,z)=m_1^l( x,m_1^l( y,z)),
 m_1^r( z,xy)=m_1^r( m_1^r( z,x),y), 
m_1^l( x,m_1^r( z,y))=m_1^r(  m_1^l( x,z),y), \]
\[ \sup_{a\in D_m(V)}\max(d(r(a)[1+(a+a^*)^2/4],1),d([1+(a+a^*)^2/4]r(a),1))=0,\]
\[ \sup_{a\in D_m(V)}\max(0,d(\exp i\Re(a),\sum_{k=0}^N\frac{i^k}{2^kk!}(a+a^*)^k)-exp(m)+\sum_{k=0}^N\frac{m^k}{k!})=0,\]
\begin{align*} \sup_{a\in D_m(V)}\max(0,&d(\arctan\Re(a),\frac{a+a^*}{2}r(a)\sum_{n=0}^N\frac{(n!)^24^n}{(2n+1)!}\left(\frac{(a+a^*)^2}{4}r(a)\right)^n \\&-(1+m^2)\arctan(m)+m\sum_{n=0}^N\frac{(n!)^24^n}{(2n+1)!}\left(\frac{m^2}{1+m^2}\right)^n )=0.\end{align*}
\item[(29)] $tr(m_1^l( x,z))=tr(m_1^r( z,x))$ and 
\[\sup_{x\in D_n(V)}|d(x,0)-\sup_{y\in D_1(S)}|tr(m_1^l( x,y))|\ |=0,\]
\[\sup_{x\in D_n(\mathcal{S})}|d(x,0)-\sup_{y\in D_1(V)}|tr(m_1^r( x,y))|\ |=0,\]
\item[(30)] For $x\in C_n, y\in M_n(S), a\in D_l(M_n(V))$, $|i_n(x)|=x$, \[B(y,a)=\sum_{i,j=1}^ntr(m_1^l(\pi_n^{ij}(a),\pi_n^{ij}(y))),\]
\[\sup_{y\in D_l(M_n(S))}\sup_{a\in D_k(M_n(V))}\max\left[0,|B(y,a)|^2- \sup_{b\in D_1(M_n(V))}|B(y,b)| \ B(i_n(|y|),aa^*)\right],\]
\[ \sup_{y\in D_l(M_n(S)}\left|\sup_{b\in D_1(M_n(V))}|B(y,b)|- \sup_{b\in D_1(M_n(V))}|B(i_n(|y|),b)|\right|=0,\]

\item[(31)] Axioms for $tr_n,\pi^{(n)}_{ij}$ and :

\[ \sup_{y\in D_l(M_n(\mathcal{S}))}\left|d(y,0)- \inf_{x\in D_{5l}(\mathcal{C}_{2n})}\left[\max(||tr_n(\pi^{(n)}_{11}(x))||,||tr_n(\pi^{(n)}_{22}(x)||)+d(y,\pi^{(n)}_{12}(x))\right]\right|=0,\]
\[ \sup_{y\in D_l(M_n(\mathcal{S}))}\sup_{a\in D_k(M_N(V))}\max\left(0,||(tr_{nN}(m_{N,n}^l(a,y)))||_{M_{nN}(\C)}-d(y,0)d(a,0)\right)=0.\]
\end{enumerate}

We will also consider a last axiom depending on a parameter $d\in[0,2]$ for the diameter of state space to recover the results of \cite[Section 6.2]{HaagerupAndo} in our continuous logic setting.
\begin{enumerate}
\item[(32)] For any $m\in\N^*$, and write for short $f_r(a)=\exp i\Re(2 \arctan\Re(a))$ \begin{align*}&\left|\sup_{x,y\in D_1(\mathcal{C}_1)}\inf_{a\in D_m(V)} d(i_1(x)tr(i_1(y)),m_1^l(f_r(a),m_1^r(i_1(y)tr(i_1(x)),[f_r(a)]^*))-d\right|\leq 4\sin(2\arctan(m))\end{align*}

For any $N\in\N^*$, \[\inf_{(x_1,...,x_N)\in (D_1(V))^N}\left[\max(\max_{i=1,...,N}(\max(d(x_ix_i^*,(x_ix_i^*)^2)), |d(x_ix_i^*,0)-1|), \max_{i\neq j=1,...,N,}(d(0,x_ix_i^*x_jx_j^*))\right]=0.\]
\end{enumerate}

Let us now explain how  a pair $(M,X)$ of a weak-* dense unital $C^*$ subalgebra $M$  in a von Neumann algebra $X^*$ with predual $X$ gives rise to a model of this theory. Of course $m_1^l, m_1^r$ are induced by the usual actions of $X^*$ on $X$ for $x,y\in X^*,\varphi\in X$,  $m_1^l(x,\varphi)(y)=\varphi(yx)$ and then $m_n^l$ is defined entry-wise. $B(\varphi,.)=\hat{\varphi}$ is the map defined in (30) or \cite[p 16]{BrownOzawa} that converts $M_n(X)=NCB(X^*,M_n(\C))$ into maps in $(M_n(X^*))^*$. Being a $CP$ map i.e. in the cone $C_n$ is then equivalent to have $\hat{\varphi}$ positive as a functional by their proposition 1.5.14. The axiom (30) then states that $\widehat{|\varphi|}=|\widehat{\varphi}|$ in the sense of absolute values in $(M_n(X^*))_*$ characterized by \cite[Prop III.4.6]{TakesakiBook}. By strong-* density of $M$ in $X^*$ the defining relation is only checked on $M$ and since $\hat{.}$ is a bijection $|.|$ is defined uniquely and has indeed its range in completely positive maps and all of (30) is satisfied.  (28),(29) are then obvious and (31) comes from the identification of the completely bounded norm with the decomposable norm (see e.g. \cite[lemma 5.4.3]{EffrosRuan}).
Indeed, we have from this lemma  \[d(y,0)=||y||_{cb}\leq ||y-x||_{cb}+||x||_{dec}=||y-x||_{cb}+\max(||\psi_1||,||\psi_2||)\] when $\left(\begin{array}{cc}\psi_1&x\\x^*&\psi_2\end{array}\right)$ is completely positive i.e. in $\mathcal{C}_{2n}$.
Thus taking the infemum \[d(y,0)\leq \inf_{x\in (\mathcal{C}_{2n})}\left[\max(||tr_n(\pi^{(n)}_{11}(x))||,||tr_n(\pi^{(n)}_{22}(x)||)+d(y,\pi^{(n)}_{12}(x))\right]\]
and since $||y||_{cb}=||y||_{dec}$ the infemum is reached when $\pi^{(n)}_{12}(x)=y$ and we can take $||\psi_i||=||y||$ giving especially the equality stated in (31). The second part states $||y||_{cb}\leq d(y,0)$ which is satisfied when we have equality.

The second equation in (32) states that there are $N$ orthogonal non-zero projections and thus axiomatizes infinite dimensionality. Let us also interpret the first equation in axiom (32).

\begin{lemma}
For a pair $(M,X)$ as above the first equation in (32) is equivalent to the equalty on the state space diameter  of $X^*$: $d(X^*)=d$. 
\end{lemma}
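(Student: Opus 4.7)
The plan is to unfold the distance in the axiom into a standard operator-algebraic quantity---the unitary-orbit diameter of the normal state space of $X^*$---and then extract the explicit rate $4\sin(2\arctan(m))$ from a spectral truncation of unitaries.

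First, using $tr(i_1(x))=x(1)=\|x\|$ for $x,y\in\mathcal{C}_1$ positive, and the conventions $m_1^l(b,\varphi)(c)=\varphi(cb)$ and $m_1^r(\varphi,b)(c)=\varphi(bc)$, the second argument of the distance simplifies to the functional $b\mapsto\|x\|\,y(u^*bu)$, i.e.\ $\|x\|\cdot(uyu^*)$ where $u=f_r(a)$ and $uyu^*$ denotes standard unitary conjugation of $y$. Writing $x=\|x\|\varphi$ and $y=\|y\|\psi$ with normal states $\varphi,\psi$, the distance becomes $\|x\|\|y\|\,\|\varphi-u\psi u^*\|$, so the sup over $D_1(\mathcal{C}_1)^2$ is achieved at $\|x\|=\|y\|=1$. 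Moreover, as $a$ ranges over $D_m(V)$, the element $2\arctan(\Re a)$ ranges over all self-adjoint $h\in M$ with $\|h\|\leq T_m:=2\arctan(m)$ (the inverse being $a=\tan(h/2)\in D_m(V)$). Hence the left-hand side of the axiom equals
\[\sup_{\varphi,\psi\in S(X^*)}\inf_{h\in M,\,h=h^*,\,\|h\|\leq T_m}\|\varphi-e^{ih}\psi e^{-ih}\|.\]

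Next, Kaplansky's density theorem gives that $\mathcal{U}(M)$ is strong-$*$ dense in $\mathcal{U}(X^*)$, and for each normal state $\psi$ the map $u\mapsto u\psi u^*$ is continuous from the strong-$*$ topology on bounded subsets of $X^*$ into the norm topology on $X$ (a classical consequence of the fact that weakly convergent bounded nets of positive predual elements with constant norm converge in norm). Consequently, for each fixed pair $\varphi,\psi$ of normal states,
\[\inf_{u\in\mathcal{U}(M)}\|\varphi-u\psi u^*\|=\inf_{u\in\mathcal{U}(X^*)}\|\varphi-u\psi u^*\|,\]
and taking the supremum over $\varphi,\psi$ yields $d(X^*)$.

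To obtain the explicit rate, given $u=e^{ih}\in\mathcal{U}(M)$ with $-1\notin\sigma(u)$ (a norm-dense condition within $\mathcal{U}(M)$ by an arbitrarily small functional-calculus perturbation) so that $h\in M$ is self-adjoint with $\sigma(h)\subset(-\pi,\pi)$, form the spectral truncation $h'=\min(\max(h,-T_m),T_m)\in M$ via continuous functional calculus, producing $u'=e^{ih'}\in\{f_r(a):a\in D_m(V)\}$. The spectral estimate $|e^{i\lambda}-e^{iT_m\,\mathrm{sgn}(\lambda)}|\leq 2\cos(T_m/2)=2/\sqrt{1+m^2}$ for $|\lambda|\leq\pi$ gives $\|u-u'\|\leq 2/\sqrt{1+m^2}$, whence $\|u\psi u^*-u'\psi(u')^*\|\leq 2\|u-u'\|\leq 4/\sqrt{1+m^2}\leq 4\sin(2\arctan(m))$ for $m\geq 1$. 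Putting the two steps together,
\[d(X^*)\leq\sup_{\varphi,\psi}\inf_{a\in D_m(V)}\|\varphi-f_r(a)\psi f_r(a)^*\|\leq d(X^*)+4\sin(2\arctan(m)),\]
so the first equation of axiom (32) is equivalent (by letting $m\to\infty$) to $d=d(X^*)$.

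The main obstacle is reconciling the non-quantitative Kaplansky approximation with the explicit truncation rate in a single clean estimate. The resolution is to use Kaplansky only to identify the two infima over $\mathcal{U}(M)$ and $\mathcal{U}(X^*)$---which needs no explicit rate, since it concerns equality of infima, not approximation---and to confine the quantitative step to the further restriction from $\mathcal{U}(M)$ to $\{f_r(a):a\in D_m(V)\}$, which is a purely $C^*$-algebraic functional-calculus truncation taking place entirely inside $M$.
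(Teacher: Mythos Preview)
Your argument is correct and follows essentially the same route as the paper's proof: both reduce the expression in axiom (32) to a unitary-orbit distance on normal states and then control the gap between the restricted family $\{f_r(a):a\in D_m\}$ and the full unitary group by a functional-calculus truncation, with Kaplansky density bridging $M$ and $X^*$. The only organizational difference is that the paper applies Kaplansky at the level of the parameter $a\in D_m(V)\subset D_m(X^*)$ and then truncates inside $X^*$, whereas you invoke the unitary form of Kaplansky to identify $\inf_{\mathcal{U}(M)}$ with $\inf_{\mathcal{U}(X^*)}$ and perform the spectral truncation entirely inside $M$; your constant $2\cos(\arctan m)=2/\sqrt{1+m^2}$ is in fact slightly sharper than the paper's $2\sin(2\arctan m)$, and your final inequality $4/\sqrt{1+m^2}\leq 4\sin(2\arctan m)$ for $m\geq 1$ correctly subsumes it into the stated bound. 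One cosmetic remark: your justification of the strong-$*$ to norm continuity of $u\mapsto u\psi u^*$ via ``weakly convergent positive predual nets of constant norm converge in norm'' is valid but indirect; the one-line estimate $\|x\psi\|\leq \psi(x^*x)^{1/2}$ for a normal state $\psi$ gives it more transparently.
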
 \begin{proof}Note that, in our formula, we only look  at unitaries $u=\exp i\Re(2 \arctan\Re(a))$ so that for $x,y\in D_1(X)$: \begin{align*}\delta_m(x,y)&:=\inf_{a\in D_m(M)} d(i_1(x),m_1^l(\exp i\Re(2 \arctan\Re(a)),m_1^r(i_1(y),\exp i\Re(-2 \arctan\Re(a)))))\\&\geq \delta(x,y):=\inf_{u\in D_1(X^*), uu^*=1=u^*u} d(i_1(x),m_1^l(u,m_1^r(i_1(y),u^*)))\end{align*}
But, since by functional calculus, any unitary $u$ is close to a unitary $u_m=\exp i\Re(2 \arctan\Re(a_m))$ with $a_m\in  D_m(X^*)$ and explicitly with $||u-u_m||\leq 2\sin(2\arctan(m)),$ one gets using Kaplansky density theorem for the first equality and $f(m)=4\sin(2\arctan(m)):$ \[\delta_m(x,y)=\inf_{a\in D_m(X^*)} d(x,\exp i\Re(2 \arctan\Re(a)).y.\exp i\Re(-2 \arctan\Re(a)))))\leq \delta(x,y)+f(m).\]
One thus obtains that \[\left|\sup_{x,y\in D_1(\mathcal{C}_1)}\delta_m(i_1(x)tr(i_1(y)),i_1(y)tr(i_1(x)))-\sup_{x,y\in D_1(\mathcal{C}_1)}\delta(i_1(x)tr(i_1(y)),i_1(y)tr(i_1(x)))\right|\leq f(m).\]
Finally, it is easy to see that the state space diameter of $X^*$ is  \[\sup_{x,y\in D_1(\mathcal{C}_1)}\delta(i_1(x)tr(i_1(y)),i_1(y)tr(i_1(x)))=\sup_{x,y\in D_1(\mathcal{C}_1), tr(x)=tr(y)=1}\delta(i_1(x),i_1(y)))\]
 and since $f(m)\to_{m\to \infty} 0$ (32) is equivalent to the computation of the state space diameter to be equal to $d$. 
\end{proof} 
 We are now ready to prove our axiomatization result. We consider the category whose objects are pairs $(M,X)$ of a weak-* dense $C^*$ algebra $M$ of $X^*$ and a predual of a von Neumann algebra $X$, and morphism are any  pair of a $*$-homomorphism $M_1\to M_2$ extending to a weak-* continuous $*$-homomorphism $X_1^*\to X_2^*$ onto a subalgebra having a normal conditional expectation from $X_2^*$ which is the dual map of a completely positive tracial map of preduals $X_1\to X_2$.

\begin{theorem}\label{GrohTh}
The class of couples $(M,X)$ of a weak-* dense $C^*$ algebra $M$ of $X^*$ and a predual of a von Neumann algebra $X$ is axiomatizable by the theory $T_{C^*,(W^*)_*}$ consisting of axioms of $C^*$ algebras as operator space for $M$, tracial matrix-ordered operator spaces for $X$ and (28)-(31).
The subclass with $X$ such that $X^*$ is a type $III_\lambda$-factor  for a fixed $0<\lambda\leq 1$ is axiomatizable by the theory $T_{C^*,(III_\lambda)_*}$ consisting of $T_{C^*,(W^*)_*}$ and (32) with $d=2\frac{1-\sqrt{\lambda}}{1+\sqrt{\lambda}}.$
\end{theorem}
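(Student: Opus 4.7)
The forward direction (every such pair $(M,X)$ gives a model of $T_{C^*,(W^*)_*}$) has essentially been verified in the discussion preceding the theorem. My plan for the converse is to recover $X^*$ as a central corner of the $C^*$-bidual $M^{**}$, using the rigidity of the axioms to force $X$ to be the full predual of this $W^*$-algebra; the $III_\lambda$ refinement will then follow by interpreting axiom (32) as a state-space diameter constraint.

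First, I would use the second equation of axiom (29) to obtain an isometric embedding $\iota : X \to M^*$ defined by $\iota(y)(m) := tr(m_1^l(m,y))$, and check using axioms (28) and the trace symmetry $tr(m_1^l(x,z))=tr(m_1^r(z,x))$ from (29) that $\iota$ is an $M$-bimodule map. Passing to $M^{**}$, I would argue that the annihilator $X^\perp$ of $\iota(X)$ is a weak-$*$ closed two-sided ideal: it is a left/right $M$-ideal from the bimodule structure, and separate weak-$*$ continuity of multiplication together with weak-$*$ density of $M \subset M^{**}$ upgrades this to $M^{**}$-invariance. Hence $X^\perp = (1-p)M^{**}$ for a unique central projection $p$, and setting $X^* := pM^{**}$ the bipolar theorem in the dual pair $(M^{**},M^*)$ would yield $\iota(X) = (X^\perp)^\perp = pM^* = (pM^{**})_*$, identifying $X$ with the predual of $X^*$. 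The composition $M \hookrightarrow M^{**} \twoheadrightarrow pM^{**}$ is then isometric (axiom (29)) and weak-$*$ dense (density of $M \subset M^{**}$).

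The remaining matrix-ordered and operator-space data would be matched using axiom (30) (via the identification $B(|y|)=|B(y)|$, so $\iota(X)$ is closed under polar decomposition in $M^*$) and axiom (31) (cb-norm as decomposable norm, per \cite[Lemma 5.4.3]{EffrosRuan}); the symbol $tr$ coincides with the Haagerup trace $\varphi\mapsto \varphi(1)$ via (29) at $y=1$. For morphism correspondence, a model morphism induces a $*$-homomorphism $M_1\to M_2$ and a tracial CP map $X_1\to X_2$, whose transpose gives a weak-$*$ continuous $*$-homomorphism of $W^*$-algebras preserving the bimodule structure, and Takesaki's theorem \cite[Th IX.4.2]{TakesakiBook} supplies the state-preserving normal conditional expectation exactly as in the proof of Theorem \ref{OcneanuTh}.

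For the $III_\lambda$ case, the second part of axiom (32) forces $X^*$ to be infinite-dimensional via $N$ orthogonal nonzero projections for all $N$. The first part, via the lemma preceding the theorem and Kaplansky approximation of arbitrary unitaries by $\exp i\Re(2\arctan\Re(a))$ with $a \in D_m(M)$ up to error $4\sin(2\arctan(m))$, becomes the identity $d(X^*)=d=2(1-\sqrt{\lambda})/(1+\sqrt{\lambda})$ for the state-space (unitary-orbit) diameter. Factoriality follows because any non-factor has orthogonal central projections yielding diameter $2$; the diameter characterization of \cite[Section 6.2]{HaagerupAndo} then identifies infinite-dimensional factors of this prescribed diameter with type $III_\lambda$ factors ($0<\lambda\leq 1$). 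The main obstacle I anticipate is in the second paragraph: showing $\iota(X)$ is the \emph{full} predual $pM^*$ rather than just a weak-$*$ dense subspace --- this is exactly what axiom (30) is tailored to guarantee, by keeping $X$ closed under polar decomposition so that the bipolar identity closes up correctly.
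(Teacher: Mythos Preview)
Your approach is essentially the same as the paper's: embed $X\hookrightarrow M^*$ isometrically via the pairing in (29), recognise $X$ as a closed two-sided $M$-invariant subspace, and conclude $X=M^*e$ for a central projection $e\in M^{**}$, whence $X^*=M^{**}e$. The paper cites \cite[Th~III.2.7]{TakesakiBook} for this directly; your annihilator/bipolar route is the dual formulation of the same fact.

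Two points are worth correcting. First, your stated worry about $\iota(X)=pM^*$ is misdiagnosed. The equality $(X^\perp)_\perp=\iota(X)$ holds because $\iota(X)$ is \emph{norm-closed} in $M^*$, and this comes from completeness of $X$ as a model (balls are complete and the domains of quantification are the balls), not from axiom~(30). Axiom~(30) plays a different role: it identifies the cone $\mathcal{C}_n$ with $NCP(X^*,M_n(\C))$ via the relation $\widehat{i_n(|\varphi|)}=|\widehat{\varphi}|$, so that the matrix-order structure on $X$ agrees with the one induced by the von Neumann algebra $X^*$. Second, for the morphism correspondence the paper does not invoke \cite[Th~IX.4.2]{TakesakiBook}: there is no distinguished state in this theory, so the modular-theoretic criterion for conditional expectations is not available. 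Instead, given a model morphism $(i,j)$, the paper shows that the dual map $j^*:X_2^*\to X_1^*$ restricted to the von Neumann subalgebra $N$ generated by $i(M_1)$ is a surjective $*$-isomorphism onto $X_1^*$, and then $\alpha\circ j^*$ with $\alpha=(j^*|_N)^{-1}$ is the required normal conditional expectation. Your $III_\lambda$ paragraph matches the paper.
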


\begin{proof}
We explained how a couple gives a model of $T_{C^*,(W^*)_*}$. 
Moreover, a model homomorphism $(i,j):(M_1,X_1)\to (M_2,X_2)$, then $j^*:X_2^*\to X_1^*$ is a completely positive unital contraction and when restricted to $M_2$ satisfies for all $\phi\in X_2, c \in M_1$ and $j(c.\phi)=i(c).j(\phi)$, so that for $d\in X_2^*$ \[tr(j^*(d.i(c)).\phi)=tr(d.i(c).j(\phi))=tr(dj(m(c,\phi)))=tr(dj(m(c,\phi)))=tr((j^*(d).c).\phi)\] and thus, since $\phi$ arbitrary in $X_1$ , $j^*(d.i(c))=j^*(d).c$ in $X_1$ for $c\in M_1$ and similarly $j^*(i(c).d)=c.j^*(d)$.

 Consider $N$ the von Neumann algebra generated by $i(M_1)\subset X_2^*$ the restriction of $j^*$ satisfies $j^*(i(x).i(y))=x.(j^*(i(y)))=(j^*(i(x))).(j^*(i(y)))$ so that $j^*$ is a weak-* continuous $*$-homomorphism on $N$. Its image is a von Neumann subalgebra of $X_1^*$ containing $j^*(i(M_1))=M_1$ thus $X_1^*$ so that $j^*|_N:N\simeq X_1^*$ and its inverse $\alpha$ is thus a von Neumann algebra isomorphism extending $i$ and $\alpha\circ j^* $ is a completely positive weak-* continuous projection $X_2^*\to N\simeq X_1^*.$ Similarly, consider  any $*$-homomorphism $i:M_1\to M_2$ extending to a weak-* continuous $*$-homomorphism $X_1^*\to X_2^*$ onto a subalgebra having a normal conditional expectation from $X_2^*\to X_1^*$ with predual map $j$.  It gives rise to a model morphism since $j^*$ is completely positive tracial and the conditional expectation property implies $j^*(i(c).d.i(c'))=c.j^*(d).c'$ which implies $j(c.\phi.c')=i(c).j(\phi).i(c')$, $c,c'\in M_1,\phi\in X_1$. The remaining structure is easily preserved by $(i,j)$, for instance $j(|\phi|)$ satisfies the characterizing properties of $|j(\phi)|$ in (30).

Conversely, let $(M,X)$ the underlying sets for such a model, so that $M_n(M)$ are matrix $C^*$-algebras over $M$ with their norm and operator norm balls as domains of quantification and $X$ is tracial matrix-ordered operator space with its norm balls as domains of quantification. (29) gives a duality pairing and thus isometric embeddings $M\to X^*, X\to M^*.$ $M^*$ is the predual of the von Neumann algebra $M^{**}$ and using (28), for $x\in X, a,b\in M$ the duality pairing \[\langle ab, x\rangle=tr(m_1^l( ab,x))=\langle a, m_1^l( b,x)\rangle\]
and since $a$ is arbitrary we can replace it by density by $a\in M^{**}$ so that computed in $M^*$, the product $b.x=m_1^l( b,x)$ and similarly $x.b=m_1^r( x,b)$. Since $X$ is complete (as any model, which has complete balls, and for which metric balls are domains of quantifications) it is closed in $M^*$ and both a right and left ideal, thus \cite[Th III.2.7]{TakesakiBook}, $X= M^* e$ for a central projection in $M^{**}$ and thus $X^*=M^{**}e$ is a von Neumann subalgebra of $M^{**}$.
Note that  the previously isometric inclusion $M\subset X^*$ is a $*$-algebra homomorphism by definition of the $*$-algebra structure on $X^*$ induced from $M^{**}$. Indeed we have using (28) the relations for $a\in M, x\in X$  : \[\langle a^*, x\rangle=tr(m_1^l( a^*,x))=\overline{\langle a, x^*\rangle}\] defines $x^*\in M^*$ and reading backwards the formula define $a^*\in M^{**}$ thus in $X^*$.  Moreover the canonical map  $M\to M^{**}\to M^{**}e$ is clearly a $*$ algebra homomorphism since $e$ central projection, and for $x\in X=M^{*}e$, $\langle ae,x\rangle= \langle a,xe\rangle=\langle a,x\rangle$ so that the above map coincides with the original map $M\to X^*$ we defined, which is thus as expected an algebra homomorphism. Note also that if $a\in X^*\subset M^{**}$ we have by Goldstine lemma a net $a_n\to a$ weak-* in $M^{**}$ with $a_n\in M_{||a||}$, thus weak-* in $X^*$ since this topology is weaker. Thus $M$ is a weak-* dense subalgebra of $X^*$ as expected.

Let us finally show that the tracial matrix ordered structure of $X$, as well as the extra structure of the language, is the one induced as predual of $X^*$. This is the ``new" (but standard) part with respect to Groh's proof. The map $B$ is defined unambiguously by (30). If we write $B(\varphi,.)=\hat{\varphi}$ as before the map defined in (30) or \cite[p 16]{BrownOzawa} that converts $M_n(X)=NCB(X^*,M_n(\C))$ into maps in $(M_n(X^*))^*$.  Then the (in)equalities implied by (30) extend by strong-* density from $a\in M$ to $a\in X^*$ so that, 
by the characterization of \cite[Prop III.4.6]{TakesakiBook} $\widehat{i_n(|\varphi|)}=|\widehat{\varphi}|$ with the second absolute value computed in $(M_n(X^*))^*$. This determines $|\varphi|$ as the value we expected and moreover, by the bijection in \cite[Prop 1.5.14]{BrownOzawa}, $i_n(|\varphi|)$ is always  in $NCP(X^*,M_n(\C)).$ Thus since the image of $i_n(|.|)$ is exactly $i_n(\mathcal{C}_n),$ by the first relation in (30), one deduces that $i_n(\mathcal{C}_n),$ is exactly the set of completely positive maps and thus the expected positive cone of $X$ induced by the von Neumann algebra $X^*$. The fact that $tr$ is also the expected Haagerup trace is obvious. It remains  to  use (31) to identify the norms on $M_n(\mathcal{S}).$ The last formula implies the inequality for $y\in M_n(\mathcal{S})$,  $||y||_{cb}\leq d(y,0).$ Moreover, 
from the equation $||tr_n(\phi)||_{M_n(\C)}=||\phi||_{M_n(X)}$ in the theory of tracial matrix-normed operator spaces for $\phi\in \mathcal{C}_n$ we have $||\phi||_{cb}=d(y,0)$ in this case. Finally, we have from the first identity  for $y\in D_l(M_n(\mathcal(S)))$ (thus with $||y||_{cb}\leq l$)

\begin{align*}d(y,0)&= \inf_{x\in D_{5l}(\mathcal{C}_{2n})}\left[\max(||tr_n(\pi^{(n)}_{11}(x))||,||tr_n(\pi^{(n)}_{22}(x)||)+d(y,\pi^{(n)}_{12}(x))\right]\\&\leq \inf_{x\in \mathcal{C}_{2n}, ||x||_{cb}\leq 5l, \pi^{(n)}_{12}(x)=y}\left[\max(||tr_n(\pi^{(n)}_{11}(x))||,||tr_n(\pi^{(n)}_{22}(x)||)\right]=||y||_{dec}
\end{align*}
with the last equality coming from identification with $C_{2n}$ as the right cone of $CP$ maps (and of $d$ on positive elements) and thus by \cite[lemma 5.4.3]{EffrosRuan}, since $||y||_{dec}=||y||_{cb}$ this equals $d(y,0)$ and this concludes.
All  the extra structure is determined by the various equations.

Since the state space diameter is $2$ as soon we don't have a factor, the computation recalled in \cite[Theorem 6.6]{HaagerupAndo} characterizes $III_\lambda$ factors by infinite dimensionality and $d=2\frac{1-\sqrt{\lambda}}{1+\sqrt{\lambda}}.$ This concludes the second axiomatization.
\end{proof}
\subsection{The abstract axiomatization result for preduals of von Neumann algebras}

We now obtain a more abstract result.

\begin{theorem}\label{GrohThPredual}
The class of preduals of von Neumann algebras is axiomatizable in the language of tracial matrix-ordered operator spaces and so are the classes of preduals of $III_\lambda$-factors  for each fixed $0<\lambda\leq 1.$
\end{theorem}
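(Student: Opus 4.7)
The plan is to apply the Keisler--Shelah based axiomatizability criterion \cite[Prop 5.14]{BenYBHU}: it suffices to show that preduals of von Neumann algebras, viewed as tracial matrix-ordered operator spaces, form a class stable under ultraproducts and ultraroots. Closure under ultraproducts is obtained directly from Theorem \ref{GrohTh}: given preduals $X_i$ of vN algebras $M_i = X_i^*$, the pairs $(M_i, X_i)$ are models of $T_{C^*,(W^*)_*}$; their model-theoretic ultraproduct $(M^\omega, X^\omega)$ is again such a model, so $X^\omega$ is the predual of the vN algebra $(X^\omega)^*$, and forgetting the $C^*$-algebra factor yields the required stability in the smaller language.

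The heart of the proof is stability under ultraroots. Suppose $X$ is a tracial matrix-ordered operator space such that, for some ultrafilter $\omega$ (one may take $\omega$ countably incomplete, which suffices for Keisler--Shelah), $X^\omega$ is the predual of a vN algebra $N = (X^\omega)^*$. The diagonal embedding $\iota : X \hookrightarrow X^\omega$ is a completely isometric morphism of tracial matrix-ordered operator spaces, so its Banach adjoint $\pi := \iota^* : N \to X^*$ is a unital, weak-$*$ continuous, completely positive surjection. Construct a section $\sigma : X^* \to N$ by the entrywise ultralimit
\[
\sigma(\phi)\bigl([x_i]_{i\in I}\bigr) = \lim_{i\to\omega} \phi(x_i).
\]
Boundedness of $\phi$ makes $\sigma(\phi)$ well defined, and $\sigma$ is linear, completely contractive, unital (sending the Haagerup trace $tr$ of $X$ to the unit of $N$), and CP at every matrix level because the ultralimit preserves the cones $\mathcal{C}_n^*$. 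Since $\pi \circ \sigma = id_{X^*}$, the map $\sigma$ is in fact a complete isometry and $E := \sigma \circ \pi$ is a unital CP idempotent on $N$ with image $\sigma(X^*)$. The Choi--Effros theorem therefore endows $\sigma(X^*)$ (equivalently, $X^*$) with a $C^*$-algebra structure, whose product is $\phi \cdot \psi := \pi\bigl(\sigma(\phi)\sigma(\psi)\bigr)$.

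To upgrade this to a vN algebra structure with $X$ as predual, note that $X^*$ is now a $C^*$-algebra whose underlying Banach space is a dual space, so by Sakai's theorem it is a $W^*$-algebra with a unique predual. Since the weak-$*$ continuous functionals on $X^*$ for the $X$-duality form a norm-closed, point-separating subspace of $(X^*)^*$, this predual must coincide with $X$. It then remains to check the compatibility of the tracial matrix-ordered data: the matrix norms agree because $\sigma$ is a complete isometry and the $C^*$-matrix norms of $X^*$ coincide with those of $\sigma(X^*) \subset N$; the Haagerup trace agrees because $\sigma(tr) = 1_N$; and the positive cones $\mathcal{C}_n \subset M_n(X)$ coincide with the predual cones induced by the new vN algebra structure via the complete order isomorphism $\sigma$, using that the Choi--Effros positive cone on $\sigma(X^*)$ is just its intersection with the positive part of $N$ at each matrix level.

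For the $III_\lambda$ subclass, one adds axioms stating infinite-dimensionality together with the state-space diameter being $d = 2(1-\sqrt{\lambda})/(1+\sqrt{\lambda})$, both expressible purely in the duality between $X$ and $X^*$. Stability under ultraproducts follows from the factor case of Theorem \ref{GrohTh}, and stability under ultraroots from the facts that the state-space diameter is preserved by the complete isometry $\iota$ (state spaces being compact) and that ultrapowers of finite-dimensional spaces are trivial. The principal difficulty throughout is the ultraroot step: one must recognize that the seemingly modest language of tracial matrix-ordered operator spaces already carries enough information for the ultralimit section $\sigma$ to be a complete order embedding, unlocking Choi--Effros.
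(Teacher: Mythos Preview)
Your argument for the general von Neumann predual case is correct and is essentially the paper's proof: both build the completely positive section $\sigma$ (the paper calls it $i$) and the dual projection $\pi$ (the paper's $E$), obtain the unital CP idempotent $\sigma\circ\pi$ on $(X^\omega)^*$, invoke Choi--Effros to put a $C^*$-structure on $X^*$, and then use that a $C^*$-algebra which is a dual Banach space is a $W^*$-algebra. Your verification that the matrix order and trace on $X$ are the ones induced by this $W^*$-structure is slightly more compressed than the paper's, but relies on the same fact (the Choi--Effros positive cone on the range of a unital CP projection coincides with the ambient one), so there is no real difference.

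The gap is in the $III_\lambda$ ultraroot step. You assert that ``the state-space diameter is preserved by the complete isometry $\iota$'', but this is not clear: the Connes--St\o rmer diameter is $\sup_{\phi,\psi}\inf_{u}\|\phi-u\psi u^*\|$ with the infimum over unitaries of the von Neumann algebra, and $\iota:X\to X^\omega$ gives you no control over the unitary groups of $X^*$ versus $(X^\omega)^*$. Compactness of state spaces does not help with this infimum. The paper avoids this difficulty by bootstrapping through the pair theory: once you know $(X^*,X)$ is a model of $T_{C^*,(W^*)_*}$, its ultrapower in the \emph{pair} language is $((X^*)^\omega,X^\omega)$, and since $X^\omega$ is the predual of a $III_\lambda$ factor this ultrapower satisfies axiom (32) (hence $T_{C^*,(III_\lambda)_*}$); by \L o\'s, so does $(X^*,X)$, and therefore $X^*$ is a $III_\lambda$ factor. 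In other words, the diameter condition is transferred not by a direct geometric argument on $\iota$ but by elementary equivalence in the richer language where the diameter is first-order expressible.
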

\begin{proof}
We of course use the model theoretic result \cite[Prop 5.14]{BenYBHU}. Thus, it suffices to check those classes are stable by ultraproducts and ultraroots.

First, by \cite[Th 3.24]{HaagerupAndo} Banach space ultraproducts of preduals is a predual (the result is due to Groh for ultrapowers). We can actually deduce this result and the general ultraproduct case even in the sense of tracial matrix-ordered operator spaces from the previous theorem \ref{GrohTh}. Indeed, for any $X_n$ preduals, we associate a couple $(X_n^*,X_n)$ which gives a model $\mathcal{M}(X_n^*,X_n)$ of $T_{C^*,(W^*)_*}$. And so is the model-theoretic ultraproduct $\mathcal{M}(X_n^*,X_n)^\omega$ which has as second space the ultraproduct $(X_n)^\omega$ as tracial matrix-ordered operator space. This implies this is indeed a predual of a von Neumann algebra with the corresponding structure of tracial matrix-ordered operator space. The $III_\lambda$-factor case is similar.


It remains to check the class is stable by ultraroot. For let $(X,tr)$ be a tracial matrix-ordered operator space such that $((X^\omega)^*,1= tr^\omega)$ is a von Neumann algebra. Recall that by definition $M_n(X^\omega)=(M_n(X))^\omega$ (first as Banach space and then as operator space) and ultraproduct of positive cone is the cone of the ultraproduct. Obviously we have a completely isometric completely positive injection  $i:X^*\hookrightarrow (X^\omega)^*$ given by \[[i(\varphi)]((x_n)_\omega)=\lim_{n\to \omega}\varphi(x_n).\] Note that by definition $i(tr)=1.$ Moreover, we have a completely positive map $E:(X^\omega)^*\to X^*$ dual to the canonical injection and $E\circ i=id.$ Thus  $P=i\circ E:(X^\omega)^*\to (X^\omega)^*$ is a completely positive projection with image $i(X^*)\simeq X^*.$ Note also $E(1)=E(i(tr))=tr$ so that $P(1)=1.$ By the result of \cite{ChoiEffros} (see also \cite{EffrosStormer}), the image of $P$, i.e. $P((X^\omega)^*)=i(X^*)\simeq X^*$ becomes a $C^*$-algebra for the product $P(x).P(y)=P(xy)=P(P(x)P(y)).$ Thus $X^*$ is a $C^*$-algebra which is a dual of a Banach space, this is thus a von Neumann algebra. Note that $\varphi$ is positive in the $C^*$ structure if and only if there exists $h$ such that $\varphi=P(hh^*).$ This implies  for $x_n=x\in X$ positive  \[\varphi(x)=P(hh^*)(x)=\lim_{n\to \omega}[E(hh^*)](x)=[E(hh^*)](x)=(hh^*)(x_n)\geq 0\] since $(x_n)$ is by definition positive in the ultraproduct thus positive on positive elements of the $C^*$ algebra dual to this ultraproduct. Thus $\varphi$ is a positive element of $X^*$ for the duality (using also matrix variants to obtain complete positivity). Conversely, if $\varphi$ is such a positive element, then, for $(x_n)\in M_k(X^\omega)$ consisting of positive elements $i(\varphi)(x_n)=\lim_{n\to \omega}\varphi(x_n)$ is positive, thus $i(\varphi)$ is positive in $(X^\omega)^*$ thus of the form $hh^*$ and $i(\varphi)=P(i(\varphi))=P(hh^*)$ is positive in our $C^*$ algebra structure on $i(X^*)\simeq X^*$. Thus $i$ from $X^*$ with dual matrix-ordered norm structure and with unit $1=tr$ to $i(X^*)$ is a unital complete order isomorphism (complete since the reasoning above applies also on $M_n(X^*)$). Thus one deduces that the order structure and trace on $X$ are those as predual of the von Neumann algebra $X^*,$ as expected. Thus $(X^*,X)$ satisfies $T_{C^*,(W^*)_*}$. If its ultraproduct satisfies $T_{C^*,(III_\lambda)_*}$ as this is the case if $X^\omega$ is the predual of a $III_\lambda$ factor, so does $(X^*,X)$ and thus we also get stability by ultraroot of preduals of $III_\lambda$ factors for $\lambda\in ]0,1].$
\end{proof}

\subsection{The Groh-Haagerup-Raynaud theory for standard forms}
We write down in this short subsection the model theory for standard forms implicit in \cite{HaagerupAndo} and strongly based on the original works of \cite{Haagerup,Raynaud}. We thus call Groh-Haagerup-Raynaud theory the resulting theory. We add to the previous language for Groh theory the language of complex Hilbert spaces, namely the (complex variant) of \cite[section 15]{BenYBHU} and the following supplementary data :
\begin{itemize}
\item[$\bullet$]Unary function symbols $\Pi_P: D_{m}(\mathcal{H})\to D_m(\mathcal{H}), J: D_{m}(\mathcal{H})\to D_m(\mathcal{H})$ for the projection onto the positive cone $P$ of the standard form and the modular operator.
\item[$\bullet$]Unary function symbols $\xi: D_{m^2}(\mathcal{C}_1)\to D_m(\mathcal{H}),$ 
 $\omega: D_m(\mathcal{H})\to D_{m^2}(\mathcal{C}_1),$ for the two inverse bijections between normal states and the positive cone of the standard form. (we may also write $i_1\omega$ as $\omega$)
\item[$\bullet$]Binary function symbols $\omega_2: D_m(\mathcal{H})^2\to D_{m^2}(\mathcal{S})$ the bivariate variant of the previous one. 
\item[$\bullet$] Binary functions symbols : \[\pi:D_l(\mathcal{V})\times D_m(\mathcal{H})\to D_{ml}(\mathcal{H})\] interpreted as an action.
\end{itemize}
The expected uniform continuity modulus are obvious, $\Pi_P,J,\omega_2,\pi$ are contractions or are well-known as Powers-Störmer inequality ( see e.g. \cite[lemma 2.10]{Haagerup} or \cite[Th IX.1.2 (iv)]{TakesakiBook}) for $\xi,\omega$.
\[||\xi(\phi)-\xi(\psi)||\leq \sqrt{||\phi-\psi||}\]
\[||\omega(h)-\omega(k)||\leq||h+k||||h-k||\leq 2m||h-k||\]
if $h,k\in D_m(H).$ We need one more axiom :
\begin{enumerate}
\item[(33)] 
 $\pi$ bilinear, $\omega_2$ sesquilinear , $J=J^2$ antilinear. \[\pi(a,\pi(b,h))=\pi(ab,h),\ \pi(1,h)=h,\  \langle \pi(a,h),k\rangle = \langle h,\pi(a^*,k)\rangle,\] $\xi(\omega(\Pi_P(h)))=\Pi_P(h), \xi(\psi)=\Pi_P(\xi(\psi)),\omega(\xi(\psi))=\psi$, $\omega_2(h,h)=i_1(\omega(h)),$ \[ \Pi_P(\Pi_P(h))=h,\ \forall \lambda>0, \Pi_P(\lambda h)=\lambda \Pi_P(h),\ \ \Pi_P(\Pi_P(h)+\Pi_P(k))=\Pi_P(h)+\Pi_P(k),  J(\Pi_P(h))=\Pi_P(h)\] \[tr(m_1^l(a,\omega_2(\xi,\eta)))
 =\langle \pi(a,\xi), \eta\rangle\]
\[\Pi_P(\pi(a,J\pi(a,J(\Pi_P(h)))))=\pi(a,J\pi(a,J(\Pi_P(h)))),\]
 \[\sup_{(x,y)\in (D_m(\mathcal{H}))^2}\max(0,-\Re\langle\Pi_P(x),\Pi_P(y)\rangle)=0\]
\[\sup_{x\in D_m(\mathcal{H})}d(\Pi_P(x+J(x))-x-J(x),\Pi_P(\Pi_P(x+J(x))-x-J(x)))=0\]
 \begin{align*}&\sup_{x\in D_m(\mathcal{H})}\sup_{(a,b)\in D_{m}(V)} d(J\pi(b,J\pi(a,x)),\pi(a,J\pi(b,Jx))))=0\end{align*}
\begin{align*}\sup_{(x_1,...,x_n,y_1,...,y_n)\in D_m(\mathcal{H})^{2n}}&\max(0,\inf_{a\in D_{1}(V)} \sum_{i=1}^n||J\pi(a,Jx_i)-y_i||^2\\&-\sup_{b\in D_1(M_n(V))} \left(\sum_{i,j,k=1}^n\langle \pi(b_{ki},y_i),\pi(b_{kj},y_j)\rangle-\langle \pi(b_{ki},x_i),\pi(b_{kj},x_j)\rangle\right))=0\end{align*}
\end{enumerate}

\begin{theorem}\label{RaynaudTh}
The class of quintuples $(M,X,H,J,\Pi_P)$ of a weak-* dense $C^*$ algebra $M$ of $X^*$ and a predual of a von Neumann algebra $X$, and a standard form $(X^*,H,J,P)$ for $X^*$ acting on $H$, with modular conjugation $J$ and positive cone $P$ (given by its projection $\Pi_P$)  is  axiomatizable by the theory $T_{C^*,(W^*)_*,SF}$ consisting of $T_{C^*,(W^*)_*}$, the axioms of complex Hilbert spaces for $H$ and (33).
\end{theorem}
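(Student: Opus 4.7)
The plan is to prove both inclusions: every quintuple $(M,X,H,J,\Pi_P)$ of the described form yields a model of $T_{C^*,(W^*)_*,SF}$, and every such model arises this way.

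For the forward direction, the axioms of $T_{C^*,(W^*)_*}$ are handled by Theorem \ref{GrohTh}. For the new axioms (33), the module identities for $\pi$ are immediate and the uniform continuity bounds for $\xi,\omega$ are the Powers-Stormer estimates recalled just before (33). The algebraic identities involving $\Pi_P, J, \pi$ encode exactly Haagerup's defining axioms for standard forms (cf.\ \cite[Ch.~IX]{TakesakiBook}): $J$ is an antiunitary involution pointwise fixing $P$, $\Pi_P$ is the (non-linear) projection onto the self-dual cone $P$, $JX^*J\subseteq (X^*)'$, and $aJaJ(P)\subseteq P$ for all $a\in X^*$. The last axiom of (33), a Powers-Stormer type density inequality, expresses that the set $\{JaJ\xi : a\in M, \|a\|\leq 1\}$ approximates $P$-vectors well enough; it follows from a Hahn-Banach separation argument together with the weak-* density of $M$ in $X^*$ and the cyclicity of cone vectors.

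For the reverse direction, given a model, Theorem \ref{GrohTh} applied to the reduct in the language of $T_{C^*,(W^*)_*}$ yields a predual pair $(M,X)$ with $X^*$ a von Neumann algebra and $M$ a weak-* dense $C^*$-subalgebra. The action $\pi$ defines a unital $*$-representation of $M$ on $H$ by the module axioms, and extends uniquely to a normal $*$-representation $\widetilde\pi : X^* \to B(H)$: for each $\xi,\eta\in H$, the axiom $\langle \pi(a,\xi),\eta\rangle = tr(m_1^l(a,\omega_2(\xi,\eta)))$ identifies the matrix coefficient with evaluation against $\omega_2(\xi,\eta)\in X$, hence is weak-* continuous in $a\in M$, and Kaplansky's density theorem yields the normal extension to $X^*$. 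The axioms of (33) then directly furnish the standard-form structure: the conditions on $J$ make it an antiunitary involution with $J|_P = \mathrm{id}$; the cone $P := \Pi_P(H)$ together with $\Re\langle \Pi_P(x),\Pi_P(y)\rangle \geq 0$ gives self-duality; the axiom $\Pi_P(x+Jx) - (x+Jx) \in P$ implies $P - P$ is dense in $\{h: Jh=h\}$; the commutation axiom gives $J\widetilde\pi(X^*)J \subseteq \widetilde\pi(X^*)'$; and $\pi(a,J\pi(a,J\Pi_P(h)))\in P$ gives the invariance $\widetilde\pi(a)J\widetilde\pi(a)J(P)\subseteq P$. The bijection $\xi\leftrightarrow\omega$ with polarization $\omega_2$, together with the above pairing identity, identifies $\xi,\omega$ with the canonical Powers-Stormer bijection between $P$ and normal states of $\widetilde\pi(X^*)$.

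The main obstacle is verifying that $\widetilde\pi$ is faithful (and that the extension is truly normal): this is precisely where the final Powers-Stormer type axiom of (33) enters, forcing enough non-degeneracy of the $J\pi(M)J$-orbits in $H$ to rule out kernel elements of $\widetilde\pi$. Once $\widetilde\pi$ is established as a faithful normal representation, Haagerup's uniqueness theorem for standard forms \cite{Haagerup} identifies $(H,J,P)$ with the canonical standard form of $X^*$, which completes the reverse direction together with the morphism bijection expected from an axiomatization (structure-preserving morphisms correspond to state-preserving $*$-homomorphisms implementable spatially by partial isometries between the standard Hilbert spaces).
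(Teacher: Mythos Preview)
Your overall architecture is right, but you have misidentified the role of the final axiom in (33), and this hides the actual gap in your argument.

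Faithfulness of the extended action $\widetilde\pi$ does \emph{not} require the last axiom: once you have the duality formula $\langle\widetilde\pi(f)\xi,\eta\rangle = f(\omega_2(\xi,\eta))$, injectivity follows immediately from the bijection $\xi\leftrightarrow\omega$ between $\mathrm{Im}(\Pi_P)$ and the positive cone of $X$ (if $\widetilde\pi(f)=0$ then $f$ vanishes on every $\omega(\Pi_P(h))$, hence on all of $X_+$). What the last axiom in (33) is actually used for is the \emph{reverse} commutant inclusion $(X^*)'\subset JX^*J$. You only record $JX^*J\subset (X^*)'$ from the commutation axiom, and then appeal to Haagerup's uniqueness theorem; but that theorem compares two standard forms, so it presupposes precisely the equality $JX^*J=(X^*)'$ you have not established. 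The paper closes this gap by the Haagerup--Winsl\o w argument: given $a\in (X^*)'$ with $\|a\|\le 1$ and $x_1,\dots,x_n\in H$, set $y_i=ax_i$; then $\omega_y\le\omega_x$ as states on $M_n(X^*)$, so the sup on the right of the last axiom vanishes, and the axiom yields $d\in M$, $\|d\|\le 1$, with $\sum_i\|JdJx_i-y_i\|^2$ arbitrarily small. This shows $a$ lies in the strong closure of $JMJ$, hence $(X^*)'\subset JX^*J$. (This same lemma, run in reverse with Kaplansky density, is also what verifies the last axiom in the forward direction---not a Hahn--Banach separation.)

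A secondary point: the positivity $\Re\langle\Pi_P(x),\Pi_P(y)\rangle\ge 0$ gives only $P\subset P^\flat$. For self-duality you must use the decomposition axiom $\Pi_P(x+Jx)-(x+Jx)\in P$ to write any $h$ with $\langle h,P\rangle\ge 0$ as $\Pi_P(a)-\Pi_P(b)+i(\Pi_P(c)-\Pi_P(d))$ with orthogonal parts, and then force $b=c=d=0$ by evaluating against suitable cone vectors. Without this step your cone need not be self-dual, and the standard-form verification via \cite[lemma 3.19]{HaagerupAndo} fails.
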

As in Groh theory, the morphisms for the category of axiomatization are the $C^*$ algebra homomorphism, inducing  normal homomorphisms of $X^*$, with image a von Neumann subalgebra having a conditional expectation onto it. We will see that this is enough to obtain a unique structure preserving morphism of the theory and we will especially obtain for free a standard form homomorphism (namely a supplementary Hilbert space isometry commuting with $J,\Pi_P$ and the action).
\begin{proof}
We first check that a standard form satisfies axiom (33). Most equations are obvious consequences of the definition. The bijectivity of $\xi,\omega$ comes e.g. from \cite[Th IX.1.2]{TakesakiBook}. The ante-penultimate equation stating a real vector can be decomposed as a sum of orthogonal positive and negative parts comes from \cite[lemma IX.1.7]{TakesakiBook}. The proof of the last equation comes from \cite[lemma 3.2, 3.1]{HaagerupWinslow} as in \cite[lemma 3.21]{HaagerupAndo}. Indeed take $x=(x_1,...,x_n)\in H^n,y=(y_1,...,y_n)\in H^n $ and let $\epsilon =\sup_{b\in D_1(M_n(V))} \left(\sum_{i,j,k=1}^n\langle \pi(b_{ki},y_i),\pi(b_{kj},y_j)\rangle-\langle \pi(b_{ki},x_i),\pi(b_{kj},x_j)\rangle\right))$ then for $a$ positive in $M_n(X^*)$ \[\omega_y(a)\leq \omega_x(a)+\epsilon ||a||\]
The quoted lemma 3.2 then implies there is $Y\in H^n$ with $||Y-y||\leq \sqrt{\epsilon}$ and $\omega_Y\leq \omega_x$ as state on $M_n(X^*)$. Thus by their quoted lemma 3.1, there is $c\in (M_n(X^*))'=(X^*)'I_n=JX^*JI_n$ with $c=JdJI_n$ with $||d||\leq 1$ such that $Y_i=JdJx_i$ and thus \[\sum_{i=1}^n||JdJx_i-y_i||^2\leq \epsilon.\]
By Kaplansky's density theorem giving strong-* density of the unit ball  $M$ in the unit ball of $X^*$, this concludes to the last equation in (33).

We already saw that a model morphism gives rise to the expected kind of morphism since preserving the structure of Groh theory is enough for that. Conversely, consider a $C^*$-algebra morphism extending to a normal $*$ homomorphism $i:X_1^*\to X_2^*$ with image a von Neumann subalgebra with conditional expectation $iE$ obtained from $E:X_2^*\to X_1^*$. To build the standard form morphism giving the structure preserving map, we use the uniqueness theorem for standard form \cite[Th IX.1.14]{TakesakiBook} and can assume $H_1=L^2(i(X_1^*),\phi), H_2=L^2(X_2^*,\phi\circ E)$ for some faithful semi-finite normal weight so that we have an isometric inclusion $u:H_1\to H_2$. From the criteria for existence of conditional expectations \cite[Th IX.4.2]{TakesakiBook}, the modular theory of $\phi$ in $H_1$ is computed by restriction of the one of $H_2$, so that $J_2u=uJ_1$ and $u(P_1)\subset P_2$ but from this and self-duality of the cones it is easy to see that $u(P_1)=P_2\cap H_1$ and thus $u$ commutes with the projections on the cones. Finally, since $\pi(i(x),u(y))=u(\pi(x,y))$ and $\omega_2(u(h),u(k))=\omega_2(h,k)\circ E$ by definition, one deduces $u$ preserves all the other data in the structure of $T_{C^*,(W^*)_*,SF}$ which is derived : $\omega_2(h,k)=\omega_2(u(h),u(k))\circ i$,  $u\xi_\omega=\xi_{\omega\circ E}.$ Moreover a standard form morphism $u$ as above part of a structure preserving morphism satisfy $u\xi_\omega=\xi_{\omega\circ E}.$ Thus $u$ is determined on the positive cone thus by linearity on the Hilbert space concluding to the bijection between structure preserving morphisms and the morphisms of the considered category.

Assume given a model $(M,X,H,J,\Pi_P)$ of $T_{C^*,(W^*)_*,SF}$. We already know $M\subset X^*$ is weak-* dense $C^*$-algebra of a von Neumann algebra and $H$ is an Hilbert space. First, for $f\in X^*$, $f(\omega_{2}(\xi,\eta))$ defines a sesquilinear map on $H$ and thus from Riesz representation 
\[f(\omega_{2}(\xi,\eta))=\langle A(f)\xi,\eta\rangle.\]
Moreover, if $f\in M\subset X^*,$ one obtains $A(f)\xi=\pi(f,\xi)$ and the linear $A$ is weak-* to weak operator topology continuous, thus weak-* continuous on bounded sets so that one extends the action property by weak-* density of $M$ in $X^*$, so that $A$ is a $*$-homomorphism. It is one-to-one since if $A(f)=0$, from the bijection $\omega,\xi$ between $Im(\Pi_P)$ and the positive cone of $X$, $f$ vanishes on this cone and $f=0$. Thus $A$ is a $*$-isomorphsim onto its image and is thus weak-* continuous \cite[Corol III.3.10]{TakesakiBook}. $\pi$ has thus been extended to a normal action of $X^*$.

From the equations in (33), the image of $\Pi_P$ is a convex cone, and since $\Pi_P$ contractive, it is closed.  To check it is self-dual, first note that we know $\langle\Pi_P(x),\Pi_P(y)\rangle$ is positive and conversely, consider if  $h\in H$ such that $\langle h, \Pi_P(x)\rangle \geq 0$ for all $x\in H$ and consider from the ante-penultimate formula in (33) $a=h+J(h),b=\Pi_P(a)-a$ with $\langle \Pi_P(a),\Pi_P(b)\rangle =\langle \Pi_P(a),\Pi_P(a)-a\rangle=0$ (since it is $\geq 0$ from the recalled positivity and since $0\in Im(\Pi_P)$ $\Re(\langle a-\Pi_P(a),0-\Pi_P(a)\rangle)= \langle a-\Pi_P(a),-\Pi_P(a)\rangle \leq 0$  from characterization of the projection) and $ h+J(h)=\Pi_P(a)-\Pi_P(b)$. Similarly one can get $c,d$ with $\langle \Pi_P(c),\Pi_P(d)\rangle=0, h-J(h)=i(\Pi_P(c)-\Pi_P(d))$ so that $2h =\Pi_P(a)-\Pi_P(b)+i(\Pi_P(c)-\Pi_P(d))$. But by assumption we have :$$\langle 2h, \Pi_P(x)\rangle=\langle \Pi_P(a), \Pi_P(x)\rangle-\langle \Pi_P(b), \Pi_P(x)\rangle-i\langle \Pi_P(c), \Pi_P(x)\rangle+i\langle \Pi_P(d), \Pi_P(x)\rangle\geq 0$$

Thus in taking the imaginary part $\langle \Pi_P(c), \Pi_P(x)\rangle-\langle \Pi_P(d), \Pi_P(x)\rangle=0, $ and from $x=c$ one gets $\Pi_P(c)=0$ and from $x=d$ one gets $\Pi_P(d)=0.$ Similarly from $x=b$ on gets $-\langle \Pi_P(b), \Pi_P(b)\rangle\geq 0$ and thus $\Pi_P(b)=0$ implying $2h=\Pi_P(a)$ so that the cone is indeed self dual.

By now we consider $X^*$ as a von Neumann algebra on $H$ via $\pi.$
From \cite[lemma 3.19]{HaagerupAndo}, in order to check we have a standard form $(X^*,H,J,Im(\Pi_P))$, it suffices to check for $x\in Im(\Pi_P)$ $J(x)=x$, which is contained in (33), $aJaJ(Im(\Pi_P))\subset Im(\Pi_P)$ which is also contained in the case $a\in M$ and extends to $a\in X^*$ by strong density of the unit ball of $M$ in the form of Kaplansky's density theorem and finally the key $JX^*J=(X^*)'.$ 

For that last statement, one uses the next-to-last equation in (33)  to see that $JMJ\subset M'=(X^*)'$  and thus by strong-* density  $J(X^*)J\subset (X^*)'$. Conversely, one uses the idea in the proof of \cite[Th 3.22]{HaagerupAndo}, let $a\in (X^*)'$ with $||a||\leq 1$ and show that $a\in (JMJ)''$. For take $x_1,...,x_n\in H$. It is well-known that $M_n(X^*)$ acts on $H^n$ with commutant $(X^*)' Id_n$.  Let $y_i=ax_i\in H.$ According to \cite[lemma 3.1]{HaagerupWinslow} if $x=(x_1,...,x_n)\in H^n, y=(y_1,...,y_n)$ and $\omega_z(.)=\langle .z,z\rangle$ the canonical state on $N'$ with $z\in\{x,y\}$ we have $\omega_y\leq \omega_x$ and this is relevant to apply the last equation in (33) since then \[\sup_{b\in D_1(M_n(V))} \left(\sum_{i,j}^n\langle (b^*b)_{ji}y_i,y_j\rangle-\langle (b^*b)_{ji}x_i,x_j\rangle\right)=0,\]

and thus by this equation \[\inf_{a\in D_{1}(M)} \sum_{i=1}^n||JaJx_i-y_i||^2=0.\]
Thus since $x_1,...,x_n$ are arbitrary, $(X^*)'$ is in the strong operator topology  closure of $JMJ$ i.e. on gets the claimed  $ (X^*)'\subset J(X^*)J.$  Finally, all the data of the model is determined as the expected data since $J,\pi,\Pi_P$ are part of a standard form data, $\omega_2$ is then determined by $\pi$ and determines $\omega$ which determines $\xi$ as its inverse.
\end{proof}

\section{The Ando-Haagerup theory gathering Groh and Ocneanu theories}
We now describe a theory for a $C^*$ algebra $C$ weak-* dense in $X^*$, von Neumann algebra with predual $X$, and a $\sigma$-finite von Neumann algebra $M\simeq eX^*e$ for $e$ the support projection of $\phi\in X,$ thus inducing a faithful  state $\varphi$ on $M$. $(C,X)$ is described by the Groh theory of the previous section with corresponding language and even we will rather use for convenience the language of the Groh-Haagerup-Raynaud theory, $(M,\varphi)$ is described by the Ocneanu theory of section \ref{Ocneanu}. This is the kind of setting Haagerup and Ando used in \cite{HaagerupAndo} to relate the Ocneanu and Groh-Raynaud ultraproducts. Even if we also use Raynaud's viewpoint using standard forms instead of Groh's viewpoint, our axiomatization will look quite different from their proofs, contrary to our axiomatization of the Groh theory that was similar and a generalization of Groh's argument in a better setting.  To correct axiom (38) from a previous preprint that contained a meaningless copy-pasted formula from section \ref{Ocneanu}, we think it was more convenient to use the standard form framework to get a canonical way of computing $\Delta^{1/2}$ in $(eX^*e,\phi)$ to state in a readable enough way it agrees with the expected one for $(M,\varphi)$.

To have a model associated to $(C,X,H,J,P,\phi,M)$ in the setting above, we now introduce the following supplementary data in the language.
\begin{itemize}
\item[$\bullet$] A unary function symbol $P:D_n(V)\to D_n(U)$ for $e.e:C\to eX^*e\simeq M$
\item[$\bullet$] A constant $\phi\in \mathcal{C}_1$ for a state.  
 \item[$\bullet$] Binary Relation Symbols 
  $\mathscr{E}_{P,\beta,r},\mathscr{E}_{P,\beta} :(D_m(V))^2\to \C$ for $\beta\in]0,1[\cap \Q,r\in \Q$ 
  meaning \[\E_{P,\beta}(x,y)=\E_{\beta}(P(x),P(y)),
  \E_{P,\beta,r}(x,y)=\E_{\beta}(G_r(P(x)),P(y)).\]
  \item[$\bullet$] Binary  Relation Symbols $\mathscr{E}_{\beta,N,\infty}:(D_m(U))^2\to \C$ for $\beta\in\Q\cap[0,1[,N\in \N$ 
  meaning $\E_{\beta}(F_N^\varphi(.),.)$
  \item[$\bullet$] A Ternary  Relation Symbol $\mathscr{E}_{0,N,\infty,M}:(D_m(U))^3\to \C$ for 
  meaning $\varphi(F_N^\varphi(.)^*.F_M^\varphi(.)^*)$, $\mathscr{E}_{0,\infty,M}(.,.)=\mathscr{E}_{0,1,\infty,M}(1,.,.)$.
\end{itemize}

Of course, the last relation symbols on $D_m(U)$ could have been introduced in the Ocneanu theory from the very beginning but it will be crucial only in this section.  Note that the uniform continuity in the first variable is obvious, and the one in the second variable is obtained as the one for $\varphi$ in noting that for any $x=y+z, t$ and using the same bounds from \cite[lemma 4.13]{HaagerupAndo} as in lemma \ref{ContinuityProduct} :
\[|\mathscr{E}_{\beta}(F_N^\varphi(t),x)|\leq ||\sigma_{-i\beta}^\varphi(F_N^\varphi(t))||_\varphi||y||_\varphi+||(\sigma_{-i\beta}^\varphi(F_N^\varphi(t))z)^*||_\varphi
\leq 3\sqrt{2}e^{(2\beta+1) N}||t||\sqrt{||y||_\varphi^2+||z^*||_\varphi^2},\]
\[|\mathscr{E}_{\beta}(F_N^\varphi(x),t)|\leq e^{\beta N}||(F_N^\varphi(x))||_\varphi||t||_\varphi
\leq e^{(\beta+1/2) N}(1+e^N)^{1/2}||x||_\varphi^*||t||_\varphi,\]

since from spectral theory and the proof of lemma \ref{NormG}, we used the fact that we have :\[||(F_N^\varphi(x))||_\varphi=||\Delta^{-1/2}(1+\Delta)^{1/2}\Delta^{1/2}(1+\Delta)^{-1/2}(F_N^\varphi(x))\xi_\varphi||\leq 
e^{N/2}(1+e^N)^{1/2}||(F_N^\varphi(x))||_\varphi^*\]
We are now ready to introduce our supplementary axioms :

\begin{enumerate}
\item[(34)]For any $N,M\in\N^*,\beta\in\Q\cap[0,1],$ $\mathscr{E}_{\beta,N,\infty}(y,F_M(x))=\mathscr{E}_{\beta,N,M}(y,x),$
\item[(35)]$P(1)=1,P(x^*)=(P(x))^*,$ $tr(m^l_1(a,\phi))=\varphi(P(a)),$
\[\sup_{a\in D_n(V)}\max(0,d(P(a),0)^2- tr(m^l_1(aa^*,\phi)))=0.\]
\item[(36)]For any $l,m,N\in\N^*,$
\[\sup_{(x,z)\in (D_l(V))^2}\sup_{y\in D_m(U)}\max(0,\mathscr{E}_{0,N,\infty}(y,P(x))+\overline{\mathscr{E}_{0,N,\infty}(y^*,P(z^*))}-m d(m^l_1(x,\phi)+m^r_1(\phi,z),0)=0,\]
\item[(37)]For any $m,N\in\N^*,$
\[\sup_{y\in D_m(U)}\inf_{z\in D_m(V)}\sup_{x\in D_1(V)}\max(|\mathscr{E}_{0,N,\infty}(y,P(x))-tr(m_1^l(zx,\phi)|,|\mathscr{E}_{0,N,\infty}(y^*,P(x))-tr(m_1^l(z^*x,\phi)|)=0,\]
\begin{align*}&\sup_{(y,Y)\in (D_m(U))^2}\inf_{(z,Z)\in (D_m(V))^2}\sup_{x\in D_1(V)}\\&\max\left(|\mathscr{E}_{0,N,\infty}(y^*,P(x))-tr(m_1^l(z^*x,\phi)|,|\mathscr{E}_{0,M,\infty}(Y,P(x))-tr(m_1^l(Zx,\phi)|,\right.
\\&\ \ \ \ \ \ \ \ \ \ \ |(NM+2)\mathscr{E}_{0,NM+2,\infty}(m_{N,M}(y,Y),P(x))\\&\ \ \ \ \ \ \ \ \ \ \left.-(NM+1)\mathscr{E}_{0,NM+1,\infty}(m_{N,M}(y,Y),P(x))-tr(m_1^l(Zzx,\phi)|
\right)=0,\end{align*}
\item[(38)]For any $l,m,N\in\N^*,\beta\in \Q\cap[0,1[,r\in\Q$
\begin{align*}\sup_{x\in (D_l(V))}\sup_{y\in (D_m(V))} \max(&0,|\E_{P,\beta,r}(x,y)-\mathscr{E}_{\beta,N,\infty}(G_r(P(x)),P(y))|\\&-4me^{-r/2}(e^r+|e^r-1|)d(F_N(P(x)),P(x)) )=0\end{align*}
For $\alpha,\beta\in \Q\cap]0,1[,0<\alpha<1/2,\alpha+\beta<1,\epsilon= \min(1/2-\alpha,1-\beta-\alpha),\delta= \min(\epsilon,\alpha) ,m,K,L,n\in\N^*$  \begin{align*}\sup_{(x,y)\in D_{m}^2}\max(0&,\left|\E_{P,\alpha+\beta}(x,y)-\frac{1}{n^2}\frac{\cos(\alpha \pi)}{2\pi}\sum_{k=-n^3}^{n^3-1}e^{\alpha k/n^2}\E_{P,\beta,k/n^2}(x,y)\right|\\&- \frac{4e^{-n\delta}m^2}{\pi\delta}-|(1-e^{1/n^2})|\frac{2(3+e^{\epsilon/n^2})m^2}{\pi\delta})=0 \end{align*}
For any $n,m,N,M,N_i,N_{i,j},L,l,K\in\N^*,\lambda_i,\lambda_{i,j}\in \Q\cap[0,1],u\in \Q, u>0$ with $\sum_{i=1}^n\lambda_i=1$
\[\sup_{x\in (D_m(U))}\sup_{(z_1,...,z_n)\in (D_L(U))^n}\inf_{y\in (D_m(V))}\max_{j=1,...,n} |\sum_{i=1}^K\lambda_{i,j}[\E_{0,N_{i,j},M}(z_j,x)-\E_{0,N_{i,j},\infty}(z_{j},P(y))]|=0,\]
\begin{align*}\sup_{x\in (D_m(V))}\max&\left(0,
\inf_{z\in (D_l(V))}\Re(tr(\|\pi(z,\xi_\phi)\|^2+\frac{1}{u}(\langle\pi((z-x)^*,\xi_\phi) ,J(\pi(z-x,\xi_\phi))\rangle-\frac{1}{u}\E_{P,1/2}(x,x)\right.\\&-\inf_{Z\in (D_l(U))}\left[\sum_{i=1}^n\lambda_i\sum_{j=1}^n\lambda_j(\E_{0,N_i,N_j}(Z,Z)+\frac{1}{u}\E_{1/2,N_i,N_j}(Z,Z))\right.\\&\left.\left.+\frac{1}{u}\sum_{i=1}^n\lambda_i(\E_{1/2,\infty,N_i}(P(x),Z)+\E_{1/2,N_i,\infty}(Z,P(x)))\right]\right)=0\end{align*}

\end{enumerate}

The two most technical conditions (37)-(38) will be explained in the proof bellow. (37) will enable us to check our expected $j:M\to eX^*e$ is a $*$-homomorphism. (38) first gives the weak-* density of $P(C)$ in $M$ and will be related to the surjectivity of $j$.

\begin{theorem}\label{AH}
The class of septuples $(C,X,H,J,\Pi_P,\phi,M)$ of a weak-* dense $C^*$ algebra $C$ of $X^*$ and a predual of a von Neumann algebra $X\ni \phi$ a state and a von Neumann algebra $M\simeq eX^*e$ for $e$ the support projection of $\phi$ with $(X^*,H,J,Im(\Pi_P))$ a standard form as above is axiomatizable by the theory $T_{AHW^*}$ consisting $T_{C^*,(W^*)_*,SF}$ for $(C,X,H,J,\Pi_P),$  $T_{\sigma W^*}$ for $M$, and  (34)-(38).
\end{theorem}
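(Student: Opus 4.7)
The plan is to follow the template of Theorems \ref{OcneanuTh}, \ref{RaynaudTh} and \ref{GrohTh}: I will check that any septuple $(C,X,H,J,\Pi_P,\phi,M)$ of the stated form gives a model of $T_{AHW^*}$, that any model arises this way, and that the correspondence is bijective on morphisms, morphisms in the septuple category being pairs of a Groh--Raynaud morphism mapping the support projection of $\phi_1$ to that of $\phi_2$ together with a compatible Ocneanu morphism that intertwines the two $P_k$'s. The forward direction combines the earlier theorems: $(C,X,H,J,\Pi_P)$ models $T_{C^*,(W^*)_*,SF}$ by Theorem \ref{RaynaudTh} and, since $\varphi = \phi|_M$ is a faithful normal state whose modular group is the restriction of $\sigma^\phi$ to $M = eX^*e$ by Takesaki's theorem \cite[Th IX.4.2]{TakesakiBook} (applied to the $\phi$-preserving conditional expectation $a\mapsto eae$), $(M,\varphi)$ models $T_{\sigma W^*}$ by Theorem \ref{OcneanuTh}. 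Axioms (34)--(35) are tautologies for the compression $P = j^{-1}\circ(e\cdot e)$; (36) is the $L^1$--$L^\infty$ duality between $X$ and $X^*$ combined with the continuity estimates of Lemma \ref{ContinuityProduct}; (37) follows from Kaplansky density of $C$ in $X^*$ (approximate $j(F_N(y))^*$ strongly-$*$ by bounded nets in $C$) together with a spectral-algebra computation showing that $H_{NM+1}(F_N(y)F_M(Y)) = F_N(y)F_M(Y)$; and the identities in (38) are variants of Lemma \ref{EquationForms} for $\Delta_\phi$, together with a Lax--Milgram identity of the type used in Lemma \ref{NormG} for the form $\langle \Delta_\phi^{1/2}(exe)\xi_\phi, (eye)\xi_\phi\rangle$.

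For the converse, suppose a model is given. Theorems \ref{RaynaudTh} and \ref{OcneanuTh} produce $(C,X,H,J,\Pi_P)$ and $(M,\varphi)$ with the expected structures; let $e$ be the support projection of $\phi\in X$. The last inequality of (35) gives $\|P(a)\|_\varphi^* \leq \sqrt{\phi(aa^*)}$, so $P$ vanishes on $\{a\in C: ae = 0\}$ and, using in addition weak-$*$ density of $C$ in $X^*$, factors through a state-preserving unital completely positive map $\tilde P : eX^*e \to M$. The core task is to show $\tilde P$ is a $\ast$-isomorphism, which I will do by constructing its inverse $\jmath$ first on the dense $\ast$-subalgebra $A = \mathrm{Vect}\{F_N(y) : y\in M, N\in\N^*\}$ of step 2 of Theorem \ref{OcneanuTh}. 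The first line of (37) says that the linear functional $x\mapsto \mathscr{E}_{0,N,\infty}(y,P(x))$ on $C$ is uniformly approximable on the unit ball by $x\mapsto \phi(zx)$ with $z\in C$; since the functionals of the form $\phi(z\cdot)$ are $\|\cdot\|_X$-dense in the range of $m_1^l(\cdot,\phi): X^*\to X$, this associates to each $F_N(y)$ a unique element $\jmath(F_N(y))\in eX^*e$ with $\phi(\jmath(F_N(y))^*\cdot) = \varphi(F_N(y)^*P(\cdot))$. The second line of (37) then forces $\jmath(F_N(y)F_M(Y)) = \jmath(F_N(y))\jmath(F_M(Y))$, the $H_{NM+1}$-combination there being designed precisely so that it acts as the identity on the spectrum of the product (as in the associativity check of Theorem \ref{OcneanuTh}). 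Axiom (36) extends $\jmath$ by norm continuity to all of $M$, and the third line of (38) yields the surjectivity of $\tilde P$ through a weak-$*$ density of $P(C)$ in $M$.

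It remains to identify the modular structures. The first two lines of (38) express $\mathscr{E}_{P,\alpha+\beta}$ as a Riemann integral of $\mathscr{E}_{\beta,N,\infty}(G_{s}(P(x)), P(y))$ exactly as in Lemma \ref{EquationForms}, identifying $\mathscr{E}_{P,1}$ on the Ocneanu side with the form $\langle \Delta_\phi(P(x)\xi_\phi), P(y)\xi_\phi\rangle$ computed via $\jmath$ on the standard-form side. The final Lax--Milgram infimum formula in (38) then equates, via the standard-form data $(\pi, J, \xi_\phi)$ on one side and the forms $\mathscr{E}_{\alpha,N,M}$ on the other, two expressions for the squared norm $\|\Delta_\phi^{1/2}(1+\Delta_\phi)^{-1/2}\jmath(F_N(y))\xi_\phi\|^2$; by functional calculus this forces $\Delta_\varphi = \Delta_\phi|_{eX^*e}$ under $\jmath$, so $\jmath$ intertwines the modular groups and, by uniqueness of the KMS modular group of a faithful normal state, is the $\ast$-isomorphism realizing the identification. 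The main obstacle will be that one cannot first build $\jmath$ as a bare $\ast$-isomorphism and verify modular compatibility afterwards: since $P$ is only given as a unital completely positive map, its multiplicative inverse exists a priori only on the smeared algebra $A$, so the algebraic identification (via (37)) and the modular identification (via (38)) must be carried out jointly at the level of sesquilinear forms before being extended to all of $M\simeq eX^*e$ by the continuity estimates of (35)--(36). Once this is done, functoriality of the correspondence follows exactly as in Theorems \ref{OcneanuTh} and \ref{RaynaudTh}, since model morphisms preserve $P$ and therefore intertwine the compressions $e\cdot e$ on the two sides.
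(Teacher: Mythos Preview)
Your overall architecture is right—build $j:M\to eX^*e$ on $A$ via (36)--(37), check it is a state-preserving $\ast$-homomorphism, then use (38) to close the loop—but you misidentify what the last formula in (38) actually does, and this is the crux of the converse direction.

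The last display of (38) is a one-sided inequality $\max(0,\inf_z(\cdots)-\inf_Z(\cdots))=0$, not an equality. In the paper, both infima are computed (Lax--Milgram, as in Lemma \ref{NormG} but for $\Delta^{1/2}$ in place of $\Delta$) to yield
\[
\|\Delta_\phi^{1/4}(u+\Delta_\phi^{1/2})^{-1/2}(exe)\xi_\phi\|^2 \;\le\; \|\Delta_\varphi^{1/4}(u+\Delta_\varphi^{1/2})^{-1/2}(P(x))\xi_\varphi\|^2,
\]
which after integrating against $u^{-2\alpha}\,du$ and letting $\alpha\to 1/2$ gives $\|exe\xi_\phi\|\le\|P(x)\xi_\varphi\|$. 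The \emph{reverse} inequality does not come from (38) at all: it is the Cauchy--Schwarz bound $\varphi(P(x)^*P(x))=\phi(x^*j(P(x)))\le\|exe\xi_\phi\|\,\|P(x)\xi_\varphi\|$, which is available precisely because $j$ has already been shown (via (37)) to be a state-preserving $\ast$-homomorphism with $P\circ j\circ P=P$. The two together force $\|(j(P(x))-exe)\xi_\phi\|=0$, hence $j(P(x))=exe$ and $j$ is onto. So your statement that ``one cannot first build $\jmath$ as a bare $\ast$-isomorphism and verify modular compatibility afterwards'' is the opposite of what happens: the paper \emph{does} secure the $\ast$-homomorphism property first and uses (38) only for surjectivity; modular compatibility then follows automatically from the state-preserving isomorphism. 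Your reading of the third line of (38) (weak-$*$ density of $P(C)$ in $M$) is correct, but it feeds the argument above—it does not by itself give surjectivity of $\tilde P$, and (36) is used to define $i$ on $A$ as a bounded map into $L^*=eX^*e$, not to extend to all of $M$.
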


\begin{proof}
We already know that an element in the class produces a model except for the verification of (37)-(38).

Seeing $F_N(y)\in eX^*e$ and $P(x)=exe$ we have \[\mathscr{E}_{0,N,\infty}(y,P(x))=\phi(F_N(y^*)exe)=\phi(F_N(y^*)x)=\langle F_N(y)\xi_\phi, x\xi_\phi\rangle .\] But we know that $C$ is strong-* dense in $X^*$ thus there is a net $a_n\in C$ bounded in $D_m(C)$ (for $y\in D_m(M)$) with $a_n\to F_N(y^*), a_n^*\to F_N(y)$ strongly, thus $\sup_{x\in D_1(V)}|\mathscr{E}_{0,N,\infty}(y,P(x))-\langle a_n^*\xi_\phi, x\xi_\phi\rangle | \leq || (F_N(y)-a_n^*)\xi_\phi||\to 0,$ giving the first statement in (37) about the  infemum \[\inf_{z\in D_m(V)}\sup_{x\in D_1(V)}\max(|\mathscr{E}_{0,N,\infty}(y,P(x))-tr(m_1^l(zx,\phi)|,|\mathscr{E}_{0,N,\infty}(y^*,P(x))-tr(m_1^l(z^*x,\phi)|)=0.\]

Similarly, one takes $b_n\to F_M(Y^*)$ strongly so that 
\begin{align*}\langle a_n^*b_n^*\xi_\phi, x\xi_\phi\rangle&\to\langle F_N(y)F_M(Y)\xi_\phi, x\xi_\phi\rangle= \phi(F_M(Y^*)F_N(y^*)x)\\&=(NM+2)\mathscr{E}_{0,NM+2,\infty}(m_{N,M}(y,Y),P(x))-(NM+1)\mathscr{E}_{0,NM+1,\infty}(m_{N,M}(y,Y),P(x))
\end{align*} and the convergence can even be uniform in $x$ giving the last equality in (37).

For the first inequality in (38),  using $\Delta^\beta\leq 1+\Delta$, the resolvent equation $G_r-G_0e^{r/2}=e^{r/2}(1-e^r)G_0(\Delta+e^r)^{-1}$, $||(\Delta+e^r)^{-1}||\leq e^{-r}$, $||P(y)||_\varphi^\#\leq 2||y||\leq 2m$ and lemma \ref{NormG}, one gets :
\begin{align*}|\E_{P,\beta,r}(x,y)-\mathscr{E}_{\beta,N,\infty}(G_r(P(x)),P(y))|&=|\mathscr{E}_{\beta}(G_r(P(x))-F_N(G_r(P(x))),P(y))|\\&\leq || G_r(P(x))-F_N(G_r(P(x)))||_\varphi^\# ||P(y)||_\varphi^\#\\&\leq 4me^{-r/2}(e^r+|e^r-1|)d(F_N(P(x))-P(x))),0) \end{align*}

The second equation in (38) is a substitute to (19) with $F_L,F_K$ replaced by $P$. Note we could not have done the same with (20), {this is the main correction from a previous preprint version.}

The third statement in (38) comes from the weak-* density of $P(C)=eCe$ in $M=eX^*e$. The last inequality is more technical. 
Recall $e$ denotes the support projection of $\phi$.
First note the following computation (where we write for short $\pi(y,\xi_\phi)=y.\xi_\phi$ and use commutation of $JyJ$ with $e\in X^*, J^2=J,J\xi_\phi=\xi_\phi=e\xi_\phi$):
\begin{equation}\label{Inserte}\langle z.\xi_\phi ,J(y.\xi_\phi)\rangle=\langle z.\xi_\phi ,JyJe\xi_\phi\rangle=\langle ez.\xi_\phi ,Jy\xi_\phi\rangle=\langle y.\xi_\phi ,Jez\xi_\phi\rangle=\langle ey.\xi_\phi ,Jez\xi_\phi\rangle=\langle z.\xi_\phi ,eJe(y.\xi_\phi)\rangle\end{equation}
We deduce that considering the restriction of the inf on $z$ to those elements of the form $eze$ and since the inf over $D_l(V)$ is the same as  one over $D_l(X^*)$ by strong-$*$ continuity of the expression and density. 

\begin{align*}&\inf_{z\in (D_l(V))}\Re(tr(\|\pi(ez,\xi_\phi)\|^2+\frac{1}{u}(\langle\pi((z-x)^*,\xi_\phi) ,J(\pi(z-x,\xi_\phi))\rangle\\&\geq
\inf_{z\in (D_l(V))}\Re(tr(\|\pi(z,\xi_\phi)\|^2+\frac{1}{u}(\langle\pi((z-x)^*,\xi_\phi) ,J(\pi(z-x,\xi_\phi))\rangle\end{align*}
Now we express this formula in terms of $\varphi$ on $eX^*e\simeq M$. First note for instance that $\langle\pi((ze)^*,\xi_\phi) ,J(\pi(ez,\xi_\phi))\rangle=\langle J_\phi\Delta_\phi^{1/2}(P(z)\xi_\phi) ,eJe(P(z)\xi_\phi)\rangle=||\Delta_\phi^{1/4}(P(z)\xi_\phi)||^2$ since (using commutation of $e$, $JeJ$ and $J^2=1$) $(eJeJ)J(eJeJ)=eJeJeJ=JeJeJ=eJeJJ=eJe=J_\phi$ for $e$ the support projection of $\phi$ by \cite[lemmas 2.6,2.9]{Haagerup} and using the isomorphism of his corollary 2.5 and the previous computation to replace $e$ by $q=eJeJ$ $\langle P(z^*)\xi_\phi) ,eJe(P(z)\xi_\phi)\rangle=\langle qz^*q\xi_\phi ,eJeqzq\xi_\phi\rangle$ (one also uses $\xi_\phi=q\xi_\phi$ cyclic and separating on $qX^*q$ acting on $qH$), so that one gets using \eqref{Inserte}: 
\begin{align*}&\Re(tr(\|\pi(ez,\xi_\phi)\|^2+\frac{1}{u}(\langle\pi((z-x)^*,\xi_\phi) ,J(\pi(z-x,\xi_\phi))\rangle\\&=\Re(tr(\|\pi(ez,\xi_\phi)\|^2+\frac{1}{u}(\langle\pi(e(z-x)^*,\xi_\phi) ,J(\pi(ez-ex,\xi_\phi))\rangle\\&=\Re\left(\varphi(P(z^*)P(z))+ \frac{1}{u}\langle \Delta^{1/2}_\varphi P(z-x)\xi_\varphi,P(z-x)\xi_\varphi\rangle\right).\end{align*}

By strong-* density of $P(C)$ the infemum over $P(z),z\in D_l(V)$ is the same with $Z=P(z)$ replaced by $Z\in D_l(U)$ and one gets a huger infemum if one restricts to convex combinations of the form $\sum_{i=1}^n\lambda_i F_{N_i}(Z),Z\in D_l(U)$. In that form, this gives the inequality at the source of the second statement in (38), namely for $||x||\leq m$:
\begin{align*}\inf_{z\in (D_l(V))}&\Re(tr(\|\pi(z,\xi_\phi)\|^2+\frac{1}{u}(\langle\pi((z-x)^*,\xi_\phi) ,J(\pi(z-x,\xi_\phi))\rangle\leq\frac{1}{u}\E_{1/2,P}(x,x)+\\&\inf_{Z\in (D_l(U))}\left[\sum_{i=1}^n\lambda_i\sum_{j=1}^n\lambda_j(\E_{0,N_i,N_j}(Z,Z)+\frac{1}{u}\E_{1/2,N_i,N_j}(Z,Z))\right.\\&\left.+\frac{1}{u}\sum_{i=1}^n\lambda_i(\E_{1/2,\infty,N_i}(P(x),Z)+\E_{1/2,N_i,\infty}(Z,P(x)))\right]\end{align*}

One gets exactly the last formula in (38).

Conversely, take $(C,X,H,J,\Pi_P)$ model of the Groh-Haagerup-Raynaud theory and $M$ model of the Ocneanu theory. Since we checked $d_U(F_N(x),x)\to 0$ we indeed obtain from (34) the expected $\E_{\beta,M,\infty}(y,x)=\E_{\beta}(F_N^\varphi(y),x).$
Note also that the inequality in (35) implies that $P$ is uniformly continuous from the strong operator topology on $C$ induced from $X^*$ to the topology of the metric $d$. Thus, by 
\cite[\S 5.4.(4)]{Kothe}, it extends to a uniformly continuous map we still call $P:X^*\to M.$ Moreover, this extension is weak-* continuous by standard relation to the topology of $d$ and the strong topology.

Consider the (norm) closures $L,V$ of the spaces spanned respectively by $m^l_1(x,\phi)+m^r_1(\phi,z)\in X, x,z\in C, $ and  $m^l_1(x,\phi)\in X, x\in C$. It is known from the proof of \cite[lemma III.3.6]{TakesakiBook} that $V$ is the smallest left invariant (by $C$, thus $X^*$ by weak-* density and usual applications of Hahn-Banach to identify weak-closures and norm closures) subspace containing $\phi$, which is $Xe$ for $e$ the support projection of $\phi$. Moreover $L=V+V^*=Xe+eX$ so that the dual $L^*=eX^*e.$ The  equality in (36) then states that the map :\[i(F_N^\varphi(y^*)):m^l_1(x,\phi)+m^r_1(\phi,z)\mapsto \varphi(F_N^\varphi(y^*)P(x))+\varphi(P(z)F_N^\varphi(y^*))\] extends to a continuous linear form on $L$ giving a contractive linear map $i:A\to eX^*e$ for the operator norm (since, recall $A=Vect(F_N(x), x\in M, N\in \N^*)$). Note that $i(1)=e$ and also that $i$ is uniformly continuous on bounded sets from the strong-* topology (given by the metric $||.||_\varphi^{\#}$) to the weak-* topology on $eX^*e$. 
By \cite[\S 5.4.(4)]{Kothe} again and Kaplansky's density theorem, it extends to a uniformly continuous map \[j:M\to eX^*e\] which is especially weak-* continuous (by standard compactness and Hahn-Banach arguments
).
Notice that the interpretation of terms in the two first equations in (37) that we want to use : \[(\phi.i(F_N(y^*)))(x)+(i(F_N(y^*).\phi))(z)=(x.\phi+\phi.z)(i(F_N(y^*)))=\varphi(F_N^\varphi(y^*)P(x)+P(z)F_N^\varphi(y^*))\] and 
\begin{align*}&(NM+2)\mathscr{E}_{0,NM+2,\infty}(m_{N,M}(y,Y),P(x))-(NM+1)\mathscr{E}_{0,NM+1,\infty}(m_{N,M}(y,Y),P(x))\\&=(\phi.i(H_{NH+1}(F_M(Y^*).F_N(y^*))))(x)=(\phi.i(F_M(Y^*).F_N(y^*)))(x).\end{align*}

Now, we can use the first equation in (37) which says exactly that for $y\in A$, there is a sequence $z_n\in C$ such that $||\phi.z_n-\phi.(i(y))||_X,||\phi.z_n^*-\phi.(i(y^*))||_X\to 0.$ Taking a subnet such that $z_n\to z$ ultraweakly in $X^*$ which implies $\phi.z_n$ converges weakly in $X$ to $\phi.z$, one gets using the first equation we just noticed :\[\phi.z=\phi.(i(y)),\ \ \ \  \phi.z^*=\phi.(i(y^*))\] and thus $ez=i(y),e.z^*=i(y^*).$ Thus, extending from $y\in A$ to $x\in M$ the relation $[i(y)e]^*=[eze]^*=i(y^*)e$, we have obtained \[[j(x^*)]=[j(x^*)]e=e[j(x)]^*=[j(x)]^*.\]

Similarly, the second equation in (37) says that for $y,Y\in A$, there are sequences $z_n,Z_n\in C$ such that $||\phi.z_n^*-\phi.(i(y^*))||_X\to 0,||\phi.Z_n-\phi.(i(Y))||_X\to 0, ||\phi.(Z_nz_n)-\phi.(i(Y.y))||_X\to 0.$ Arguing as before, one gets for limit points $z,Z$ of $z_n,Z_n$ in $X^*$, \[ez^*=i(y^*), \ \ \  eZ=i(Y),   \ \ \ \  eZz=i(Y.y)\]
 and thus $i(Y.y)e=eZze=i(Y)[i(y^*)]^*$ so that extending by ultraweak continuity (separately in $y,Y$):
 \[j(Y.y)= j(Y.y)e=j(Y)[j(y^*)]^*=j(Y)j(y)\]
and $j$ is thus a $*$-homomorphism.

 Note that making $N\to \infty$ in the relation $(x.\phi+\phi.z)(i(F_N(y^*)))=\varphi(F_N^\varphi(y^*)P(x)+P(z)F_N^\varphi(y^*))$
one gets:
\[(x.\phi+\phi.z)(j(y))=\varphi(yP(x)+P(z)y)\]

Thus $\phi(j(y))=\varphi(y)$ 
and the isometry relation $\phi(j(y)^*j(y))=||y||_\varphi^2$ implying that $j$ is one-to-one (since $\varphi$ faithful on $M$).

Moreover $\phi(j(P(a)))=\varphi(P(a))=\phi(a)$ by the last equality in (35) for $a\in C$. Note also that for $x\in C, y\in M$ $\phi(xj(y))=\varphi(P(x)y)$ and this extends to $x\in X$, since $P$ is strong to metric continuous on bounded set by (35), thus weak-* continuous. 
The relation thus extends to $x\in X^*$ thus \[\varphi(P(j(P(x)))y)=\phi(j(P(x))j(y))=\phi(j(P(x)y))=\varphi(P(x)y),\] 
and since $\varphi$ is faithful on $M$, $P\circ j\circ P=P$ on $X^*$ and thus $j\circ P$ is a projection on $X^*$ to $j(M).$

As a consequence we also deduce :\begin{align}\label{IneqPhiVarphi}\varphi(P(x^*)P(x))=\phi(x^*j(P(x)))\leq\|exe\xi_\phi\|\|j(P(x))\xi_\phi\|=\|exe\xi_\phi\|\|P(x)\xi_\varphi\|\end{align}

Note also that this relation implies $j(1)=e$ since we know it is in $eX^*e$ and it has the expected formula for $e$ on the predual of $eX^*e.$ As a consequence since from (35) $P(1)=1$, $P(e)=P\circ j\circ P(1)=P(1)=1.$

Let us finally check that $j$ is onto : $j(M)=eX^*e$. 
The two first equation in (38) determine uniquely $\mathscr{E}_{P,\beta,r},\mathscr{E}_{P,\beta}$ so that we will be able to use $\mathscr{E}_{P,1/2}$ in the last equation with its expected interpretation.
 
 One uses the third equation in (38) to note that $P(C)$ is weak-* dense in $M$ and take $x\in D_m(U)$, $\epsilon>0$ and $z_1,...,z_n\in M$, and find convex combinations, from the proof of Theorem \ref{OcneanuTh}, with $\|\sum_{i=1}^K\lambda_{i,j}F_{N_{i,j}}(z_j)-z_j\|_\varphi \leq \epsilon/4m$, then find $y\in D_m(U)$ given by the first equation in (38) such that for all $j=1,...,n$: $|\varphi((\sum_{i=1}^K\lambda_{i,j}F_{N_{i,j}}(z_j))^*(F_M(x)-P(y)))|\leq \epsilon/2$ and thus \[|\varphi[z_j^*(F_M(x)-P(y))] |\leq 2m\epsilon/4m+\epsilon/2=\epsilon\]
Since $M$ is dense in $L^1(M,\varphi)$ this concludes to the stated weak-* density of $P(C)\subset M$. Note that as a consequence $j\circ P$ is a state preserving norm 1 projection from $(eXe,\phi)$ onto $(j(M),\varphi\circ j^{-1}).$

We can now use the last equation of (38) and recall that we already computed the first infemum in a more explicit form  and after taking an infemum over $l$:
\begin{align*}&\inf_{z\in C}\Re(tr(\|\pi(z,\xi_\phi)\|^2+\frac{1}{u}(\langle\pi((z-x)^*,\xi_\phi) ,J(\pi(z-x,\xi_\phi))\rangle\\&=\inf_{z\in C}\Re(tr(\|\pi(ez,\xi_\phi)\|^2+\frac{1}{u}(\langle\pi(e(z-x)^*,\xi_\phi) ,J(\pi(e(z-x),\xi_\phi))\rangle\\&=\inf_{z\in X^*}\Re\left(\|\pi(eze,\xi_\phi)\|^2+ \frac{1}{u}\langle \Delta^{1/2}_\phi e(z-x)\xi_\phi,e(z-x)\xi_\phi\rangle\right)\\&=||\Delta_\phi^{1/4}(u+\Delta_\phi^{1/2})^{-1/2}(exe)\xi_\phi||^2\end{align*}
with the last equation coming from a variant of lemma \ref{NormG} for $\Delta^{1/2}$ instead of $\Delta$.

Reasoning similarly with the right hand side and approximating $Z$ by convex combinations of $F_{N_i}(Z)$, one gets from (38):
\[||\Delta_\phi^{1/4}(u+\Delta_\phi^{1/2})^{-1/2}(exe)\xi_\phi||^2\leq ||\Delta_\varphi^{1/4}(u+\Delta_\varphi^{1/2})^{-1/2}(P(x))\xi_\varphi||^2.\]
Using the variant of lemma \ref{EquationForms} with again $\Delta^{1/2}$ instead of $\Delta$, for $\alpha\in]0,1/2[$, $\Delta^{1/2-\alpha}=\frac{\sin(2\alpha\pi)}{\pi}\int_0^\infty u^{-2\alpha}\Delta^{1/2}(u+\Delta^{1/2})^{-1}$ so that integrating the previous inequality and letting $\alpha\to 1/2$, one gets:
\[||exe\xi_\phi||^2\leq ||P(x)\xi_\varphi||^2.\]
Since the converse was already obtained in \eqref{IneqPhiVarphi}, one gets equality and using the various equations already obtained for $j,P:$
\[||(j(P(x))-exe)\xi_\phi||^2=||j(P(x))\xi_\phi||^2+||exe\xi_\phi||^2-2\Re\phi(x^*j(P(x)))=||exe\xi_\phi||^2-||P(x)\xi_\varphi||^2=0,\]
so that $j(P(x))=exe$ for any $x\in X^*$, thus $eX^*e\subset j(M)$ as expected implying that $j:M\to eX^*e$ is onto.
Finally, via this isomorphism, $P(x)=exe$ as expected, and this defines $P$ uniquely. This concludes the identification of the structures.

\end{proof}

As a consequence, one obtains the stabillity by ultraproduct from \cite[Th 6.11]{HaagerupAndo} in the case of a general non-principal ultrafilter $\omega$ not necessarily on $\N$. We already used it in section 1 to obtain the existence of a first order axiomatization of  $III_\lambda$ factors.

\begin{corollary}\label{AHlambda}
Let $\lambda\neq 0$ fixed. Let $M_n$ be $\sigma$-finite factors of type $III_\lambda$,  and consider faithful normal states $\varphi_n$, then $(M_n,\varphi_n)^\omega$ is also a factor of type $III_\lambda$.
\end{corollary}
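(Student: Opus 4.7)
The plan is to deduce the result by combining three ingredients: the Ando--Haagerup theory $T_{AHW^*}$ of Theorem \ref{AH}, which realises the Ocneanu ultraproduct as the corner $eX^*e$ of a Groh ultraproduct; the explicit axiomatization of preduals of $III_\lambda$-factors by their state-space diameter (the $(III_\lambda)_*$ part of Theorem \ref{GrohTh}); and the classical fact that non-zero corners of $III_\lambda$-factors remain of type $III_\lambda$. Since all three ingredients are encoded in first-order continuous logic, no restriction on $\omega$ is needed.

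First I would assemble, for each $n$, the septuple $(M_n,X_n,H_n,J_n,\Pi_{P_n},\phi_n,M_n)$, where $X_n=(M_n)_*$, $(H_n,J_n,P_n)$ is the standard form of $M_n$, and $\phi_n\in X_n$ corresponds to $\varphi_n$; since $\varphi_n$ is faithful, its support projection is $1$ and $M_n=1\cdot X_n^*\cdot 1$, so this is a bona fide model of $T_{AHW^*}$. Moreover, its Groh component $(M_n,X_n)$ satisfies the stronger theory $T_{C^*,(III_\lambda)_*}$: axiom (32) holds with $d=2\frac{1-\sqrt\lambda}{1+\sqrt\lambda}$ because $M_n$ is a $III_\lambda$-factor, and it is infinite-dimensional since every type $III$ factor is.

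Next I would take the model-theoretic ultraproduct of these septuples. Theorem \ref{AH} identifies the result with a septuple $(C,X,H,J,\Pi_P,\phi,M)$ satisfying $T_{AHW^*}$, in which $(C,X)$ still satisfies $T_{C^*,(III_\lambda)_*}$ by the fundamental theorem of continuous logic applied axiom-by-axiom (this is where the \emph{explicit} continuous-logic axiomatization from Theorem \ref{GrohTh} pays off; the corresponding operator-algebraic result in \cite{HaagerupAndo} would only handle ultrafilters on $\mathbb{N}$). Theorem \ref{OcneanuTh} identifies $(M,\varphi)$ with the Ocneanu ultraproduct $(M_n,\varphi_n)^\omega$, while Theorem \ref{AH} gives the isomorphism $M\simeq eX^*e$ with $e$ the support projection of $\phi$, and Theorem \ref{GrohTh} tells us that $X^*$ is a $III_\lambda$-factor.

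It then only remains to invoke standard structure theory: $\phi$ is a non-zero normal state on the factor $X^*$, so its support $e$ is a non-zero projection, and a non-zero corner $eX^*e$ of a $III_\lambda$-factor is again a $III_\lambda$-factor (for instance, Connes' invariant $S$ is unchanged under reduction by a non-zero projection in a factor). Hence $(M_n,\varphi_n)^\omega\simeq eX^*e$ is a $III_\lambda$-factor. The only step with any real content is the transfer of axiom (32) through the ultraproduct, and this is essentially automatic from the continuous-logic axiomatization; the rest is bookkeeping and textbook structure theory.
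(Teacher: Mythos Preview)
Your proof is correct and follows essentially the same route as the paper: assemble the septuples as models of $T_{AHW^*}$ with the Groh component additionally satisfying $T_{C^*,(III_\lambda)_*}$, pass to the ultraproduct using \L o\'s/the fundamental theorem of ultraproducts to preserve both theories, and conclude that the Ocneanu ultraproduct is a corner of a $III_\lambda$-factor and hence a $III_\lambda$-factor. The paper's proof is more terse but structurally identical; your emphasis on why the explicit continuous-logic axiomatization is needed to handle arbitrary $\omega$ is exactly the point.
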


\begin{proof}
$(M_n,(M_n)_*,L^2(M,\varphi_n), J, \Pi_P,\varphi_n,M_n)$ satisfies $T_{AHW^*}$ and moreover $(M_n,(M_n)_*)$ satisfies $T_{C^*,(III_\lambda)_*}$ thus so does $((M_n)^\omega,((M_n)_*)^\omega,L^2(M,\varphi_n)^\omega, J^\omega, \Pi_P^\omega,\varphi_n^\omega,(M_n,\varphi_n)^\omega)$ (with the first three spaces being the Banach space ultraproducts and the last one being the Ocneanu ultraproduct). Thus $(M_n,\varphi_n)^\omega\simeq p[((M_n)_*)^\omega]^*p$ is a corner of a $III_\lambda$ factor and thus especially, a $III_\lambda$ factor.
\end{proof}

\section{Axiomatization of cross-products appearing in Connes' description of type $III_0$ factors}

Finally, it seems interesting to axiomatize the crossed-product decomposition of $III_0$-factors from \cite[Th 5.3.1]{ConnesThesis}. We won't axiomatize in this way $III_0$ factors that are not axiomatizable by the above cited result of \cite{HaagerupAndo}, but we will obtain a class useful to study them. This section will thus mostly contain a model theory version of the proof of their proposition 6.23. It gives in corollary \ref{AHzero} the missing non-stability by ultraproduct needed in the proof of axiomatizability of type $III_\lambda$ factors, for fixed $\lambda>0.$ This is the most technical work hidden in this natural result. This result could also probably have been written in operator algebraic style as in  \cite{HaagerupAndo}, but having an explicit axiomatization will be crucial for later model theoretic investigation of $III_0$ factors and thus deserves a first model theoretic study here. As for $II_1,II_\infty$ factors, we don't expect axiomatizability without fixing the state, matrix unit and extra data in the language.
 
We want to axiomatize cross products of $\Z$ (with implementing unitary $U$) by a $II_\infty$ von Neumann algebra $N$ that will be the centralizer for a lacunary normal faithful semifinite weight $\psi$ (thus with spectrum included in $(\R-]\log(\lambda_0),-\log(\lambda_0)[)\cup\{0 \}$ for some $\lambda_0\in]0,1[$). We will use a matrix unit $(w_{i,j})_{i,j\geq 0}$ for $B(H)$ with some geometric state $\varphi$ that will be our basic state in our previous language. We will model the semifinite weight by Connes cocycle derivative with respect to our faithful state (see e.g. \cite{TakesakiBook}) and various compression by well chosen projections related to the matrix unit. 
We will also need some of the modular theory  for $\psi$.

Unfortunately, saying that $U$ is a unitary is not obvious in our non-tracial setting, and we will thus use what we called before Ando-Haagerup theory, to have the $C^*$-algebra in Groh's theory to require $u$ in it (or rather $u=eUe+1-e$ or any other unitary projecting to $eue=U$ and the theory will depend on the choice of this unitary in the $C^*$-algebra). 
This is the key part restricting the choice of the language and which is crucial to get some stability results by ultraproducts and not only ultrapowers as in \cite{HaagerupAndo}.

This will however enable to express easily enough the unitarity of $eue$ since more maps can be defined on the $C^*$-algebra. {With our new section 3 corrected from a previous preprint version (especially axiom (38))}, this will not be completely easy since $\varphi(P(.)P(.))$ has not been defined in section 3 (and could not have been defined). 
Our solution won't be completely satisfactory since it will axiomatize only a class of cross-product depending on parameters such as $\lambda_0$ above and we won't axiomatize every cross product with the same theory which would require to axiomatize the union over all parameters. This will be enough though to obtain the stability by ultrapower we are aiming at to apply in  corollary \ref{AHzero}, thus extending \cite{HaagerupAndo}. 


We will fix $\lambda_{j}, j\geq 0$ increasing in $j$ satisfying : $\lim_{j\to\infty} \lambda_{j}=1$ and such that for $i\geq 0:$
\begin{equation}\label{wLmn}\langle \Delta(e^{-j}+\Delta)^{-1}(u\xi_\varphi),(u\xi_\varphi)\rangle\geq \lambda_{j};\ \ \   \sum_{l=0}^j\varphi(uw_{l,l}u^*)\geq \lambda_{j}.\end{equation}

Of course, in any cross-product (for instance in any $III_0$-factor), there is a choice of parameters satisfying this if we take $\lambda_{j}$ the minimum of the two values it has to bound, which is increasing in $j$ and converges as expected to $1$.

We are now ready to state what we need to add in the language of the Ando-Haagerup theory.

\begin{itemize}
\item[$\bullet$]The constant $u$  in $D_1(V)$ for a preimage of the implementing unitary. 
\item[$\bullet$] Constant symbols $W_{i,j}\in D_1(V),i,j\in\N$ with $w_{i,j}=P(W_{i,j})\in D_1(U),i,j\in\N$ for a matrix unit
. Unary function symbols $P_{i,j}:D_1(U)\to D_1(U),i,j\in\N$ meaning $P_{i,j}(x)=w_{i,i}xw_{j,j}.$
\item[$\bullet$] Constant symbols $u_t,t\in\Q$ in $D_1(U)$ for Connes' cocycle derivatives $(D\varphi:D\psi)_t$ and unary function symbols $\Sigma_t$ (for $\sigma_t^\psi$).
\item[$\bullet$] Unary function symbols for $(m,l)\in\Q^*,m> 0$ $\Psi_{m,l}:D_n\to  D_n, \Psi_{N,0}=\Psi_N$ (for Fejer's map $F_{m,l}^\psi$)
\item[$\bullet$] Binary function symbols for $N\in\N$ $m_{N,\infty,P},m_{\infty,N,P}$ (meaning $F_{N}^\varphi(.)P(.),P(.)F_{N}^\varphi(.)$)
\item[$\bullet$] Unary function symbols for $r\in\Q$ $\Gamma_{r}:D_n(U)\to  D_n(U), $ (meaning $G_r^\psi$)
\item[$\bullet$] Binary relation symbols $\psi_{r,n,m,P}
$ for $(n,m)\in\N^2,r\in\Q$
  meaning \[\psi_{r,n,m,P}(x,y)= \langle G_r^\psi(w_{m,m}P(x)w_{n,n})\xi_\psi,P(y)w_{n,n}\xi_\psi\rangle,
  \]
  Binary relation symbols $\mathscr{E}_{\beta,P}^{\psi,n,m}
  $
   for $(n,m)\in\N^2,\beta\in
   \Q\cap]0,1/2[
   ,$
  meaning \[\mathscr{E}_{\beta,P}^{\psi,n,m}(x,y)= \langle \Delta_\psi^{\beta }(w_{m,m}P(x)w_{n,n}\xi_\psi),P(y)w_{n,n}\xi_\psi\rangle
  ,\]
\item[$\bullet$] Unary function symbols for $N,k\in \N^*$ \[\Pi_{N,k},E_{N,k},EU_{N,k},\theta EP,\overline{\theta} EP,u EP,u^* EP:D_m(V)\to D_m(U)\] for \[\Pi_{N,k}(x)=P(u^k)\Psi_N(P(x)),E_{N,k}(x)=E_{M_\psi}(P(u^k)\Psi_N(P(x))),\] \[EU_{N,k}(x)=P((u^k)^*)E_{M_\psi}(P(u^k)\Psi_N(P(x))),uEP(x)=P(u)E_{M_\psi}(P(x)),\]  \[\theta EP(x)=P(u)E_{M_\psi}(P(x))P(u^*), u^*EP(x)=P(u^*)E_{M_\psi}(P(x)),\overline{\theta}EP(x)=P(u^*)E_{M_\psi}(P(x))P(u) .\]
\end{itemize}

The uniform continuity constants are similar to previous sections. 
For instance, we bound $||\Sigma_t(x)||_\varphi^*=||\sigma_t^\varphi(u_txu_t^*)||_\varphi^*\leq 9||x||_\varphi^*$ from lemma \ref{stability} since $u_t$ is in the centralizer. From the uniform continuity bounds in section 3, one gets: $||F_{N}^\varphi(y)P(x)||_\varphi^*\leq ||(F_{N}^\varphi(y)P(x))^*||_\varphi\leq ||x||\ ||(F_{N}^\varphi(y))^*||_\varphi\leq e^{N/2}\sqrt{1+e^{N}}||x||\ ||y||_\varphi^*$.

We define, for $N=(N_1,...,N_n),\lambda=(\lambda_1,...,\lambda_n)$ :
\begin{align*}\epsilon_N(\lambda)^2=&2-\sum_{i=1}^n\overline{\lambda_i}\mathscr{E}_{0,N_i,\infty,N_1}(P(u),P(u),1)-\sum_{j=1}^n{\lambda_j}\mathscr{E}_{0,N_1,\infty,N_j}(1,P(u^*),P(u^*))\\&+\sum_{i,j=1}^n\lambda_i\overline{\lambda_j}\mathscr{E}_{0,N_i,\infty,N_j}(P(u),1,P(u^*))+\sum_{i,j=1}^n\lambda_j\overline{\lambda_i}\mathscr{E}_{0,N_i,\infty,N_j}(P(u^*),1,P(u))\\&-\sum_{i=1}^n{\lambda_i}\mathscr{E}_{0,N_i,\infty,N_1}(P(u^*),P(u^*),1)-\sum_{j=1}^n\overline{\lambda_j}\mathscr{E}_{0,N_1,\infty,N_j}(1,P(u),P(u))
\end{align*}
meaning $\epsilon_N(\lambda)=||P(u)-\sum_{i=1}^n\lambda_iF_{N_i}^\varphi(P(u))||_\varphi^\#$ if we know $\varphi(P(u)P(u^*))=1=\varphi(P(u^*)P(u)).$ Our axiom (42) based on the control of the spectrum of $u$ will insure those equalities  thanks to the various maps we will define  in axioms (43),(49). 

\medskip

We finally consider the following supplementary axioms beyond Ando-Haagerup theory and (24), depending on parameters $\lambda_0<1, \lambda_{j}$:
\medskip

\begin{enumerate}
\item[(39)] 
For $N\in\Q\cap]0,\infty[,l\in\Q^*,m\in\N^*$ with $]l-N,l+N[\subset ]\ln(\lambda_0),-\ln(\lambda_0)[-\{0\}$  
\[ \sup_{x\in D_{m}}d_U(\Psi_{N,l}(x)),0)=0,\]

and if $]l-N,l+N[\subset ]\ln(\lambda_0),\infty[$  
\[ \sup_{x\in D_{m}}d_U(\Psi_{N,l}(P(u))),0)=0,\]
and (25) with $x,y$ replaced by $2\Psi_{2n}(x)-\Psi_{n}(x),2\Psi_{2n}(y)-\Psi_{n}(y)$ with $2n\in \Q\cap]0,|\log(\lambda_0)|[,$ \[2\Psi_{2n}(w_{i,j})-\Psi_{n}(w_{i,j})=w_{i,j}.\]
For all $k,l\in \N,$\[tr(m_{1}^l(\phi,xW_{k,l}))=2\mathscr{E}_{0,\infty,2}(P(x),w_{k,l})-\mathscr{E}_{0,\infty,1}(P(x),w_{k,l})\]
\item[(40)]With the notation in (25) \[\sup_{x\in D_m(U)}\max(0,d(P_{k,j}(x),w_{k,k}F_N(x)w_{j,j})-9d(x,F_N(x)))=0\]

For $t\in\Q$, $P_{k,k}(u_t)= 2^{(k+1)it}w_{k,k}$
,$ P_{k,j}(u_t)=0,k\neq j$ and $2N\in\Q\cap]0,|\log(\lambda_0)|[$, \[2F_{2N}(u_t)-F_{N}(u_t)=u_t,2\Psi_{2N}(u_t)-\Psi_{N}(u_t)=u_t,\] \[2\Psi_{2N}(\theta EP(x))-\Psi_{N}(\theta EP(x))=\theta EP(x),2\Psi_{2N}(\overline{\theta} EP(x))-\Psi_{N}(\overline{\theta} EP(x))=\theta EP(x).\]
\item[(41)]For $N\in\Q\cap]0,\infty[, l\in\Q^*,M,m\in\N^*, K,L\in\N, L\geq 2K$  
\[\Sigma_{l}(F_M(x))=\sigma_{l}(M_{(M+4,0)}(M_{(0,M+1)}(u_{l},F_M(x)),u_{l}^*)\]
\begin{align*}&\sup_{x\in D_{m}}\max(0,-\frac{4m}{n^2}-\frac{16m}{\pi Nn^3}-\frac{(10+|l|N)m}{n\pi}-\frac{mN^3}{2n}+\\&d_U\left(\Psi_{N,l}(F_M(x)),\frac{N}{2\pi n^2}x+\sum_{k=-n^3,k\neq0}^{n^3-1}\frac{e^{il\frac{k}{n^2}}}{n^2}\frac{1-\cos(N\frac{k}{n^2})}{\pi N \frac{k^2}{n^4}}\Sigma_{k/n^2}(F_M(x))\right)=0,\end{align*}


\item[(42)]For any $K,m,n\in\N^*, \beta\in \Q\cap]0,1/2[$
\[
2^{-1}e^{K/2}\E_{P,1/2,-K}(u,u)\geq \lambda_{K}\]\[\sum_{l=0}^{m}\sum_{k=0}^n2^{-k-1}\E^{\psi,k,l}_{\beta,P}(u^*,u^*)\geq \lambda_0^{-\beta}[\lambda_{m}-2^{-n-1}(m+1)].\]
\item[(43)]For any $N,M,L\in\N^*$ $m_{\infty,N,P}(y,x)=m_{N,\infty,P}(x^*,y^*)^*$ \[\mathscr{E}_{0,N,\infty,L}(x,P(y),z)=\mathscr{E}_{0,\infty,L}(m_{N,\infty,P}(x^*,y),z)\]
For $r\in \Q,n,m,M\in\N$
:   \[\sup_{x\in D_{M}(U)}\max(0,d_U\left(\Gamma_s(x),\frac{1}{n^2}\sum_{k=-n^3}^{n^3-1}\frac{2e^{-is\frac{k}{n^2}}}{e^{\pi \frac{k}{n^2}}+e^{-\pi \frac{k}{n^2}}}\Sigma_{k/n^2}(x)\right)- \frac{8e^{-\pi n}M}{\pi}-\frac{16M}{n^2}-\frac{4(\pi+s)M}{n})=0.\]
 \begin{align*}&\sup_{(x,y)\in D_M(V)^2}\max(0,-\frac{8}{\pi\sqrt{N}}M^2-\frac{2(n+m+2
 +ch(r))}{\sqrt{N}}M^2+\\&\ \ \ \ \ \ \ \left|\psi_{r,n,m,P}(x,y)-2^{n+1}\mathscr{E}_{0,N,\infty}\left[\Gamma_r(2m_{2,\infty,P}\left(w_{m,m},2m_{\infty,2,P}(x,w_{n,n})-m_{\infty,1,P}(x,w_{n,n}))\right)\right.\right.\\&\ \ \ \ \ \ \ \left.\left.-m_{1,\infty,P}\left(w_{m,m},2m_{\infty,2,P}(x,w_{n,n})-m_{\infty,1,P}(x,w_{n,n}))\right),(2m_{\infty,2,P}(y,w_{n,n})-m_{\infty,1,P}(y,w_{n,n}))\right]\right|)=0 \end{align*}
 For $\alpha, 
 \Q\cap]0,1/2[,
 \epsilon= 1/2-\alpha,
 \delta= \min(\epsilon,\alpha) ,m,K,L,n\in\N^*,r\in \Q$ we have 
 \begin{align*}\sup_{(x,y)\in D_{m}^2(V)}\max(0&,\left|\E_{\alpha,P}^{\psi,n,m}(x,y)-\frac{1}{n^2}\frac{\cos(\alpha \pi)}{2\pi}\sum_{k=-n^3}^{n^3-1}e^{\alpha k/n^2}\psi_{k/n^2,n,m,P}(x,y)\right|\\&- \frac{4e^{-n\delta}m^2}{\pi\delta}-|(1-e^{1/n^2})|\frac{2(3+e^{\epsilon/n^2})m^2}{\pi\delta})=0 \end{align*}
 
\item[(44)] For $N=(N_1,...,N_n),N_i,M,k,m\in \N^*,\lambda=(\lambda_1,...,\lambda_n),\lambda_i\in \C$

\[\sup_{x\in D_m(U)}\max(0,|\mathscr{E}_{0,M,\infty,N_1}(x,1,1)-\sum_{i=1}^n\lambda_i\mathscr{E}_{0,M,\infty,N_i}(x,P(u),P(u))|-m\epsilon_N(\lambda) )=0,\]

\[\sup_{x\in D_m(U)}\max(0,|\mathscr{E}_{0,N_1,\infty,M}(1,1,x)-\sum_{i=1}^n\lambda_i\mathscr{E}_{0,N_i,\infty,M}(P(u),P(u),x)|-m\epsilon_N(\lambda) )=0,\]


\item[(45)]For $M=(M_1,...,M_n),M_i,m,p, n,L,N\in \N,\lambda=(\lambda_1,...,\lambda_n),2Q\in\Q\cap]0,|\log(\lambda_0)|[$, 
\[\sup_{\tiny\begin{array}{c} y\in D_p(V)\\x\in D_m(U)\end{array}}\max(0,|\sum_{i=1}^n\lambda_i\mathscr{E}_{0,M_i,\infty,L}(P(u^*),(2\Psi_{2Q}-\Psi_{Q})(P(y)),x)-\mathscr{E}_{0,\infty,L}(uEP(y),x)|- mp\epsilon_M(\lambda))=0\]
\[\sup_{\tiny\begin{array}{c} y\in D_p(V)\\x\in D_m(U)\end{array}}
\max(0,|\mathscr{E}_{0,L,\infty}(x,\theta EP(y))-\sum_{i=1}^n\lambda_i\mathscr{E}_{0,L,\infty,M_i}(x,uEP(y),P(u))|- mp\epsilon_M(\lambda))=0\]
\[\sup_{\tiny\begin{array}{c} y\in D_p(V)\\x\in D_m(U)\end{array}}
\max(0,|\sum_{i=1}^n\lambda_i\mathscr{E}_{0,M_i,\infty,L}(P(u),(2\Psi_{2Q}-\Psi_{Q})(P(y)),x)-\mathscr{E}_{0,\infty,L}(u^*EP(y),x)|- mp\epsilon_L(\lambda))=0\]
\[\sup_{\tiny\begin{array}{c} y\in D_p(V)\\x\in D_m(U)\end{array}}
\max(0,|\mathscr{E}_{0,L,\infty}(x,\overline{\theta} EP(y))-\sum_{i=1}^n\lambda_i\mathscr{E}_{0,L,\infty,M_i}(x,u^*EP(y),P(u))|- mp\epsilon_M(\lambda))=0\]


\item[(46)]
$2\Psi_{2M}(\Pi_{N,k}(y))-\Psi_M(\Pi_{N,k}(y))=E_{N,k}(y)$ for $M<|\log(\lambda_0)|/2$ \begin{align*}&\sup_{x\in D_m(U),y\in D_l(V)}\max(0,|\sum_{i=1}^n\lambda_i\mathscr{E}_{0,M_i,\infty,L}((P(u^k))^*,\Psi_N(P(y)),x)-\mathscr{E}_{0,M_i,\infty,L}(1,\Pi_{N,k}(y),x)|^2\\&-\sum_{i,j=1}^n\lambda_i\overline{\lambda_j}\varphi(m_{M_j,M_i}(P(u^k),P(u^k)^*)m^2l^2\\&+\sum_{j=1}^n\lambda_i\mathscr{E}_{0,M_j,\infty,L}(P(u^k)^*,P((u^k)^*),1)m^2l^2+\sum_{j=1}^n\overline{\lambda_j}\mathscr{E}_{0,L,\infty,M_j}(1,P(u^k),P((u^k)))m^2l^2-m^2l^2)=0\end{align*}

\begin{align*}&\sup_{x\in D_m(U),y\in D_l(V)}\max(0,|\sum_{i=1}^n\lambda_i\mathscr{E}_{0,M_i,\infty,L}((u^k),E_{N,k}(y),x)-\mathscr{E}_{0,M_i,\infty,L}(1,EU_{N,k}(y),x)|^2\\&-\sum_{i,j=1}^n\lambda_i\overline{\lambda_j}\varphi(m_{M_j,M_i}(P(u^k)^*,P(u^k))m^2l^2\\&+\sum_{j=1}^n\lambda_i\mathscr{E}_{0,M_j,\infty,L}(P(u^k),P((u^k)),1)m^2l^2+\sum_{j=1}^n\overline{\lambda_j}\mathscr{E}_{0,L,\infty,M_j}(1,P(u^k)^*,P((u^k)^*))m^2l^2-m^2l^2)=0\end{align*}
For $K=\lfloor\frac{N}{|\log(\lambda_0)|}\rfloor+1$
\[\Psi_N(P(x))=E_{N,0}(P(x))+\sum_{k=1}^K EU_{N,k}(x)+[EU_{N,k}(x^*)]^*\]
\item[(47)] For $2Q\in\Q\cap]0,|\log(\lambda_0)|[$ and with for short $P_{\leq n}(x)=\sum_{i,j=0}^nW_{i,i}xW_{j,j}$
 \begin{align*}&\sup_{x\in D_1(V)}\max(0,\sum_{j=0}^n2^j\varphi(P_{j,j}(\theta EP(P_{\leq m}(xx^*))))-\lambda_0\sum_{j=0}^m2^j\varphi(P_{j,j}[2\Psi_{2Q}(P(xx^*))-\Psi_{Q}(P(xx^*))])=0\end{align*}
\item[(48)]For any $K,N\in\N$, \[\varphi(E_{N,0}(P(x))+\sum_{k=1}^K EU_{N,k}(x)+[EU_{N,k}(x^*)]^*)=\varphi(E_{N,0}(P(x)).\]
\end{enumerate}
Stated in words, (39) expresses the various spectral properties of constants, spectral gap and traciality of $\varphi$ on the compression of the centralizer of $\psi$ by $w_{0,0}.$ (40) defines $u_t$ and expresses preservation of the centralizer by the actions. (41) , (43) define various spectral theory for $\psi$. (44) expresses the unitarity of $u$ based on the inequalities obtained in (42) using \eqref{wLmn}. (46) expresses the cross-product decomposition based on maps defined in (45). (47) states the relation $\psi(u.u^*)\leq \lambda_0\psi(.).$ (48) identifies $\varphi$ as the dual state on the cross product of the geometric state on the centralizer of $\psi$ which was already identified in (24) and (39). 

For a geometric state $\varphi$ on $N=N_0\otimes B(H)$ for $N_0$ finite, we call associated trace of $\varphi$ the trace : $\tau=\varphi|_{N_0}\otimes Tr$.
\begin{theorem}\label{AHIII0}
We will fix  $\lambda_0\in ]0,1[$ and $\Lambda=(\lambda_{j})_{ j\geq 0}$, increasing in $j$ and such that $\lim_{j\to\infty} \lambda_{j}=1.$

The class of nonuples $(C,X,H,J,\Pi_P,\phi,M,u,W)$ of a weak-* dense $C^*$ algebra $C$ of $X^*$, a von Neumann algebra in standard form $(X^*,H,J,Im(\Pi_P))$ with predual $X$ containing a state $\phi\in X$ and a von Neumann algebra $M\simeq eX^*e$ for $e$ the support projection of $\phi$ with $M=N\rtimes_\theta\Z$ for a von Neumann algebra $N$ of type $II_\infty$ 
with $\phi$ the dual weight of a geometric state on $N$ for the matrix unit $w=(w_{i,j}=P(W_{i,j}))\in N,W_{i,j}\in C$ satisfying \eqref{wLmn} and $\theta$ an automorphism of $N$ implemented by the image $U:=eue\in M$ of $u\in C$ decreasing the trace $\tau$ associated to $\phi|_N$ by a factor $\lambda_0<1$ fixed (i.e. $\tau(\theta(x))\leq \lambda_0 \tau(x)$) 
and such that the image $eue\in M(\sigma^{\hat{\tau}},]-\infty,\log(\lambda_0)])$ in the modular theory of the dual weight $\psi=\hat{\tau},$ is axiomatisable in the language above by the theory $T_{III_0}(\lambda_0,\Lambda)$ consisting of $T_{AHW^*}$, (24) and (39)-(48).
\end{theorem}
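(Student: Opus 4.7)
The plan is to check both directions parallel to the proof of Theorem \ref{AH}, but using the constant data $u, W_{i,j}, u_t$ to recover the crossed product decomposition.

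\textbf{Forward direction.} Assume we are given a crossed product $M = N\rtimes_\theta\Z$ as in the statement, sitting inside the Ando--Haagerup data. Everything needed for axioms of $T_{AHW^*}$ and (24) has been verified in section 3 and in the proof of Theorem \ref{MoreAxiom}(v) respectively. For the new axioms, the key facts are: (a) the geometric state $\varphi|_N$ makes $w_{0,0}Nw_{0,0}$ tracial and generates, together with $\theta$, the whole algebra; (b) $\psi=\hat\tau$ is a lacunary weight on $M$ with $\mathrm{Sp}(\sigma^\psi)\subset (\R\setminus (\log\lambda_0,-\log\lambda_0))\cup\{0\}$ (this follows from $\tau\circ\theta\leq\lambda_0\tau$ and standard dual weight computations, giving $eue\in M(\sigma^\psi,]-\infty,\log(\lambda_0)])$); (c) Connes cocycle $u_t=(D\varphi:D\psi)_t$ lives in $N$ and satisfies $\sigma_t^\varphi(x)=u_t\sigma_t^\psi(x)u_t^*$; (d) all the integral identities for $\Sigma_t,\Gamma_r,\Psi_{m,l}$ follow from lemmas \ref{NormG}, \ref{ContinuityModular}, \ref{EquationForms} applied to the covariant system $(M,\R,\sigma^\psi)$; (e) the two inequalities in (42) encode the defining property \eqref{wLmn} of the parameters $\lambda_j$ once one uses the modular-theoretic formulas for $\mathcal{E}_{P,1/2,-K}$ and $\mathcal{E}_\beta^{\psi,n,m}$; (f) axioms (44)--(46) are just discretized Fourier series for the spectral decomposition $\Psi_N(P(x))=\sum_k E_{M_\psi}(u^{-k}\Psi_N(P(x)))u^k$ corresponding to the $\Z$-action on $M/N$; (g) (47) is the scaling property $\psi\circ\mathrm{Ad}(u)\leq\lambda_0\psi$ restricted to finite projections of $N$; (h) (48) expresses that $\varphi$ is the canonical state whose restriction to $N$ is the chosen geometric state and which annihilates the off-diagonal Fourier components $E U_{N,k}(x)$ for $k\neq 0$.

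\textbf{Reverse direction.} Given a model, Theorem \ref{AH} already produces $(C,X,H,J,\Pi_P,\phi,M)$ with all the right structure. The new work is to recover $N$, the $\Z$-action and the crossed product decomposition. First, the axioms (39), (40), (41) and a variant of the proof of Theorem \ref{MoreAxiom}(v) guarantee that $N:=M_\psi$ (the centralizer of a faithful normal semifinite weight $\psi$ on $M$) is a well defined $II_\infty$ factor/algebra with $w=(w_{i,j})$ a matrix unit for a type $I_\infty$ tensor factor and with $\varphi|_N$ the geometric state; here $\psi$ is the weight whose modular group is encoded by $\Sigma_t$, and (39) ensures it is lacunary with spectrum in $(\R\setminus]\log\lambda_0,-\log\lambda_0[)\cup\{0\}$. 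Second, using axiom (42) and the explicit formulas (43) identifying $\mathcal{E}_\beta^{\psi,n,m}$ and $\mathcal{E}_{P,1/2}$ with their sesquilinear interpretations via $\Delta_\psi,\Delta_\varphi$, one deduces the norm equalities $\varphi(P(u)P(u^*))=\varphi(P(u^*)P(u))=1$ by letting the parameters $K,m,n\to\infty$ in (42) (since $\lambda_K,\lambda_m\to 1$). This makes $U:=eue$ an isometry and coisometry, hence a unitary in $M$. Third, axioms (44)--(45) give the formulas $u EP,u^*EP,\theta EP,\overline\theta EP$ corresponding to $\mathrm{Ad}(U)\circ E_N$ etc.; combined with (46) this is exactly the Fourier decomposition
\[\Psi_N(x)=E_N(x)+\sum_{k=1}^K U^{-k}E_N(U^kx)+ \text{adjoints}\]
on the spectral subspace with Fourier support in $[-N,N]$; letting $N\to\infty$ and using the density of the spectral subspaces proves $M$ is generated as a von Neumann algebra by $N$ and $U$, with the usual crossed product relations (the unitary $U$ being in $M(\sigma^\psi,\{\log\lambda_0\})$-like position follows from (39)). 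Fourth, (47) translates into $\tau(\theta(\cdot))\leq\lambda_0\tau(\cdot)$ on finite projections (using that $\Psi_{Q}$ is the identity on the centralizer for $Q<|\log\lambda_0|/2$). Finally, (48) states that $\varphi$ vanishes on the non-trivial Fourier components and therefore equals the dual state of $\varphi|_N$ under the crossed product identification, which is the structure we wanted. The functorial bijection on morphisms follows as in Theorem \ref{AH}.

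\textbf{Main obstacle.} The delicate step is the reverse-direction argument that $U=eue$ is \emph{unitary} from the mere inequalities in (42): the subtlety is that $u$ is required to live in the $C^*$-algebra $C$ (where unitarity is expressible) but $eue$ is only a contraction a priori; one must combine the sesquilinear form identities coming from (43) with the limiting behavior of $\lambda_K$ in (42) and with the cocycle derivative relations from (40) to conclude $\|U\xi_\varphi\|=\|U^*\xi_\varphi\|=1$ on a dense set of vectors, then upgrade to genuine unitarity via strong-$*$ convergence and the faithfulness of $\varphi$. A secondary technical point is matching the discretized Riemann sums in (41), (43) with the honest modular-theoretic integrals, which is routine given lemmas \ref{NormG} and \ref{ContinuityModular} but requires carefully tracking the Lipschitz constants involving $\log\lambda_0$ so that the universal bounds in the axioms are preserved under the ultraproduct one ultimately applies in corollary \ref{AHzero}.
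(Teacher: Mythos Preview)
Your overall strategy matches the paper's: verify the axioms hold in a genuine crossed product, and conversely reconstruct the crossed product from a model via Connes' discrete decomposition. Two points, however, are genuinely incomplete in your reverse direction.

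\textbf{Unitarity of $U=eue$.} You claim that from (42) one gets $\varphi(P(u)P(u)^*)=\varphi(P(u)^*P(u))=1$ and that this already makes $U$ a unitary. It does not: those equalities only say $\|U\xi_\varphi\|=\|U^*\xi_\varphi\|=1$, i.e.\ that $U$ is a contraction with full-norm image of the cyclic vector. The paper uses (42) only to ensure that the auxiliary quantity $\epsilon_N(\lambda)$ really equals $\|P(u)-\sum_i\lambda_iF_{N_i}^\varphi(P(u))\|_\varphi^\#$, so that one can choose convex combinations $u_n\to P(u)$ in $\|\cdot\|_\varphi^\#$. The actual unitarity then comes from axiom (44): taking the limit in (44) along such $u_n$ gives $\varphi(F_M(x^*)(1-P(u)P(u^*)))=0$ and $\varphi((1-P(u^*)P(u))F_M(x^*))=0$ for all $x,M$, whence $P(u)P(u^*)=P(u^*)P(u)=1$ by density and faithfulness of $\varphi$. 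Your ``Main obstacle'' paragraph gestures at a density argument but never invokes (44), which is the axiom doing the work.

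\textbf{The relative commutant condition.} To apply the crossed-product recognition (Connes' theorem in the form of \cite[Lemma 6.25]{HaagerupAndo}) one must verify $M_\psi'\cap M\subset M_\psi$. You do not mention this. The paper checks it by taking $x\in M_\psi'\cap M$, noting $\Psi_N(x)\in M_\psi'\cap M$ by $M_\psi$-bimodularity of $\Psi_N$, expanding $\Psi_N(x)$ via (46), and using that $p(\theta^k)=0$ for $k\neq 0$ (which follows from $\tau\circ\theta\leq\lambda_0\tau$ and \cite[Prop.~5.1.1]{ConnesThesis}) together with \cite[Rmq.~1.5.3(a)]{ConnesThesis} to kill the off-diagonal Fourier coefficients. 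Without this step, knowing that $N$ and $U$ generate $M$ is not enough to conclude $M\simeq N\rtimes_\theta\Z$.
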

By Connes' result \cite[Th 5.3.1]{ConnesThesis}, every type $III_0$-factor is of this type with  $\theta$ ergodic on the center of $N$ which is diffuse and conversely by his  \cite[Prop 5.1.1]{ConnesThesis}. We leave to the reader the identification of the category of axiomatization from structure preserving morphisms.

\begin{proof}
Let us start with such a nonuple and check this gives a model of $T_{III_0}(\lambda_0,\Lambda)$ with the interpretation suggested in the language description with $\psi=\hat{\tau}$. We thus define all the data in this way.
First note an equation we will use several times
\[||\sigma_t^\psi(x)-x||_\varphi^*\leq ||\sigma_t^\phi(x)-x||_\varphi^*+2||x||||u_t-1||_\varphi^\#\leq c|t|\ ||x||,\] with $c=6\ln(2)+2\leq 8$ since $(||u_t-1||_\varphi^\#)^2=\sum_{k=0}^\infty|2^{(k+1)it}-1|^22^{-k-1}\leq \sum_{k=0}^\infty\ln(2)^2(k+1)^2|t|^22^{-k-1}= C|t|^2, C=6\ln(2)^2$
Similarly, since $\sigma_t^\phi=\sigma_t^\psi(u_t^*.u_t)$, $\Gamma_r(w_{m,m}xw_{n,n})=(w_{m,m}\Gamma_r(x)w_{n,n}$ and  $w_{n,n}u_t=w_{n,n}2^{(n+1)it}$, \begin{align*}&|\langle[\sigma_t^\phi(\Gamma_r((w_{m,m}xw_{n,n}))-\Gamma_r((w_{m,m}xw_{n,n})].\xi_\psi,yw_{n,n}.\xi_\psi\rangle|\\&\leq |\langle[\sigma_t^\psi(\Gamma_r((w_{m,m}xw_{n,n}))-\Gamma_r((w_{m,m}xw_{n,n})].\xi_\psi,yw_{n,n}.\xi_\psi\rangle|\\&+|\langle[u_t^*\Gamma_r(w_{m,m}xw_{n,n})u_t-\Gamma_r(w_{m,m}xw_{n,n})].\xi_\psi,\sigma_{-t}^\psi(yw_{n,n}.\xi_\psi)\rangle|\\&\leq \sqrt{e^r\langle\frac{|\Delta_\psi^{it}-1|^2}{\Delta_\psi+\Delta_\psi^{-1}}(\Delta_\psi^2+1)(e^r+\Delta_\psi)^{-2}(w_{m,m}xw_{n,n})\xi_\psi,w_{m,m}xw_{n,n}\xi_\psi \rangle}\|yw_{n,n}.\xi_\psi\|\\& 
+|2^{(m+1)it}-1|||x||\ ||y||+|2^{(n+1)it}-1|||x||\ ||y||\\&\leq \sqrt{ (e^r+e^{-r})(2t)^2e^{-2}\|xw_{n,n}\xi_\psi\|^2}\|yw_{n,n}\xi_\psi\|
+(n+m+2)t\ln(2)\|x\|\|y\|
\\&\leq 2(n+m+2+ch(r))t\|x\|\|y\|
\end{align*}
The second inequality in (43) is a straightforward consequence, once one cuts the integral defining $F_N^\varphi$ from the modular group at $t=\pm1/\sqrt{N}$.

We already have a model of $T_{AHW^*}$ and (24) that expresses we have a matrix unit. The first part of (39) means that $M=M(\sigma^{\psi},]-\infty,\log(\lambda_0)]\cup \{0\}\cup [-\log(\lambda_0),\infty[)$. This is indeed the case since, by definition, $N$ is in the centralizer of $\psi$ and by assumption $U:=eue\in M(\sigma^{\hat{\tau}},]-\infty,\log(\lambda_0)])$ so that $NU^k\in M(\sigma^{\hat{\tau}},]-\infty,k\log(\lambda_0)])$ thus we see that any finite sum of terms in $N, NU^k$ and $(U^*)^kN$ is in $M(\sigma^{\psi},]-\infty,\log(\lambda_0)]\cup \{0\}\cup [-\log(\lambda_0),\infty[)$ and since those finite sums are dense, one obtains the equality with $M$ (we also see from the crossed-product decomposition that $N$ is exactly the centralizer for $\psi$). The modified (25) in (39) means (25) for the centralizer $N=M_\psi=N_0\otimes B(H)$ (especially $N_0$ in the centralizer for $\varphi$) and the end of (39) means the matrix unit in this centralizer and all are satisfied since $N$ is $II_\infty$.

The second part of (40) then means that $u_t$ is in the centralizer for both $\varphi,\psi$ the first part identifies its components on the matrix unit. The first part uses the notation of (25) and defines $P_{k,j}$ as expected 
Thus the first equation for $u_t$ means it is in $B(H)$ diagonal with the expected expression. (To see this is indeed the expected cocycle between the trace and the geometric state, cf. \cite[lemma VIII.2.10]{TakesakiBook}). The two last equations in (40) express the invariance of the centralizer $M_\psi$ by $u.u^*$ and $u^*.u$.
Then (41) means that $\Sigma_t$, the modular group for $\psi,$ is indeed related as it should to $\sigma_t$ using the cocycle, and $\Psi_{N,l}$ is indeed the corresponding modular map.

Note that conversely, (24) implies that $w_{i,j}$ is a matrix unit so that $M=M_0\otimes B(H)$ and then (40) implies $u_t$ is diagonal in $B(H)$. We can thus compute from $\varphi$ a (not necessarily unique) weight $\psi$ \cite[Th VIII.3.8]{TakesakiBook} with corresponding cocycle derivative and (41) implies we have the corresponding modular theory and Fejer map.
(39) then describes the centralizer as some $N_0\otimes B(H)$ and $\psi$ as lacunary, and the spectral condition on $P(u)$.

Let us come back to checking that our algebra gives a model. (42) is satisfied since $P(u)=eue$ is a unitary, based on \eqref{wLmn}, (43) defines $\Gamma_s$, $\psi_{r,n,m,P},\mathscr{E}_{\beta,P}^{\psi,n,m}$ as explained at the beginning of the proof or in a way similar to section \ref{Ocneanu} (for instance we use one more variant of (19)).

(44)-(45) are easy to check from the definitions that they express closely by duality. For instance, by Cauchy-Schwartz and unitarity of $P(u)$:
\begin{align*}|\varphi(F_N^\varphi(x)^*1F_M^\varphi(1)^*)&-\varphi(F_N^\varphi(x)^*P(u)F_M^\varphi(P(u))^*)|=|\varphi(F_N^\varphi(x)^*P(u)(P(u^*)-F_M^\varphi(P(u))^*))|\\&\leq \sqrt{\varphi(F_N^\varphi(x)^*P(u)P(u^*)F_N^\varphi(x))}||P(u)-F_M^\varphi(P(u))||_\varphi^\#.\end{align*}



The three first formulas in (46) are straightforward from the definitions and Cauchy Schwarz inequality. The last formula is the crucial one and follows from the proof of 
\cite[Proposition 6.23]{HaagerupAndo}. Let us explain it for the reader's convenience. First since \[p_n:=\bigvee_{k=0}^\infty(u^k(w_{0,0}+...+w_{n,n})u^{-k})\in M_\psi\] we can use $\sigma^\psi$ and thus $\Psi_N$ are $M_\psi$ bimodular :  $\Psi_N(p_nP(x)p_n)=p_n\Psi_N(P(x))p_n$. Now if $U=P(u),$ $Up_nUp_n=Up_n\theta(p_n)U=U\theta(p_n)U=U^2p_n$ since by definition $\theta(p_n)\leq p_n$ and similarly $[Up_n]^k=U^kp_n\in M(\sigma^{\hat{\tau}},]-\infty,k\log(\lambda_0)])$, $[p_nU^*]^k=p_n(U^*)^k\in M(\sigma^{\hat{\tau}},[-k\log(\lambda_0),\infty[)$ and thus $U^k\Psi_N(p_nP(x)p_n)\in M(\sigma^{\hat{\tau}},]-\infty,k\log(\lambda_0)+N])$

Thus for $k\geq K$, $k\log(\lambda_0)+N<0$ so that $E_{N,k}(p_nxp_n)=E_\psi(U^k\Psi_N(P(p_nxp_n)))=0$ and similarly $E_{N,k}(p_nx^*p_n)=0$ the crossed product expansion for $M_\psi\rtimes_\theta\Z$ thus gives \[\Psi_N(p_nxp_n)=E_{N,0}(p_nxp_n)+\sum_{k=1}^\infty UE_{N,k}(p_nxp_n)+[UE_{N,k}(p_nx^*p_n)]^*\] and is actually the expected finite sum stopping at index $K$. Since $\sigma_t^\psi$ is strongly continuous, so are the maps in the sum and $p_n\to 1$ so that taking the limit, we have thus finished checking (46).

Let us write $p_{n,0}=w_{0,0}+...+w_{n,n}.$
(47) is equivalent to \[\tau(p_{n,0}\theta (p_{m,0}E_{M_\psi}(P(xx^*))p_{m,0}))\leq \lambda_0\tau(p_{m,0}E_{M_\psi}(P(xx^*)))\]
which comes from $p_{n,0}\leq 1$ and $\tau\circ \theta\leq \lambda_0\tau$. (48) is then standard for a dual weight (see e.g. after polarization \cite[Th X.1.17 (i)]{TakesakiBook}).

Conversely, assume given a model. We already noted we have a lacunary weight $\psi$ with $II_\infty$ centralizer. We have to check  that $M$ is indeed a cross-product by using \cite[Th 5.3.1]{ConnesThesis} in the variant of \cite[Lemma 6.25]{HaagerupAndo}. We know that $E_{M_\psi}=2\Psi_{2N}-\Psi_{N}$ is a conditional expectation. (42) implies that $\varphi(P(u)P(u)^*)=1=\varphi(P(u)^*P(u))$. (The first using $\beta\to 0$, $n\to\infty$  and finally $m\to \infty$) This uses the spectral theory maps have the right interpretation from (41), (43).

As explained before, equation (42) guaranties $\epsilon_N(\lambda)=||P(u)-\sum_{i=1}^n\lambda_iF_{N_i}^\varphi(P(u))||_\varphi^\#$ and then from Hahn-Banach and the proof of theorem \ref{Ocneanu}, there is a net $u_n$ of convex conbinations of the form $\sum_{i=1}^{m_n}\lambda_iF_{N_i}^\varphi(P(u))$ such that $||P(u)-u_n||_\varphi^\#\to 0.$ This is the starting point to use (44) which then gives by taking a limit \[\varphi(F_M(x^*)(1-P(u)P(u^*)))=0,\ \ \ \ \ \varphi((1-P(u^*)P(u))F_M(x^*))=0\]  so that by density since $x,M$ are arbitrary, $1=P(u)P(u^*)=P(u^*)P(u)$ and thus $P(u)$ is unitary as expected.

 The last relations in (40) also implies that $P(u).P(u)^*,P(u)^*.P(u)$ leave stable $M_\psi$ in using the weak-* density of the image of $E_{M_\psi}P.$ This gives the automorphism $\theta(x)=P(u)xP(u)^*$ of $M_\psi.$ From the equivalent version of (47) above, and letting $n\to \infty,m\to \infty$, one gets $\tau(\theta (E_{M_\psi}(P(xx^*))\leq \lambda_0\tau(E_{M_\psi}(P(xx^*)))$ and we can replace by density (Kaplansky density theorem e.g. \cite[Th I.4.24]{TakesakiBook})  $E_{M_\psi}(P(xx^*))$ by any positive y in $M_\psi$. Then from \cite[Prop 5.1.1]{ConnesThesis}, one deduces $p(\theta^k)=0$ for $k\neq 0$ and of course $U=P(u)$ such that $\theta(x)=UxU^*.$ The last equation of (46) implies $\Psi_N(P(x))$ is in the algebra generated by $N,U$ so that by  density of image of $P$ one gets $\Psi_N(M)$ is in the von Neumann algebra generated by  $N,U.$
Taking $N\to \infty$ and using $||\sigma_t^\psi(x)-x||_\varphi^*\leq  c|t|\ ||x||,$ as already noted and thus $||\Psi_N(x)-x||_\varphi^*\to_{N\to \infty} 0$ for any $x\in M$ so that  one gets $M$ is generated by $N,U$. It thus only remains to check that $M_\psi'\cap M\subset M_\psi.$ But for $x\in M_\psi'\cap M$, we have $\Psi_N(x)\in M_\psi'\cap M$ (since $\Psi_N$ is $M_\psi$-bimodular). Moreover, it has a finite decomposition $\Psi_N(x)=x_0+\sum_{k=1}^Kx(k)U^k+x(-k)(U^*)^k$ from the uniqueness of the decomposition (formulas for $x(k)$), one deduces for any $a\in M_\psi$, $x(k)\theta^k(a)=ax(k),\theta^k(a)x(-k)=x(-k)a$ for $k>0$ and by \cite[Rmq 1.5.3 (a)]{ConnesThesis} one deduces from $p(\theta^k)=0$ for $k\neq 0$ that $x(k)=0$ for $k\neq 0.$ Thus $\Psi_N(x)\in M_\psi$ and taking $N\to \infty$ one deduces $x\in M_\psi$ as expected. This concludes the check of the assumptions of \cite[Lemma 6.25]{HaagerupAndo} and thus $M$ is indeed a cross product as expected.

It remains to check the data is uniquely determined as it should. 
(43) is similar to previous sections.

Similarly, the 4 equations with a $\lambda$ in (45) gives the definitions of $\theta EP,\overline{\theta} EP,u EP,u^* EP$.



Reasoning as before the three first relations in (46) define $\Pi_{N,k},E_{N,k},EU_{N,k}.$

Equation (48) finally characterizes the state $\varphi$ as determined from its restriction on $M_\psi$ as it should be for a dual weight and all the data is determined as expected. At this stage we can also choose $\psi=\hat{\tau}$ the dual weight of the determined trace and it has the expected modular theory.
\end{proof}

We can thus finish with the analogue for uncountable ultraproducts of \cite[Th 6.16]{HaagerupAndo}

\begin{corollary}\label{AHzero}
Let $M$ be a $\sigma$-finite factor of type $III_0$ with faithful normal states $\varphi$ and assume $\omega$ is a countably incomplete ultrafilter, then $(M,\varphi)^\omega$ is not a factor.
\end{corollary}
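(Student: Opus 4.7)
The plan is to apply the axiomatizability of Theorem~\ref{AHIII0} to transport the discrete decomposition of $M$ to its Ocneanu ultrapower, and then exploit the diffuseness of $Z(N)$ combined with countable incompleteness of $\omega$ through a Rokhlin construction to build a non-scalar central projection.

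First I would set up Connes' discrete decomposition: since $M$ is a $\sigma$-finite type $III_0$ factor, by \cite[Th 5.3.1]{ConnesThesis} we may write $M = N \rtimes_\theta \Z$ with $N$ of type $II_\infty$ with semifinite trace $\tau$, and $\theta \in \mathrm{Aut}(N)$ satisfying $\tau \circ \theta \le \lambda_0 \tau$ for some $\lambda_0 \in (0,1)$ and acting ergodically on the diffuse center $Z(N)$. After choosing a matrix unit $(w_{i,j})$ making $\varphi$ geometric on the $B(H)$-factor inside $N = N_0 \otimes B(H)$, lifting the implementing unitary $U$ to an element $u$ of a weak-$*$-dense $C^*$-subalgebra $C$ of $X^*$, and picking a sequence $\Lambda = (\lambda_j)$ satisfying~\eqref{wLmn}, the resulting nonuple $(C,X,H,J,\Pi_P,\phi,M,u,W)$ is a model of $T_{III_0}(\lambda_0,\Lambda)$.

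Next, by the axiomatizability asserted in Theorem~\ref{AHIII0}, the model-theoretic ultrapower of this nonuple is again a model of $T_{III_0}(\lambda_0,\Lambda)$, and its Ocneanu-theory component is precisely $(M,\varphi)^\omega$ by Theorem~\ref{OcneanuTh}. Hence $(M,\varphi)^\omega$ decomposes as $\tilde N \rtimes_{\tilde\theta} \Z$, where $\tilde N$ is the centralizer of the ultrapower weight $\psi^\omega$ and $\tilde\theta = \mathrm{Ad}((eue)^\omega)$. The identifications in the proof of Theorem~\ref{AHIII0} give a natural trace-preserving inclusion of the tracial ultrapower $(N,\tau)^\omega$ into $\tilde N$ under which $\theta^\omega$ corresponds to $\tilde\theta$. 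The center of the crossed product equals $Z(\tilde N)^{\tilde\theta}$, and this contains $Z(N)^\omega \cap \ker(\mathrm{id}-\theta^\omega)$, so it is enough to produce a non-scalar $\theta^\omega$-invariant projection in $Z(N)^\omega$.

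The main technical step is the Rokhlin construction. Since $\theta$ acts ergodically and freely on the diffuse abelian algebra $Z(N)$, after restricting (via the induced first-return transformation) to a corner $q Z(N) q$ of finite $\tau$-trace on which $\theta$'s induced transformation is probability-measure-preserving, Rokhlin's lemma yields, for each $n \ge 1$, pairwise orthogonal projections $q_0^{(n)},\ldots,q_{n-1}^{(n)} \in q Z(N) q$ with $\theta(q_k^{(n)}) = q_{k+1}^{(n)}$ for $0 \le k < n-1$ and $\tau(\sum_k q_k^{(n)})$ lying in a fixed compact sub-interval of $(0,\infty)$. Setting $p_n := \sum_{k=0}^{n-1} q_k^{(n)}$ gives $\|p_n - \theta(p_n)\|_{2,\tau} \le 2 \|q_0^{(n)}\|_{2,\tau} \to 0$ while $\tau(p_n)$ stays bounded away from $0$ and $\infty$. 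Using countable incompleteness, pick a decreasing sequence $I = I_0 \supset I_1 \supset \cdots$ in $\omega$ with $\bigcap_n I_n = \emptyset$, define $f(i) := \max\{n : i \in I_n\}$ so that $f(i) \to \infty$ along $\omega$, and set $p := [(p_{f(i)})_{i\in I}]_\omega \in Z(N)^\omega$. Then $p$ is a projection, $\theta^\omega$-invariant because $\|\theta^\omega(p) - p\| = \lim_{i \to \omega}\|\theta(p_{f(i)}) - p_{f(i)}\|_{2,\tau} = 0$, and non-scalar since $\tau^\omega(p)$ equals the ultralimit of $\tau(p_{f(i)})$, which stays in the chosen bounded interval. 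Thus $p$ is a non-scalar element of the center of $(M,\varphi)^\omega$, so the latter is not a factor.

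The main obstacle I expect is making the Rokhlin step fully rigorous in the semifinite, trace-scaling setting of $\theta$ on $Z(N)$, and, alongside it, justifying that the natural ultraproduct embedding $(N,\tau)^\omega \hookrightarrow \tilde N$ genuinely intertwines $\theta^\omega$ with $\tilde\theta$ so that fixed elements on the left remain fixed (and central) on the right; both points amount to carefully tracking the structural maps of the theory of Theorem~\ref{AHIII0} through the ultrapower, with the spectrum axioms~(39) controlling the implementing unitary and axioms~(45)--(46) ensuring that the crossed-product decomposition is preserved.
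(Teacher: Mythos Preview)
Your approach is essentially the same as the paper's: transfer the crossed-product decomposition to $(M,\varphi)^\omega$ via Theorem~\ref{AHIII0}, then use countable incompleteness together with a Rokhlin-type argument on the diffuse center $Z(N)$ to assemble a non-scalar $\theta^\omega$-fixed projection. The paper cites \cite[lemmas 6.19, 6.22]{HaagerupAndo} for the construction of the approximately invariant sets $B_n$ (with $\varphi(1_{B_n})=1/2$ and $\mu(TB_n\Delta B_n)\le 2/n$) rather than sketching the Rokhlin tower directly, and it works throughout with the probability measure on $Z(N)$ induced by the \emph{state} $\varphi$ rather than the semifinite trace $\tau$; this avoids the first-return/trace-scaling gymnastics you correctly flag as the main obstacle, and makes the non-triviality of the limiting projection immediate from $\varphi^\omega(p)=1/2$.
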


\begin{proof}Using the crossed product decomposition in \cite{ConnesThesis} one gets $M\simeq (N\otimes B(H))\rtimes \Z$
with $N$ finite and there is $\lambda_0,\Lambda$ so that $(M,M_*,\varphi,M,w,u)$ satisfies $T_{III}(\lambda_0,\Lambda)$ and thus so does $(M^\omega,(M_*)^\omega,\varphi^\omega,(M_n,\varphi)^\omega,w,u)$. Thus $(M,\varphi)^\omega)\simeq (N^\omega \otimes B(H))\rtimes \Z$ and the center $(\mathcal{Z}(N))^\omega\simeq \mathcal{Z}(N^\omega)$ by \cite[Corol 4.3]{FarahI} and it suffices to find a non trivial element in the fixed point algebra of the action of $\Z$, since such an element will be a non trivial element in the center of $(M,\varphi)^\omega.$ This follows from \cite[lemmas 6.19, 6.22]{HaagerupAndo}. Indeed, since $\omega$ is countably incomplete on $I$, there is a sequence $J_n\in \omega$, with $\cap_{n\in \N}J_n=\emptyset, J_0=I$ and we can assume $J_n$ decreasing.  Define the net $k_i$ by $k_i=n$ if $i\in J_{n-1}-J_n$ so that $\{i\in I, k_i\geq n\}=J_{n-1}\in \omega$ and thus $\lim_{n\to \omega} \frac{1}{k_n}=0$. Take $p=(1_{B_{k_n}})^\omega$ with $B_n$ built in their lemma 6.22, this of course gives as in the proof of their lemma 6.19, a central element in $N$ with $\varphi^\omega(p)=1/2$
since $\varphi(B_n)=1/2$ and $||upu^*-p||\to 0$ since $\lim_{n\to \omega} \mu(TB_{k_n}\Delta B_{k_n})\leq \lim_{n\to \omega}2/k_n=0$ for $\mathcal{Z}(N)\simeq L^\infty(\mu)$ with $u.u^*$ acting via $T$ on the measure space.\end{proof}

\section{Proof of theorem 1} We apply \cite[Th 5.6]{FarahII}. We can do this since our Ocneanu theory (theorem \ref{OcneanuTh}) and Groh theory (theorem \ref{GrohThPredual}) are axiomatized in a separable language (in section 1.4 we even wrote an explicit countable language). A separable von Neumann algebra is exactly a separable structure (separable unit ball for $||.||_\varphi^*$, which is the same as separable for the strong topology or the weak-* topology by a standard application of Banach-Saks theorem, or separable predual) in those theories. Combining (1) and (2) in the theorem, whether the theory is stable or not, if we assume (CH) the model theoretic ultrapowers are isomorphic and thus, by identification with Ocneanu and preduals of Groh ultraproducts in our quoted theorems, we deduce the first point.
 
 Assume now that the continuum hypothesis fails. 
Since we assumed $M$ is a factor not of type $III_0$, we treat each remaining type of factors for $M$ separately. Types $I_n,II_1$ are known from \cite[Th 4.7]{FarahI}.  Type $I_\infty$ is a consequence of the canonical isomorphism $(B(H))^\omega=B(H)$ (see \cite[section 6.1]{HaagerupAndo} or the proof of our corollary \ref{Qelim}).

\medskip
Let $M$ of type $II_\infty.$
 Consider on $M$ a geometric state $\varphi$ whose theory is described in theorem \ref{MoreAxiom} in the language with a matrix unit added. This state is lacunary thus there exists $N\in \Q$ such that the De la Vallée Poussin map (obtained from Fejer's map $F_N^\varphi$) $2F_{2N}^\varphi-F_N^\varphi$ is the conditional expectation on the centralizer as explained in the use of axiom (22) and is in the theory with language of $\sigma$-finite $W^*$-probability spaces (without matrix unit added). Consider the formula of this theory  \[f(x_1,y_1,x_2,y_2)=\varphi(M_{(3N,3N)}([\tilde{m}_{(N,N)}(x_1,y_2)-\tilde{m}_{(N,N)}(y_2,x_1)],[\tilde{m}_{(N,N)}(x_1,y_2)-\tilde{m}_{(N,N)}(y_2,x_1)]^*)),\]
with the notations of subsection 1.2 and recall the definition \eqref{tildem} in subsection 1.4 
Recall also that $m_{(N,M)}(x,y)=F^\varphi_N(x).F^\varphi_M(y)$ is one of the smeared product we have in our theory.
In the lacunary case it means \[f(x_1,y_1,x_2,y_2)=||[E_{M_\varphi}(x_1),E_{M_\varphi}(y_2)]||_2^2\]
computed with the $||.||_2$ norm of the centralizer $M_\varphi$. This formula is thus the formula witnessing the order property in \cite{FarahI} for the centralizer. We thus obtain that the theory of $M$ with this geometric state in the language of $\sigma$-finite algebras is unstable. For, we use \cite[Th 5.5]{FarahII}, it suffices to check it has the order property as witnessed by the formula above and this is the case since the centralizer (which is of the form $N\ot Z,$ $N\ II_1$ factor and $Z$ a commutative algebra)  contains unitally $M_{2^n}(\C)$ so that one can use \cite[lemma 3.2]{FarahI}.  Thus, by \cite[Th 5.6]{FarahII} (we use the implication mostly coming from \cite{FarahShelah}),  there exists $2^{\mathfrak{c}}$ ultrafilters with the model theoretic ultraproducts not isomorphic. Assume  that, for two such ultrafilters $\mathcal{U},\mathcal{V}$ we have $(M,\varphi)^\mathcal{U}\simeq(M,\varphi)^\mathcal{V}$ as von Neumann algebras (recall we can compute Ocneanu ultrapower with any state and this gives the same result). We want to count how many ultrafilters of that type there can be. Since the models are non-isomorphic if and only if the states are non-isomorphic, we want to count how many non-isomorphic geometric states there can be on a same ultraproduct von Neumann algebra of a geometric state. Since we took $\varphi$ to be a geometric state described in the language with a matrix unit added, one obtains $\varphi^\mathcal{U},\varphi^\mathcal{V}$ are geometric state with the same matrix unit coming from the one in $M$ and by a standard result if $e=w_{00}$ the first projection in the matrix unit $(M,\varphi)^\mathcal{U}\simeq e(M,\varphi)^\mathcal{U}e\otimes B(H)$. Note that if the isomorphism class of $e(M,\varphi)^\mathcal{U}e$ is determined, there is only one isomorphic class of geometric state on $(M,\varphi)^\mathcal{U}$ and thus the model as $\sigma$-finite $W^*$-probability space is determined (all the remaining part of the theory, (smeared) product, modular theory is determined by the state), and thus there is at most one $\mathcal{U}$ within the family fixed before.
Fix a trace $Tr_N$ on $N\simeq (M,\varphi)^\mathcal{U}\simeq(M,\varphi)^\mathcal{V}.$ 
It is well-known that the equivalence classes of finite projections $e$ is characterized by $Tr(e)\in [0,\infty)$ and thus if $e_\mathcal{U},e_\mathcal{V}$ are the images in the common algebra $N$ we thus have $e_\mathcal{U}Ne_\mathcal{U}\simeq e_\mathcal{V}Ne_\mathcal{V}$ if 
$Tr(e_\mathcal{U})=Tr(e_\mathcal{V}).$

As a consequence the isomorphism invariance classes as von Neumann algebras among the family of $2^\mathfrak{c}$ non-isomorphic models have at most $\mathfrak{c}$ members, so that there are again $2^\mathfrak{c}$ non-isomorphic ultrapowers as von Neumann algebras.

\medskip

Now assume $M$ is of type $III_1$, let us show that $Th(M)=Th(M^\omega)$ has the order property as witnessed by the formula \[f(x_1,y_1,x_2,y_2)=\varphi(M_{(2,2)}([M_{(0,0)}(x_1,y_2)-M_{(0,0)}(y_2,x_1)],[M_{(0,0)}(x_1,y_2)-M_{(0,0)}(y_2,x_1)]^*)).\]
For, one uses from \cite[Th 4.20,Prop 4.24]{HaagerupAndo} according to which the centralizer $(M^\omega)_{\varphi^\omega}$ is a type $II_1$ factor. If $x_i,y_i$ are in the centralizer, the formula is interpreted by :
\[f(x_1,y_1,x_2,y_2)=||x_1y_2-y_2x_1||_2^2\]
 and this is again the formula that witnesses the order property in the type $II_1$ case by \cite[lemma 3.2]{FarahI} since such an algebra contains unitally $M_{2^n}(\C).$ Thus one can take the sequence for the order property for $M$ in the centralizer of $M^\omega.$ Again, from \cite[Th 5.5,5.6]{FarahII}, one gets two ultrafilters with $(M,\varphi)^\mathcal{U}\not\simeq(M,\varphi)^\mathcal{V}$ as models in the language of $\sigma$-finite von Neumann algebras. But if we had an isomorphism as $III_1$ factors, since those ultraproducts are strictly homogeneous by  \cite[Th 4.20]{HaagerupAndo} again, the two states $\varphi^\mathcal{U},\varphi^\mathcal{V}$ would be unitarily conjugated, establishing the isomorphism as models in the language of $\sigma$-finite $W^*$ probability spaces, a contradiction.
 
\medskip 
 
 Consider finally the case where $M$ is of type $III_\lambda,$ $0<\lambda<1$.
One uses \cite[Th 4.3.2]{ConnesThesis} and its proof. One can fix $\varphi$ a periodic state (a faithful normal state of period $T_0=2\pi/\log(\lambda)$ from the computation of the invariant $T$ and its alternative definition in his remark 1.3.3 in the $\sigma$-finite case). Then $M\simeq M\otimes B(H)$ and $\varphi\otimes Tr$ is Connes' construction of a generalized trace on $M$. Since $\varphi$ is lacunary and $M_\varphi$ is a $II_1$ factor (by \cite[Th 4.2.6]{ConnesThesis}) it is a finite factor and its tensor product with $B(H)$ is a $II_\infty$ factor from his corollary 4.3.3). We will use that the same formula as for the $II_\infty$ case gives the order property for the theory of $(M,\varphi)$ as a $\sigma$-finite $W^*$ probability space. But we won't only use \cite[Th 5.6]{FarahII} to get $2^\mathfrak{c}$ non-isomorphic models  $(M,\varphi)^\mathcal{U}$, we will rather deduce a huge non-isomorphic class from the $II_\infty$ case. 

Consider first two ultrafilters with $(M,\varphi)^\mathcal{U}\simeq (M,\varphi)^\mathcal{V}$ as von Neumann algebras. We know from the axiomatization result theorem \ref{MoreAxiom} that they are $III_\lambda$ factors with a periodic state. Thus as above, on $(M,\varphi)^\mathcal{U}\otimes B(H)$, $\varphi^\mathcal{U}\otimes Tr$ is a generalized trace, thus from \cite[Th 4.3.2]{ConnesThesis}, it would be proportional to a unitary conjugate of $\varphi^\mathcal{V}\otimes Tr.$ Especially, both would have unitarily conjugated  centralizers $((M,\varphi)^\mathcal{U})_{\varphi^\mathcal{U}}\otimes B(H)\simeq ((M,\varphi)^\mathcal{V})_{\varphi^\mathcal{V}}\otimes B(H).$
By the lacunarity again, the ultraproduct of centralizers is nothing but the centralizer of the ultraproduct \cite[section 4.3]{HaagerupAndo} and thus
\[(M_\varphi\otimes B(H))^\mathcal{U}\simeq (M_\varphi)^\mathcal{U}\otimes B(H)\simeq (M_\varphi)^\mathcal{V}\otimes B(H)\simeq (M_\varphi\otimes B(H))^\mathcal{V}.\]

From the $II_\infty$ factor case above, we deduce we have $2^\mathfrak{c}$ non-isomorphic von Neumann algebras $(M_\varphi\otimes B(H))^\mathcal{U}$ and thus $2^\mathfrak{c}$ non-isomorphic von Neumann algebras $(M,\varphi)^\mathcal{U}$.

This completes the proof of Theorem 1. 
 
We conclude by a consequence in the spirit of  \cite[Th 6.11]{HaagerupAndo}
 
\begin{corollary}
Let $M$ be a  $III_\lambda$ factor with separable predual and $0<\lambda\leq 1$ and assume the continuum hypothesis fails. Then there are two ultrafilters $\mathcal{U},\mathcal{V}$ on $\N$ such that $\prod^\mathcal{U} M\not\simeq \prod^\mathcal{V} M$ as von Neumann algebras.
 \end{corollary}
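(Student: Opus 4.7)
The plan is to reduce this to part (2) of Theorem~\ref{stability} (applied to $M$) via the identification of the Ocneanu ultrapower as a corner of the Groh ultrapower, using the fact that for a type $III$ factor, compression by any non-zero projection returns an isomorphic von Neumann algebra.

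First, since $M$ is of type $III_\lambda$ with $0<\lambda\leq 1$, it is neither of type $I$ nor of type $III_0$, so Theorem~\ref{stability}(2) produces non-principal ultrafilters $\mathcal{U},\mathcal{V}$ on $\N$ such that the Ocneanu ultrapowers $(M,\varphi)^\mathcal{U}$ and $(M,\varphi)^\mathcal{V}$ are non-isomorphic as von Neumann algebras (for any faithful normal state $\varphi$, which exists by $\sigma$-finiteness of the separable predual).

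Next, I recall from the Ando--Haagerup framework (e.g.\ the corner description used in Theorem~\ref{AH} and exploited in Corollary~\ref{AHlambda}) that inside the Groh ultrapower $\prod^{\mathcal{U}}M$, which is canonically identified with $((M_*)^{\mathcal{U}})^*$, one has the relation
\[(M,\varphi)^\mathcal{U}\;\simeq\; e_{\mathcal{U}}\,\Bigl(\prod\nolimits^{\mathcal{U}}M\Bigr)\,e_{\mathcal{U}},\]
where $e_{\mathcal{U}}$ is the support projection of the ultrapower state $\varphi^\mathcal{U}\in (M_*)^{\mathcal{U}}$, and similarly for $\mathcal{V}$. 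Moreover, by Corollary~\ref{AHlambda} applied at the Groh level (the Ocneanu ultrapower being a $III_\lambda$-corner of the Groh ultrapower, which is itself a model of $T_{C^*,(III_\lambda)_*}$ by Theorem~\ref{GrohTh}), both $\prod^{\mathcal{U}}M$ and $\prod^{\mathcal{V}}M$ are $III_\lambda$ factors.

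The key (and essentially only) additional ingredient is the standard fact that in any type $III$ factor all non-zero projections are Murray--von Neumann equivalent, hence the compression by any non-zero projection is spatially isomorphic to the whole algebra. Applied to $e_{\mathcal{U}}$ and $e_{\mathcal{V}}$ inside the $III_\lambda$ factors $\prod^{\mathcal{U}}M$ and $\prod^{\mathcal{V}}M$, this gives
\[(M,\varphi)^{\mathcal{U}}\simeq \prod\nolimits^{\mathcal{U}}M,\qquad (M,\varphi)^{\mathcal{V}}\simeq \prod\nolimits^{\mathcal{V}}M\]
as von Neumann algebras. An isomorphism $\prod^{\mathcal{U}}M\simeq \prod^{\mathcal{V}}M$ would then translate into a von Neumann algebraic isomorphism $(M,\varphi)^\mathcal{U}\simeq(M,\varphi)^\mathcal{V}$, contradicting the conclusion extracted from Theorem~\ref{stability}(2). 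Thus $\prod^{\mathcal{U}}M\not\simeq \prod^{\mathcal{V}}M$, which is the desired statement. No step looks genuinely hard here: the only point requiring care is ensuring the corner identification is available for the specific Groh ultrapower (not just Groh--Raynaud), which is built into Theorem~\ref{AH}; everything else reduces to the already proved Theorem~\ref{stability}(2).
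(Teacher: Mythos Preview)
There is a genuine gap. The assertion that ``in any type~$III$ factor all non-zero projections are Murray--von Neumann equivalent'' is only valid in the $\sigma$-finite case. In a non-$\sigma$-finite type~$III$ factor $N$, a $\sigma$-finite projection $p$ (e.g.\ the support of any normal state) can never be equivalent to $1$: if $p\sim 1$ via $v^*v=1$, $vv^*=p$, then any faithful normal state on $pNp$ transports to one on $N$, contradicting non-$\sigma$-finiteness. And the Groh ultrapower $\prod^{\mathcal{U}}M$ is \emph{not} $\sigma$-finite when $M$ is infinite-dimensional: every normal state on it lies in the predual $(M_*)^{\mathcal{U}}$, hence is of the form $(\psi_n)^{\mathcal{U}}$; choosing non-zero projections $p_n\in M$ with $\psi_n(p_n)<1/n$ (possible since $M$ is diffuse) yields a non-zero positive element of $\prod^{\mathcal{U}}M$ on which this state vanishes. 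Consequently $e_{\mathcal{U}}\not\sim 1$, and your claimed isomorphism $(M,\varphi)^{\mathcal{U}}\simeq \prod^{\mathcal{U}}M$ is actually false---the left side is $\sigma$-finite, the right side is not.

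The repair is exactly the paper's argument, which you are very close to: do not compare $e_{\mathcal{U}}$ with $1$, but rather assume an isomorphism $\Phi:\prod^{\mathcal{U}}M\to\prod^{\mathcal{V}}M$ and compare the two \emph{$\sigma$-finite} projections $\Phi(e_{\mathcal{U}})$ and $e_{\mathcal{V}}$ inside the type~$III$ factor $\prod^{\mathcal{V}}M$. The correct statement (Takesaki, Prop.~V.1.39) is that any two non-zero $\sigma$-finite projections in a type~$III$ factor are equivalent; this gives $\Phi(e_{\mathcal{U}})\sim e_{\mathcal{V}}$, hence
\[
(M,\varphi)^{\mathcal{U}}\simeq e_{\mathcal{U}}\Bigl(\prod\nolimits^{\mathcal{U}}M\Bigr)e_{\mathcal{U}}\simeq e_{\mathcal{V}}\Bigl(\prod\nolimits^{\mathcal{V}}M\Bigr)e_{\mathcal{V}}\simeq (M,\varphi)^{\mathcal{V}},
\]
contradicting Theorem~\ref{stability}(2). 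Note also that you do not need the Groh ultrapowers to be $III_\lambda$ factors; it is enough that they are type~$III$ factors, so that the comparison of $\sigma$-finite projections applies.
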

 \begin{proof}
 It suffices to take the same ultrafilters as in theorem 1. If the isomorphism $\prod^\mathcal{U} M\simeq \prod^\mathcal{V} M$ were true, since any $\sigma$-finite projection of a type $III$-factor are equivalent (see e.g. \cite[Prop V.1.39]{TakesakiBook}), one would deduce from \cite[Corol 3.28]{HaagerupAndo} that for some $\sigma$-finite (support) projections $q,p$: $(M,\varphi)^\mathcal{U}\simeq q(\prod^\mathcal{U} M)q\simeq p(\prod^\mathcal{V} M)p\simeq (M,\varphi)^\mathcal{V}.$
 This would contradict theorem 1.
  \end{proof}




\end{document}